\documentclass{article}
\usepackage{graphicx} 
\usepackage[a4paper, total={6.5in, 9.5in}]{geometry}
\usepackage{amsmath,amsthm,amssymb}
\usepackage{mathtools}
\usepackage[dvipsnames,table]{xcolor} 
\usepackage{setspace}
\usepackage{rotating} 
\usepackage{pdflscape} 
\usepackage[para,online,flushleft]{threeparttable}
 \usepackage{booktabs}
 \usepackage{arydshln}
 \usepackage{natbib}
 \usepackage{subcaption}  
\usepackage{algpseudocode} 
\usepackage{authblk}
\usepackage{url}
\usepackage{pifont}
\usepackage{amsfonts}
\usepackage{comment}
\usepackage{algorithm2e}
\usepackage{todonotes}

\usepackage{hyperref}
\hypersetup{
    colorlinks=true,
    citecolor=blue,
    linkcolor=blue
    }

\newtheorem{definition}{Definition}

\newtheorem{theorem}{Theorem}[section]
\newtheorem{corollary}[theorem]{Corollary}
\newtheorem{lemma}[theorem]{Lemma}

\usepackage{cleveref}

\crefname{observation}{observation}{observations}
\Crefname{observation}{Observation}{Observations}

\usepackage{tikz}
\usetikzlibrary{backgrounds}
\usetikzlibrary{calc}
\usetikzlibrary {arrows.meta}
\usetikzlibrary{decorations.pathmorphing}
\usetikzlibrary{fit,tikzmark}
\usetikzlibrary{positioning}

\definecolor{color1}{RGB}{228,26,28}    
\definecolor{color2}{RGB}{55,126,184}   
\definecolor{color3}{RGB}{77,175,74}    
\definecolor{color4}{RGB}{152,78,163}   
\definecolor{color5}{RGB}{255,127,0}    
\definecolor{color6}{RGB}{255,105,180} 
\definecolor{color7}{RGB}{166,86,40}    

\tikzset{
  edge1/.style={color1,thick,solid},
  edge2/.style={color2,thick,solid},
  edge3/.style={color3,thick,solid},
  edge4/.style={color4,thick,solid},
  edge5/.style={color5,thick,solid},
  edge6/.style={color6,thick,solid},
  edge7/.style={color7,thick,solid},
  fictive_edge/.style={thick, dotted},
  edge1blurred/.style={color1,thick,solid,opacity=0.3},
  edge2blurred/.style={color2,thick,solid,opacity=0.3},
  edge3blurred/.style={color3,thick,solid,opacity=0.3},
  edge4blurred/.style={color4,thick,solid,opacity=0.3},
  edge5blurred/.style={color5,thick,solid,opacity=0.3},
  edge6blurred/.style={color6,thick,solid,opacity=0.3},
  edge7blurred/.style={color7,thick,solid,opacity=0.3},
}

\tikzset{
  default/.style={circle, draw, inner sep=1pt, minimum size=5mm, font=\small},
  node1/.style={circle, draw, inner sep=1pt, minimum size=5mm, font=\small, fill=color1!40},
  node2/.style={circle, draw, inner sep=1pt, minimum size=5mm, font=\small, fill=color2!40},
  node3/.style={circle, draw, inner sep=1pt, minimum size=5mm, font=\small, fill=color3!40},
  node4/.style={circle, draw, inner sep=1pt, minimum size=5mm, font=\small, fill=color4!40},
  node5/.style={circle, draw, inner sep=1pt, minimum size=5mm, font=\small, fill=color5!40},
  node6/.style={circle, draw, inner sep=1pt, minimum size=5mm, font=\small, fill=color6!40},
}

\usepackage{tabularx,environ}
\makeatletter
\newcommand{\problemtitle}[1]{\gdef\@problemtitle{#1}}
\newcommand{\probleminput}[1]{\gdef\@probleminput{#1}}
\newcommand{\problemquestion}[1]{\gdef\@problemquestion{#1}}
\NewEnviron{problem}{
  \problemtitle{}\probleminput{}\problemquestion{}
  \BODY
  \par\addvspace{.5\baselineskip}
  \noindent
  \begin{tabularx}{\textwidth}{@{\hspace{\parindent}} l X c}
    \multicolumn{2}{@{\hspace{\parindent}}l}{\@problemtitle} \\
    \textbf{Input:} & \@probleminput \\
    \textbf{Question:} & \@problemquestion
  \end{tabularx}
  \par\addvspace{.5\baselineskip}
}
\makeatother

\makeatletter
\NewEnviron{optimproblem}{
  \problemtitle{}\probleminput{}\problemquestion{}
 \BODY
  \par\addvspace{.5\baselineskip}
  \noindent
  \begin{tabularx}{\textwidth}{@{\hspace{\parindent}} l X c}
    \multicolumn{2}{@{\hspace{\parindent}}l}{\@problemtitle} \\
    \textbf{Input:} & \@probleminput \\
    \textbf{Output:} & \@problemquestion
  \end{tabularx}
  \par\addvspace{.5\baselineskip}
}
\makeatother

\newenvironment{keywords}{
    \vspace{1ex} 
    \noindent\textbf{Keywords:} 
}{\par\vspace{1ex}} 

\RequirePackage{enumitem} 
\newlist{paraenum}{enumerate*}{1} 
\setlist[paraenum]{label=(\emph{\roman*})}  

\title{Novel neighborhood structures for incomplete round robin sports tournaments}
\author{Karel Devriesere, David Van Bulck, Dries Goossens}
\date{}

\begin{document}
\maketitle

\begin{abstract}
    The incomplete round robin sports tournament format, where each team plays the same number of games but faces only a subset of the other teams, is becoming increasingly popular in both youth and professional competitions. In contrast to conventional round robin tournaments, however, neighborhood structures for scheduling incomplete round robin tournaments have largely remained unexplored. We fill this gap by proposing two novel neighborhood structures and describe them in graph theory terms. One of them introduces a single new game followed by a minimal repair chain, while the other introduces possibly many new games but only affects a single round. The latter is shown to fully connect the solution space. We embed the neighborhoods in an adaptive late acceptance hill climbing algorithm and show that the proposed algorithm obtains high quality and new best solutions for several sets of instances from the literature, thereby empirically confirming the effectiveness of the proposed neighborhoods.
\end{abstract}

\keywords{Round robin, Sports scheduling, Graph theory, Neighborhood structures, Solution space connectivity}

\section{Introduction}\label{sec:introduction}

The vast majority of sports timetabling problems encountered in practice have the following structure. Given is a set of teams $T=\{1,\dots,n\}$, with $n$ even, and a set of rounds $R=\{1,\dots,r\}$, such that each team has to play precisely one game in each round. Matches take place at the venue of one of the two opposing teams: the team hosting the match is the home team while the visiting team is the away team. 
A timetable designates the matches to be played in each round as well as the home team of each match \citep{nemhauser1998scheduling, henz2001scheduling}. The quality of a timetable is measured with a context-dependent objective function. For example, a common objective is to minimize the total distance that teams have to travel during the tournament \citep{ribeiro2012sports, van2020robinx}.

In this work, we will focus on incomplete round robin (iRR) tournaments, which recently have gained a lot of popularity. Structurally, an iRR tournament is similar to a single round robin tournament (RR), except that in iRR tournaments the set of available rounds is smaller than $n-1$. Hence, while in a (single) RR tournament every teams faces every opponent exactly once, in an iRR tournaments each teams faces only a subset of their possible opponents. The difference between a RR and iRR timetable is illustrated in \Cref{tab:example1}. Note that a RR timetable can always be transformed to a iRR timetable by arbitrarily taking a subset of $r$ rounds, but that the opposite is not true. For example, in \Cref{tab:example1_iRR}, teams 1, 5 and 7 face all opponents in rounds 1-5 except for each other. As it is impossible to schedule the matches $\{1,7\},\{1,5\},\{5,7\}$ in only two rounds, this set of rounds can never occur as a subset of the rounds in a RR tournament (see e.g. \citet{alexander1982premature}, \citet{schmand2022greedy}). 

\begin{table}[h!]
\centering
\caption{Difference between RR and iRR timetables with $n=8$ teams}
\label{tab:example1}
\begin{subtable}[t]{0.45\textwidth}
    \centering
\begin{tabular}{ccccccc} 
    \toprule
    R1 & R2 & R3 & R4 & R5 & R6 & R7 \\\midrule 
    3-1 & 3-2 & 7-3 & 4-3 & 3-6 & 8-3 & 3-5 \\
    7-2 & 1-4 & 2-5 & 2-1 & 2-8 & 2-4 & 6-2 \\
    8-6 & 5-8 & 1-8 & 8-7 & 5-1 & 1-6 & 7-1 \\
    4-5 & 6-7 & 6-4 & 5-6 & 7-4 & 5-7 & 4-8 \\ 
    \bottomrule
  \end{tabular}
\caption{RR timetable}
\label{tab:example1_RR}
\end{subtable}
\hspace{0.45cm}
\begin{subtable}[t]{0.45\textwidth}
\centering
\begin{threeparttable}
\begin{tabular}{ccccc} 
    \toprule
    R1 & R2 & R3 & R4 & R5 \\\midrule 
    3-1 & 4-3 & 3-2 & 7-3 & 1-6 \\
    7-2 & 2-1 & 1-4 & 2-5 & 2-8 \\
    8-6 & 8-7 & 5-8 & 1-8 & 5-3 \\
    4-5 & 5-6 & 6-7 & 6-4 & 7-4 \\ 
    \bottomrule
  \end{tabular}
\end{threeparttable}
\caption{iRR timetable with $r=5$}
\label{tab:example1_iRR}
\end{subtable}
\end{table}

It is shown by \citet{li2025beyond} and \citet{devriesere2025redesigning} that the incomplete RR format is particularly attractive for scheduling youth sports competitions. Moreover, \citet{li2025beyond} develop a metaheuristic to quickly find high quality iRR timetables. The UEFA Champions League, which is the most prestigious European club football tournament, has recently changed its format to an iRR tournament, which has further sparked the interest of the academic community (see e.g., \citet{melkonian2024integer,csato2025does,devriesere2025evaluating,Guyon2025}).


One of the most popular methods to obtain timetables is the use of local search heuristics, where the heuristic starts from a candidate solution and iteratively explores neighboring solutions based on predefined neighborhood structures. Several neighborhood structures are abundantly used to schedule sports tournaments. Although their nomenclature differs, they are commonly known as \emph{Cycle reversal} \citep{knust2006balanced}, \emph{Round Swap}, \emph{Partial Round Swap}, \emph{Team Swap}, and \emph{Partial Team Swap} \citep{ribeiro2023combinatorial}.
They have been proven successful for scheduling round robin tournaments in numerous applications (see e.g., \cite{anagnostopoulos2006simulated,gaspero2007composite,ribeiro2007heuristics,guedes2011heuristic,costa2012ils,rosati2022multi}). 
Apart from computational results, their theoretical properties and characteristics, especially with regards to connectivity, have been a central topic in the sports scheduling literature for the last few years. Informally, a neighborhood structure is called connected if, starting from a candidate timetable, any other timetable can be reached by applying a finite sequence of moves in the neighborhood. In particular, it is known that the neighborhood structures Team Swap, Partial Team Swap, Round Swap and Partial Round Swap are not connected for RR tournaments \citep{januario2016sports, urrutia2021recoloring,kashiwagi2025initial}. However, neighborhood structures tailored for iRR tournaments have remained largely unexplored. This paper aims to fill this gap. 

First, we show that if we start from a given timetable and apply RR neighborhoods, we never encounter timetables that are non-isomorphic to the initial timetable. In other words, we show that existing RR neighborhood structures are not connected for iRR tournaments. Therefore, we propose two novel neighborhoods, which we call \emph{incomplete Partial Team Swap} (\Cref{sec:i-PTS}) and \emph{incomplete Partial Round Swap} (\Cref{sec:i-PRS}). While existing neighborhoods fail to introduce new games (up to a permutation of the teams), our proposed neighborhoods are able to obtain structurally different timetables. 
More precisely, incomplete Partial Team Swap introduces a single new game followed by a minimal repair chain, while incomplete Partial Round Swap introduces possibly many new games but only affects a single round.
In fact, we show that the latter connects the solution space for certain values of $r$. This is a remarkable result, given that no connected neighborhood is known for RR tournaments. Besides a theoretical analysis of the proposed neighborhoods, we also empirically investigate their effectiveness. Using two sets of instances from the literature, we show that the proposed neighborhood structures are able to find high quality timetables when embedded in a metaheuristic framework. In particular, for several instances we find new best solutions.

The rest of the paper is organized as follows. \Cref{sec:Notation} formally describes the iRR timetabling problem. Although specific objectives and constraints differ over different problems, every iRR timetabling problem has a common set of requirements that needs to be satisfied. Next, it describes how this problem can be viewed as a graph-theoretical problem and introduces the necessary notation used in the remainder of this paper. In \Cref{sec:Neighborhoods}, we first discuss how existing neighborhood structures apply to iRR tournaments, before presenting two novel neighborhood structures. All neighborhood structures will be described in graph-theoretical terms. Next, in \Cref{sec:Connectivity} we present theoretical results concerning the connectivity of neighborhood structures, while in \Cref{sec:Experimental_results} we empirically asses the performance of the neighborhoods. Finally, \Cref{sec:Conclusion} concludes.

\section{Problem description and notation}\label{sec:Notation}

We begin this section by formally defining the iRR timetabling problem. We then show that this problem can be viewed as a graph problem. Next, we make a distinction between different types of cycles in the constructed graph, which play an important role in the remainder of this paper. We end this section with introducing definitions of different types of solution space connectivity.

\subsection{The incomplete round robin timetabling problem}

With the ordered tuple $(i,j)$, we refer to the match between $i$ and $j$ where $i$ is the home and $j$ is the away team. In the remainder of this paper, if the home team of a match is irrelevant or not yet decided, we refer to the match between $i$ and $j$ as the unordered pair $\{i,j\}$.

An iRR timetable specifies which matches are played in which rounds. More formally, an iRR timetable can be defined by the mappings $o: T \times R \rightarrow T$ and $h: T \times R \rightarrow \{0,1\}$, such that $o(i,s)$ gives the opponent of team $i$ in round $s$ and $h(i,s)$ is 1 if $i$ plays home in round $s$ and is 0 otherwise. We refer to $h(i,s)$ as the home-away status of team $i$ in round $s$. Hence, in an iRR timetable, for each pair of distinct teams $i$ and $j$ and each round $s$, it must be the case that if $o(i,s)=j$ then $o(j,s)=i$ and $h(i,s) \neq h(j,s)$.

In most practical settings, a timetable for a sports tournament must adhere to a myriad of constraints. For an overview of the most common requirements, we refer to \citet{van2020robinx}. However, at its core, the iRR timetabling problem can be defined as follows:

\begin{description}[align=left]
    \item[\textbf{Incomplete Round Robin Timetabling Problem}]
    \item[\textbf{Input:}] A set of $n$ teams $T$, with $n$ even, and a set of $r$ rounds $R$, with $r < n-1$.
    \item[\textbf{Output:}] An iRR timetable minimizing a context-depended objective function such that:
    \begin{description}
        \item[\textbf{C1}] Each team faces every opponent at most once,
        \item[\textbf{C2}] Each team has exactly one game in each round.
        \item[\textbf{C3}] Each team has at least $\lfloor \frac{r}{2} \rfloor$ and at most $\lceil \frac{r}{2} \rceil$ home games.
    \end{description}
\end{description}

An iRR timetable satisfying \textbf{C3} is called balanced and is called feasible if it satisfies \textbf{C1}-\textbf{C3}. Note that we do not specify an objective function since this depends on the application that is considered. 

\subsection{iRR timetabling as a graph problem}

It is well known that graph theory is a useful tool for modeling sports timetabling problems \citep{Drexl2007, Januario2016}. Indeed, if we let $V$ be a set of vertices such that $V = T$ and $E$ be a set of edges such that $E = \{\{i,j\}: i,j \in T: i < j\}$, then the graph $G(V,E)$ is equal to the complete graph $K_n$ and represents the set of all possible matches between teams in an iRR tournament. A perfect matching or one-factor of a graph $G$ is a set of edges $F \subseteq E$ such that each vertex is incident to precisely one edge in $F$, and hence each vertex is adjacent to one other vertex. A one-factorization of an $r$-regular graph $G(V,E)$ is a partition $\mathcal{F}$ of $E$ into disjoint perfect matchings $\mathcal{F} = \{F_1, F_2, \dots, F_{r}\}$. Since in an iRR timetable every team plays exactly once in each round (\textbf{C1}), an assignment of matches to a round corresponds to a perfect matching in $G$. Since there are $r$ rounds, and because a team can face an opponent at most once over the rounds (\textbf{C2}), the perfect matchings need to be edge disjoint, i.e.\ the set of all matches included in the timetable is an $r$-regular graph, and a one-factorization of this graph designates in which rounds which matches are played. The problem of finding $r$ edge disjoint perfect matchings in an arbitrary graph is studied in \citet{fomin2024diverse} and \citet{fomin2024diverse2}.

Next, consider a set of colors $C$, with $|C|=|R|$, and let every round correspond to a unique color. We can now color the edges of each perfect matching with the color of the corresponding round. For an example, see \Cref{fig:Equivalence_iRR}. In the remainder of this text, we use $R$ and $C$ interchangeably, and may refer to round $s \in R$ as color $s$.
Consider a mapping $c: E \rightarrow C \cup \{-1\}$ that assigns to each edge in $E$ either a color from the set $C$, or $-1$, in which case the edge remains uncolored. We define the concept of a proper $r$-regular (partial) edge coloring:

\begin{definition}[\textbf{Proper $r$-regular (partial) edge coloring}]
  Given is a graph $G(V,E)$ and a set of colors $C$. A mapping $c: E \rightarrow C \cup \{-1\}$ is called a proper $r$-regular (partial) edge coloring if:
  \begin{enumerate}
      \item For every vertex $v \in V$, the number of edges $e$ incident to $v$ such that $c(e) \in C$ is exactly $r$.
      \item For any two distinct edges $e_1, e_2$ incident to the same vertex $v$, if $c(e_1) \neq -1$ and $c(e_2) \neq -1$, then $c(e_1) \neq c(e_2)$.
  \end{enumerate}
\end{definition}

Proper $r$-regular (partial) edge colorings are related to the Maximum $r$-Edge-Colorable Subgraph Problem, which asks to maximize the number of edges in a proper edge coloring of an arbitrary graph, by using at most $r$ colors, see e.g. \citet{feige2002approximating, sanders2008asymptotic, kosowski2009approximating} and \citet{rizzi2009approximating}. In the remainder of the text, for ease of notation we may refer to a proper $r$-regular (partial) edge coloring simply as a coloring. 

A graph $G(V,E)$ whose edges are colored according to a coloring $c$ can be referred to as $G(V,E,c)$. Given a coloring $c$, define $E_c = \{e \in E: c(e) \in C\}$ and $N_c(i) = \{j \in V: c(\{i,j\}) \in C\}$. Since in a coloring of $G$ an edge is colored with at most one color and the subgraph of edges colored with every color $s \in R$ is a perfect matching, a coloring is equivalent to an iRR timetable satisfying \textbf{C1} and \textbf{C2}. Therefore, $o(i,s) = j \iff c(\{i,j\}) = s$.

A digraph is a graph whose edges are directed; a directed edge is also called an arc. In a partially colored graph, only the colored edges are directed, as directing an uncolored edge has no meaning. In every selected match $\{i,j\}$ of an iRR tournament, either $i$ or $j$ is the home team. If $i$ is the home team, we can point the edge from $j$ to $i$ and denote it as $(i,j)$, and if $j$ is the home team we can point the edge from $i$ to $j$ and write $(j,i)$. The direction of edges is defined by $h$, i.e.\ $h(i,s) = 1$ implies that there is exactly one $j \in T \setminus \{i\}$ for which $c(\{i,j\}) = s$ and $j$ points to $i$. Similarly, $h(i,s) = 0$ implies that there is exactly one $j \in T \setminus \{i\}$ for which $c(\{i,j\}) = s$ and $i$ points to $j$. If $c(\{i,j\}) = s$, then $h(i,s) \neq h(j,s)$. We are now ready to define the \emph{match graph}:

\begin{definition}[\textbf{Match graph}]
    Given a proper partial $r$-edge colored graph $G(V,E,c)$ and an orientation $h$, we refer to the oriented graph $G_c(V,E_c,c)$, or more simply $G_c$, as the match graph.
\end{definition}

The match graph represents the subset of matches that is chosen to be in the timetable; for each team, the orientation $h$ of its edges corresponds to its so-called home-away pattern \citep{van2020robinx}. It is easy to see that a coloring of $G$ is equivalent to a one-factorization of the subgraph $G_c(V,E_c,c)$.

The number of incoming edges for a vertex $v$ is given by $d^+(v)$ and is called the indegree of $v$. The number of outgoing edges is given by $d^-(v)$ and is called the outdegree of $v$. It is a well known fact that for every directed graph $G(V,E)$, it holds that $\sum_{v \in V}d^+(v) = \sum_{v \in V}d^-(v)$ = $|E|$. Let $\Delta(v) = d^+(v) - d^-(v)$. We call an $r$-regular digraph balanced if and only if $\Delta(v) = 0$ for every vertex $v$ in case $r$ is even and $\Delta(v) \in \{-1,1\}$ for every vertex $v$ in case $r$ is odd. The “unbalancedness” of a digraph can be measured by $\delta = \sum_{v:|\Delta(v)|>0}|\Delta(v)|$ if $r$ is even and $\delta = \sum_{v:|\Delta(v)|>1}|\Delta(v)|$ if $r$ is odd. Then, a digraph as a whole is called balanced if and only if $\delta = 0$. In any digraph, we can partition the set of vertices $V$ into $V^0 = \{v \in V: \Delta(v) = 0\}$, $V^+ = \{v \in V: \Delta(v) > 0\}$, and $V^- = \{v \in V: \Delta(v) < 0\}$. Note that $V^0 = \emptyset$ in case $r$ is odd.

\begin{figure}[h!]
\centering
\caption{Equivalence between a set of disjoint perfect matchings and an iRR timetable. Dashed edges correspond to unscheduled matches.}
\label{fig:Equivalence_iRR}
\begin{subfigure}[t]{0.45\textwidth}
\centering
\vspace{-10pt}
\begin{tikzpicture}
    \foreach \i/\name\label in {0/3, 1/2, 2/1, 3/8, 4/7, 5/6, 6/5, 7/4} {
        \node[default] (\name) at (\i*360/8:2cm) {\name};
    }
\draw[<-,edge2] (3) -- (2);
\draw[<-,edge1] (3) -- (1);
\draw[fictive_edge] (3) -- (8);
\draw[->,edge3] (3) -- (7);
\draw[fictive_edge] (3) -- (6);
\draw[->,edge5] (3) -- (5);
\draw[->,edge4] (3) -- (4);

\draw[<-,edge4] (2) -- (1); 
\draw[<-,edge5] (2) -- (8);
\draw[->,edge1]  (2) -- (7);
\draw[fictive_edge] (2) -- (6);
\draw[<-,edge3] (2) -- (5);
\draw[fictive_edge] (2) -- (4);

\draw[<-,edge3] (1) -- (8);
\draw[fictive_edge] (1) -- (7);
\draw[<-,edge5] (1) -- (6);
\draw[fictive_edge] (1) -- (5);
\draw[<-,edge2] (1) -- (4);

\draw[<-,edge4] (8) -- (7);
\draw[<-,edge1] (8) -- (6);
\draw[->,edge2] (8) -- (5);
\draw[fictive_edge] (8) -- (4);

\draw[->,edge2] (7) -- (6);
\draw[fictive_edge] (7) -- (5);
\draw[<-,edge5] (7) -- (4);

\draw[->,edge4] (6) -- (5);
\draw[<-,edge3] (6) -- (4);

\draw[->,edge1] (5) -- (4);

\end{tikzpicture}
\caption{Five disjoint oriented perfect matchings\\ \phantom{a}}
\label{fig:PerfectMatchings}
\end{subfigure}
\begin{subfigure}[t]{0.45\textwidth}
  \centering
  \vspace{27pt}
  \begin{tabular}{ccccc} 
    \toprule
    \tikzmarknode{colstart1}{R1} & \tikzmarknode{colstart4}{R2} & \tikzmarknode{colstart2}{R3} & \tikzmarknode{colstart3}{R4} & \tikzmarknode{colstart5}{R5} \\\midrule 
    3-1 & 4-3 & 3-2 & 7-3 & 1-6 \\
    7-2 & 2-1 & 1-4 & 2-5 & 2-8 \\
    8-6 & 8-7 & 5-8 & 1-8 & 5-3 \\
    \tikzmarknode{colend1}{4-5} & \tikzmarknode{colend4}{5-6} &\tikzmarknode{colend2}{6-7} & \tikzmarknode{colend3}{6-4} & \tikzmarknode{colend5}{7-4} \\ 
    \bottomrule
  \end{tabular}
    \begin{tikzpicture}[remember picture,overlay]
      \node[
        fit=(colstart1)(colend1),
        draw=color1,
        thick,
        inner sep=3pt,
        rounded corners
      ] {};
    \end{tikzpicture}
    \begin{tikzpicture}[remember picture,overlay]
      \node[
        fit=(colstart2)(colend2),
        draw=color2,
        thick,
        inner sep=3pt,
        rounded corners
      ] {};
    \end{tikzpicture}
    \begin{tikzpicture}[remember picture,overlay]
      \node[
        fit=(colstart3)(colend3),
        draw=color3,
        thick,
        inner sep=3pt,
        rounded corners
      ] {};
    \end{tikzpicture}
    \begin{tikzpicture}[remember picture,overlay]
      \node[
        fit=(colstart4)(colend4),
        draw=color4,
        thick,
        inner sep=3pt,
        rounded corners
      ] {};
    \end{tikzpicture}
    \begin{tikzpicture}[remember picture,overlay]
      \node[
        fit=(colstart5)(colend5),
        draw=color5,
        thick,
        inner sep=3pt,
        rounded corners
      ] {};
    \end{tikzpicture}
    \vspace{20pt}
\caption{Corresponding timetable to the colored graph in (a)}
\label{fig:timetable_iRR}
\end{subfigure}
\end{figure}

We have now introduced all the necessary tools to view the iRR timetabling problem as a graph problem. In particular, a feasible iRR timetable is equivalent to a proper partial oriented $r$-edge coloring of $K_n$ such that the resulting match graph is a balanced digraph. 


\subsection{Cycles}

A cycle in an undirected graph is defined as a list of vertices $(v_1,v_2,\dots,v_k,v_1)$ such that any two vertices $v_i, v_{i+1}$, $i = \{1,\dots,k-1\}$, are distinct and joined by the edge $\{v_{i}, v_{i+1}\}$ and $v_k$ links back to $v_1$ through the edge $\{v_{k}, v_{1}\}$. Two important types of cycles in this paper are bichromatic and alternating cycles. A bichromatic cycle is a cycle whose edges alternate between two distinct colors. An alternating cycle is a cycle whose edges alternate between uncolored edges and edges colored with one particular color, see \citet{bang1997alternating} for an overview. These are formally defined as follows:

\begin{definition}
    A $q,s$-bichromatic cycle is a list of vertices $(v_1,v_2,\dots,v_k,v_1)$ such that:
    \begin{align}
      c(\{v_i,v_{i (\!\bmod k) +1}\}) =
      \begin{cases}
         q & \text{if} \ i \text{ is odd}, \\
         s & \text{if} \ i \text{ is even}
      \end{cases}
      \quad \forall i = 1,\dots,k
    \end{align}
    for some $q,s \in R: q \neq s$.
\end{definition}

\begin{definition}
    An $s$-alternating cycle is a list of vertices $(v_1,v_2,\dots,v_k,v_1)$ such that:
    \begin{align}
      c(\{v_i,v_{i (\!\bmod k) +1}\}) =
      \begin{cases}
         s & \text{if} \ i \text{ is odd}, \\
         -1 & \text{if} \ i \text{ is even}
      \end{cases}
      \quad \forall i = 1,\dots,k
    \end{align}
    for some round $s \in R$.
\end{definition}

Note that $q,s$-bichromatic and $s$-alternating cycles exist only for $k \geq 4$ and $k$ even.

In conventional graph theory, a cycle in a directed graph is defined as a list of vertices such that any two vertices $v_i, v_{i+1}$ are joined by the arc $(v_{i}, v_{i+1})$. This implies that each vertex in the cycle has one incoming and one outgoing arc. In this paper, all colored edges are also directed. However, a vertex in a bichromatic cycle may contain two incoming or two outgoing arcs. To avoid confusion, a cycle with the property that any vertex in the cycle has exactly one incoming and one outgoing arcs is called a \emph{balanced} cycle in this paper:

\begin{definition}
    A balanced cycle is a list of vertices $(v_1,v_2,\dots,v_k,v_1)$ such that any two consecutive vertices $v_i$ and $v_{i (\!\bmod k) +1}$ are joined by the arc $(v_{i}, v_{i (\!\bmod k) +1})$. Hence, a balanced cycle only contains colored arcs.
\end{definition}

A cycle can be both balanced and bichromatic; we may refer to such a cycle as a balanced bichromatic cycle. Balanced and bichromatic paths are defined equivalently. For ease of notation, in the remainder of this paper we simply refer to arc $(v_{k}, v_{1})$ in a cycle as $(v_{k}, v_{k+1})$.

Finally, we define the concept of a balanced $s$-alternating cycle:

\begin{definition}
    A balanced $s$-alternating cycle is an alternating cycle such that, for each uncolored edge $\{i,j\}$ in the cycle, $h(i,s) \neq h(j,s)$. 
\end{definition}

In case the alternating cycle contains at least one uncolored edge $\{i,j\}$ where $h(i,s) = h(j,s)$, the alternating cycle is said to be unbalanced.



 \subsection{Connectivity in iRR}\label{sec:IntroConnectivity}

Since the input graph is $K_n$, finding a feasible iRR timetable is rather simple and can be done in linear time, see e.g.\ \cite{devriesere2026iTTP}. However, the question remains how to move to other feasible timetables. This can be done by defining neighborhood structures. Let $\mathcal{S}$ be the set of all possible timetables of either a RR or iRR tournament. Given a timetable $S\in \mathcal{S}$, a neighborhood structure $\mathcal{N}$ is a function that maps to each timetable $S$ a set of timetables $\mathcal{N}(S)$. A move in the neighborhood structure modifies $S$ to obtain $S'$ such that $S' \in \mathcal{N}(S)$. A neighborhood structure is called connected if, starting from any timetable $S \in \mathcal{S}$, any other timetable $S' \in \mathcal{S}$ can be reached by applying to $S$ a finite sequence of moves in the neighborhood (see e.g.\ \citet{nishimura2018introduction}).

Two sets of one-factors $\mathcal{F} = \{F_1, F_2, \dots, F_r\}$ and $\mathcal{F'} = \{F_1^{'}, F_2^{'}, \dots, F_r^{'}\}$ of graph $G(V,E)$ are called isomorphic if and only if there exists a bijection $\phi: V \rightarrow V$ such that, for every $k \in R$, the set $\{\{\phi(i),\phi(j)\} \ | \ i,j \in V: \{i,j\} \in F_k\} = F_l^{'}$ for some $l \in R$.  Isomorphism plays a central role in solution space connectivity. Indeed, if a neighborhood structure fails to find timetables that are non-isomorphic, then it is necessarily disconnected. We end this section by defining different types of connectivity:

\begin{definition}[\textbf{Color-wise connectivity}]
  A neighborhood structure is color-wise connected for an iRR tournament with $r$ rounds if and only if, starting from any partially $r$-edge colored graph $G(V,E,c)$, any other partially $r$-edge colored graph $G'(V,E,c')$ can be reached by applying a finite sequence of moves in the neighborhood, while remaining properly $r$-edge colored and balanced at any point in the sequence. Otherwise, we call the neighborhood color-wise disconnected.
\end{definition} 

Note that color-wise connectivity does not say anything about the orientations of the edges in the match graph (i.e.\ it is irrelevant for each match which team is the home team).

\begin{definition}[\textbf{Orientation-wise connectivity}]
  A neighborhood structure is orientation-wise connected for an iRR tournament if and only if, starting from a match graph $G_c$ with orientation $h$ such that $G_c$ is a balanced digraph, any other orientation $h'$ of $G_{c^{'}}$, such that $E_c = E_{c^{'}}$, can be reached by a finite sequence of moves in the neighborhood structure, while remaining properly $r$-edge colored and balanced at any point in the sequence. Otherwise, we call the neighborhood orientation-wise disconnected.
\end{definition}

Note that orientation-wise connectivity does not say anything about the coloring of the match graph. However, it requires that the edges in the match graphs $G_{c}$ and $G_{c'}$ are the same (i.e.\ the teams play against the same set of opponents in both cases), and that the orientations of these edges are identical.

\begin{definition}[\textbf{Full connectivity}]
  A neighborhood is fully connected for an iRR tournament if and only if it is both color-wise and orientation-wise connected.
\end{definition}

\section{Neighborhood structures}\label{sec:Neighborhoods}

In this section, we discuss the neighborhood structures. A solution is represented by a proper partially $r$-colored graph $G(V,E,c)$ (for ease of notation we will often simply refer to this graph as $G$), such that the match graph $G_c$ is a balanced digraph. We will often illustrate the neighborhood structure with an example move; unless otherwise specified, the move is applied to the graph shown in \Cref{fig:PerfectMatchings}. The neighborhood structures of Sections~\ref{sec:RS} to \ref{sec:PTS} are existing neighborhoods for RR tournaments, and we will discuss how they can be applied to iRR tournaments. Finally, in \Cref{sec:i-PTS,sec:i-PRS} two novel neighborhood structures are presented that are specifically designed for iRR tournaments. Each neighborhood move presented here can be done in linear time relative to the solution representation. 

\begin{figure}[h!]
\centering
\caption{Illustration of RS and PRS with rounds R2 (purple) and R3 (blue)}
\label{fig:RS}
\begin{subfigure}{0.3\textwidth}
\centering
\begin{tikzpicture}
    \foreach \i/\name\label in {0/3, 1/2, 2/1, 3/8, 4/7, 5/6, 6/5, 7/4} {
        \node[default] (\name) at (\i*360/8:2cm) {\name};
    }
    \draw[<-,edge2,very thick] (3) -- (2);
    \draw[<-,edge1blurred] (3) -- (1);
    \draw[fictive_edge] (3) -- (8);
    \draw[->,edge3blurred] (3) -- (7);
    \draw[fictive_edge] (3) -- (6);
    \draw[->,edge5blurred] (3) -- (5);
    \draw[->,edge4,very thick] (3) -- (4);

    \draw[<-,edge4,very thick] (2) -- (1); 
    \draw[<-,edge5blurred] (2) -- (8);
    \draw[->,edge1blurred]  (2) -- (7);
    \draw[fictive_edge] (2) -- (6);
    \draw[<-,edge3blurred] (2) -- (5);
    \draw[fictive_edge] (2) -- (4);

    \draw[<-,edge3blurred] (1) -- (8);
    \draw[fictive_edge] (1) -- (7);
    \draw[<-,edge5blurred] (1) -- (6);
    \draw[fictive_edge] (1) -- (5);
    \draw[<-,edge2,very thick] (1) -- (4);

    \draw[<-,edge4,very thick] (8) -- (7);
    \draw[<-,edge1blurred] (8) -- (6);
    \draw[->,edge2,very thick] (8) -- (5);
    \draw[fictive_edge] (8) -- (4);

    \draw[->,edge2,very thick] (7) -- (6);
    \draw[fictive_edge] (7) -- (5);
    \draw[<-,edge5blurred] (7) -- (4);

    \draw[->,edge4,very thick] (6) -- (5);
    \draw[<-,edge3blurred] (6) -- (4);

    \draw[->,edge1blurred] (5) -- (4);

\end{tikzpicture}
\caption{Initial coloring}
\label{fig:RS1}
\end{subfigure}
\hfill
\begin{subfigure}{0.3\textwidth}
\centering
\begin{tikzpicture}
    \foreach \i/\name\label in {0/3, 1/2, 2/1, 3/8, 4/7, 5/6, 6/5, 7/4} {
        \node[default] (\name) at (\i*360/8:2cm) {\name};
    }
    \draw[<-,edge4,very thick] (3) -- (2);
    \draw[<-,edge1blurred] (3) -- (1);
    \draw[fictive_edge] (3) -- (8);
    \draw[->,edge3blurred] (3) -- (7);
    \draw[fictive_edge] (3) -- (6);
    \draw[->,edge5blurred] (3) -- (5);
    \draw[->,edge2,very thick] (3) -- (4);

    \draw[<-,edge2,very thick] (2) -- (1); 
    \draw[<-,edge5blurred] (2) -- (8);
    \draw[->,edge1blurred]  (2) -- (7);
    \draw[fictive_edge] (2) -- (6);
    \draw[<-,edge3blurred] (2) -- (5);
    \draw[fictive_edge] (2) -- (4);

    \draw[<-,edge3blurred] (1) -- (8);
    \draw[fictive_edge] (1) -- (7);
    \draw[<-,edge5blurred] (1) -- (6);
    \draw[fictive_edge] (1) -- (5);
    \draw[<-,edge4,very thick] (1) -- (4);

    \draw[<-,edge2,very thick] (8) -- (7);
    \draw[<-,edge1blurred] (8) -- (6);
    \draw[->,edge4,very thick] (8) -- (5);
    \draw[fictive_edge] (8) -- (4);

    \draw[->,edge4,very thick] (7) -- (6);
    \draw[fictive_edge] (7) -- (5);
    \draw[<-,edge5blurred] (7) -- (4);

    \draw[->,edge2,very thick] (6) -- (5);
    \draw[<-,edge3blurred] (6) -- (4);

    \draw[->,edge1blurred] (5) -- (4);

\end{tikzpicture}
\caption{Coloring after RS}
\label{fig:RS2}
\end{subfigure}
\hfill
\begin{subfigure}{0.3\textwidth}
\centering
\begin{tikzpicture}
    \foreach \i/\name\label in {0/3, 1/2, 2/1, 3/8, 4/7, 5/6, 6/5, 7/4} {
        \node[default] (\name) at (\i*360/8:2cm) {\name};
    }
    \draw[<-,edge2,very thick] (3) -- (2);
    \draw[<-,edge1blurred] (3) -- (1);
    \draw[fictive_edge] (3) -- (8);
    \draw[->,edge3blurred] (3) -- (7);
    \draw[fictive_edge] (3) -- (6);
    \draw[->,edge5blurred] (3) -- (5);
    \draw[->,edge4,very thick] (3) -- (4);

    \draw[<-,edge4,very thick] (2) -- (1); 
    \draw[<-,edge5blurred] (2) -- (8);
    \draw[->,edge1blurred]  (2) -- (7);
    \draw[fictive_edge] (2) -- (6);
    \draw[<-,edge3blurred] (2) -- (5);
    \draw[fictive_edge] (2) -- (4);

    \draw[<-,edge3blurred] (1) -- (8);
    \draw[fictive_edge] (1) -- (7);
    \draw[<-,edge5blurred] (1) -- (6);
    \draw[fictive_edge] (1) -- (5);
    \draw[<-,edge2,very thick] (1) -- (4);

    \draw[<-,edge2,very thick] (8) -- (7);
    \draw[<-,edge1blurred] (8) -- (6);
    \draw[->,edge4,very thick] (8) -- (5);
    \draw[fictive_edge] (8) -- (4);

    \draw[->,edge4,very thick] (7) -- (6);
    \draw[fictive_edge] (7) -- (5);
    \draw[<-,edge5blurred] (7) -- (4);

    \draw[->,edge2,very thick] (6) -- (5);
    \draw[<-,edge3blurred] (6) -- (4);

    \draw[->,edge1blurred] (5) -- (4);

\end{tikzpicture}
\caption{Coloring after PRS}
\label{fig:RS3}
\end{subfigure}
\end{figure}

\subsection{Round Swap (RS)}\label{sec:RS}

A round swap (RS) exchanges all matches between two rounds $q$ and $s$ by coloring all edges colored $q$ with $s$ and visa versa. Since there are $\frac{n}{2}$ edges in each round, a move in this neighborhood structure runs in $\mathcal{O}(|V|)$ time. Note that in both cases, the colors but not the orientations of edges are exchanged. An illustration of round swap can be found in \Cref{fig:RS1,fig:RS2}. As this neighborhood structure only involves colored edges, it can readily be applied to iRR tournaments. Since a RS merely relabels two colors, and thus only produces colorings that are isomorphic to each other, this neighborhood structure is color-wise disconnected \citep{Januario2016}. Moreover, it does not modify the direction of the edges, so it is also orientation-wise disconnected.

\subsection{Partial Round Swap (PRS)}\label{sec:PRS}

The move partial round swap (PRS) takes as input two distinct rounds $q$ and $s$ and considers the subgraph formed by all edges that are colored $q$ or $s$. Since each color corresponds to a one-factor in $G$, it follows that the subgraph formed by any two distinct colors results in a collection of edge-disjoint $q,s$-bichromatic cycles covering all vertices of $G$ (see e.g., \citet[p.78]{bondy1976graph}). PRS consists in finding one such cycle and exchanging the colors of all its edges, without changing their orientation. This requires $\mathcal{O}(|V|)$ time. For an illustration of this move, see \Cref{fig:RS1,fig:RS3}. Similarly to RS, it can readily be applied to iRR tournaments. 

In case a one-factorization of the match graph $G_c$ is perfect, any subgraph formed by two distinct colors is a Hamiltonian cycle. In this case, PRS=RS for any possible combination of two distinct rounds \citep{Januario2015}. It is well known that such one-factorization exist (for references, see \citet{Januario2016}). Therefore, PRS is both color-wise and orientation-wise disconnected.

\begin{figure}[h!]
\centering
\caption{TS between teams 1 and 4}
\label{fig:TS}
\begin{subfigure}{0.30\textwidth}
\centering
\begin{tikzpicture}
    \foreach \i/\name\label in {0/3, 1/2, 2/1, 3/8, 4/7, 5/6, 6/5, 7/4} {
        \node[default] (\name) at (\i*360/8:2cm) {\name};
    }
  \draw[<-,edge2blurred] (3) -- (2);
  \draw[<-,edge1] (3) -- (1);
  \draw[fictive_edge] (3) -- (8);
  \draw[->,edge3blurred] (3) -- (7);
  \draw[fictive_edge] (3) -- (6);
  \draw[->,edge5blurred] (3) -- (5);
  \draw[->,edge4] (3) -- (4);

  \draw[<-,edge4] (2) -- (1); 
  \draw[<-,edge5blurred] (2) -- (8);
  \draw[->,edge1blurred]  (2) -- (7);
  \draw[fictive_edge] (2) -- (6);
  \draw[<-,edge3blurred] (2) -- (5);
  \draw[fictive_edge] (2) -- (4);

  \draw[<-,edge3] (1) -- (8);
  \draw[fictive_edge] (1) -- (7);
  \draw[<-,edge5] (1) -- (6);
  \draw[fictive_edge] (1) -- (5);
  \draw[<-,edge2] (4) -- (1);

  \draw[<-,edge4blurred] (8) -- (7);
  \draw[<-,edge1blurred] (8) -- (6);
  \draw[->,edge2blurred] (8) -- (5);
  \draw[fictive_edge] (8) -- (4);

  \draw[->,edge2blurred] (7) -- (6);
  \draw[fictive_edge] (7) -- (5);
  \draw[<-,edge5] (7) -- (4);

  \draw[->,edge4blurred] (6) -- (5);
  \draw[<-,edge3] (6) -- (4);

  \draw[->,edge1] (5) -- (4);

\end{tikzpicture}
\caption{Initial coloring}
\label{fig:TS1}
\end{subfigure}
\hspace{2cm}
\begin{subfigure}{0.30\textwidth}
\centering
\begin{tikzpicture}
    \foreach \i/\name\label in {0/3, 1/2, 2/4, 3/8, 4/7, 5/6, 6/5, 7/1} {
        \node[default] (\name) at (\i*360/8:2cm) {\name};
    }
  \draw[<-,edge2blurred] (3) -- (2);
  \draw[<-,edge1] (3) -- (4);
  \draw[fictive_edge] (3) -- (8);
  \draw[->,edge3blurred] (3) -- (7);
  \draw[fictive_edge] (3) -- (6);
  \draw[->,edge5blurred] (3) -- (5);
  \draw[->,edge4] (3) -- (1);

  \draw[<-,edge4] (2) -- (4);
  \draw[<-,edge5blurred] (2) -- (8);
  \draw[->,edge1blurred]  (2) -- (7);
  \draw[fictive_edge] (2) -- (6);
  \draw[<-,edge3blurred] (2) -- (5);
  \draw[fictive_edge] (1) -- (4);
  \draw[fictive_edge] (1) -- (2);

  \draw[<-,edge3] (4) -- (8);
  \draw[fictive_edge] (4) -- (7);
  \draw[<-,edge5] (4) -- (6);
  \draw[fictive_edge] (4) -- (5);
  \draw[<-,edge2] (1) -- (4);

  \draw[<-,edge4blurred] (8) -- (7);
  \draw[<-,edge1blurred] (8) -- (6);
  \draw[->,edge2blurred] (8) -- (5);
  \draw[fictive_edge] (8) -- (1);

  \draw[->,edge2blurred] (7) -- (6);
  \draw[fictive_edge] (7) -- (5);
  \draw[<-,edge5] (7) -- (1);

  \draw[->,edge4blurred] (6) -- (5);
  \draw[<-,edge3] (6) -- (1);

  \draw[->,edge1] (5) -- (1);
  \draw[->,edge1] (5) -- (1);

\end{tikzpicture}
\caption{Coloring after TS}
\label{fig:TS2}
\end{subfigure}
\end{figure}

\subsection{Team Swap (TS)}\label{sec:TS}


The move team swap (TS) takes as input two distinct teams $i$ and $j$ and exchanges their positions over the whole timetable by swapping the labels of $i$ and $j$ in $G$. It therefore runs in $\mathcal{O}(|V|)$ time. 

In contrast to RS and PRS, it can modify the set of edges that are colored and hence may seem to introduce new games. For example, after swapping teams 1 and 4 in \Cref{fig:TS2}, team 1 now faces teams 5 and 7 instead of teams 2 and 8. However, since TS merely relabels the vertices of the graph, the resulting (directed) match graph is isomorphic to the initial match graph.
Therefore, TS is both color-wise and orientation-wise disconnected.

\begin{figure}[h!]
\centering
\caption{CR of (1,2,3,4,6,1)}
\label{fig:CR}
\begin{subfigure}{0.45\textwidth}
\centering
\begin{tikzpicture}
    \foreach \i/\name\label in {0/3, 1/2, 2/1, 3/8, 4/7, 5/6, 6/5, 7/4} {
        \node[default] (\name) at (\i*360/8:2cm) {\name};
    }
    \draw[<-,edge2, very thick] (3) -- (2);
    \draw[<-,edge1blurred] (3) -- (1);
    \draw[fictive_edge] (3) -- (8);
    \draw[->,edge3blurred] (3) -- (7);
    \draw[fictive_edge] (3) -- (6);
    \draw[->,edge5blurred] (3) -- (5);
    \draw[->,edge4, very thick] (3) -- (4);

    \draw[<-,edge4, very thick] (2) -- (1); 
    \draw[<-,edge5blurred] (2) -- (8);
    \draw[->,edge1blurred]  (2) -- (7);
    \draw[fictive_edge] (2) -- (6);
    \draw[<-,edge3blurred] (2) -- (5);
    \draw[fictive_edge] (2) -- (4);

    \draw[<-,edge3blurred] (1) -- (8);
    \draw[fictive_edge] (1) -- (7);
    \draw[<-,edge5, very thick] (1) -- (6);
    \draw[fictive_edge] (1) -- (5);
    \draw[<-,edge2blurred] (1) -- (4);

    \draw[<-,edge4blurred] (8) -- (7);
    \draw[<-,edge1blurred] (8) -- (6);
    \draw[->,edge2blurred] (8) -- (5);
    \draw[fictive_edge] (8) -- (4);

    \draw[->,edge2blurred] (7) -- (6);
    \draw[fictive_edge] (7) -- (5);
    \draw[<-,edge5blurred] (7) -- (4);

    \draw[->,edge4blurred] (6) -- (5);
    \draw[<-,edge3, very thick] (6) -- (4);

    \draw[->,edge1blurred] (5) -- (4);

\end{tikzpicture}
\caption{Initial orientation}
\label{fig:CR1}
\end{subfigure}
\begin{subfigure}{0.45\textwidth}
\centering
\begin{tikzpicture}
    \foreach \i/\name\label in {0/3, 1/2, 2/1, 3/8, 4/7, 5/6, 6/5, 7/4} {
        \node[default] (\name) at (\i*360/8:2cm) {\name};
    }
    \draw[->,edge2, very thick] (3) -- (2);
    \draw[<-,edge1blurred] (3) -- (1);
    \draw[fictive_edge] (3) -- (8);
    \draw[->,edge3blurred] (3) -- (7);
    \draw[fictive_edge] (3) -- (6);
    \draw[->,edge5blurred] (3) -- (5);
    \draw[<-,edge4, very thick] (3) -- (4);

    \draw[->,edge4, very thick] (2) -- (1); 
    \draw[<-,edge5blurred] (2) -- (8);
    \draw[->,edge1blurred]  (2) -- (7);
    \draw[fictive_edge] (2) -- (6);
    \draw[<-,edge3blurred] (2) -- (5);
    \draw[fictive_edge] (2) -- (4);

    \draw[<-,edge3blurred] (1) -- (8);
    \draw[fictive_edge] (1) -- (7);
    \draw[->,edge5, very thick] (1) -- (6);
    \draw[fictive_edge] (1) -- (5);
    \draw[<-,edge2blurred] (1) -- (4);

    \draw[<-,edge4blurred] (8) -- (7);
    \draw[<-,edge1blurred] (8) -- (6);
    \draw[->,edge2blurred] (8) -- (5);
    \draw[fictive_edge] (8) -- (4);

    \draw[->,edge2blurred] (7) -- (6);
    \draw[fictive_edge] (7) -- (5);
    \draw[<-,edge5blurred] (7) -- (4);

    \draw[->,edge4blurred] (6) -- (5);
    \draw[->,edge3, very thick] (6) -- (4);

    \draw[->,edge1blurred] (5) -- (4);

\end{tikzpicture}
\caption{Orientation after CR}
\label{fig:CR2}
\end{subfigure}
\end{figure}

\subsection{Cycle and Path Reversal (CR and PR)}\label{sec:PR}



The move cycle reversal (CR), first proposed by \citet{knust2006balanced} for RR tournaments, takes as input a balanced cycle and reverses its edges. For an illustration of this neighborhood, see \Cref{fig:CR}. Reversing the orientation of a balanced cycle in $G$ does not alter the number of home and away games per team. Therefore, a balanced timetable remains balanced under CR. If the match graph $G_c$ is balanced, then every vertex must have an outdegree of at least 1. Hence, we observe that for iRR tournaments with at least two rounds, an arbitrary balanced cycle always exists and can be found by a random walk, which runs in $\mathcal{O}(|V|)$ time (see \citet{chvatal1983short}). Indeed, after visiting at most $n$ vertices, we must necessarily return to an already visited vertex, at which point we have found a balanced cycle. 



We now show that if a timetable is unbalanced, it can always be made balanced by moves in the neighborhood structure path reversal (PR). A move in this neighborhood reverses the direction of a path in $G$ between two distinct nodes $i$ and $j$. First, we prove the following lemmas:

\begin{lemma}\label{lemma:PR1}
  In an unbalanced match graph $G_c(V,E_c,c)$ there always exists a path from any $w \in V^-$ to some $v \in V^+$. Similarly, for any vertex $v \in V^+$, there always exists a vertex $w \in V^-$ such that there is a path from $w$ to $v$.
\end{lemma}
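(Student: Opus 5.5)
The plan is a cut-counting argument on the set of vertices reachable from $w$. It uses only that the sum of $\Delta$ over a vertex set is determined by the arcs crossing its boundary. I use the paper's conventions: $d^+(v)$ counts arcs entering $v$, $d^-(v)$ counts arcs leaving $v$, and $\Delta(v)=d^+(v)-d^-(v)$. A path from $w$ to $v$ means a directed path following the arcs of $G_c$.

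\textbf{Key identity.} For any $X\subseteq V$, every arc with both endpoints in $X$ contributes $+1$ to one in-degree and $+1$ to one out-degree inside $X$, so it cancels. Hence
\[
\sum_{x\in X}\Delta(x)=|\delta^{\mathrm{in}}(X)|-|\delta^{\mathrm{out}}(X)|,
\]
where $\delta^{\mathrm{in}}(X)$ and $\delta^{\mathrm{out}}(X)$ are the sets of arcs entering and leaving $X$.

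\textbf{First part.} Fix $w\in V^-$ and let $X$ be the set of vertices reachable from $w$ by a directed path, with $w\in X$.
\begin{itemize}
\item No arc leaves $X$, since its head would also be reachable from $w$. So $\delta^{\mathrm{out}}(X)=\emptyset$.
\item By the identity, $\sum_{x\in X}\Delta(x)=|\delta^{\mathrm{in}}(X)|\ge 0$.
\item Since $\Delta(w)<0$, some $v\in X$ must have $\Delta(v)>0$, so $v\in V^+$.
\item By construction of $X$ there is a directed path from $w$ to $v$, and $v\neq w$ because their $\Delta$ signs differ.
\end{itemize}

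\textbf{Second part.} This is the mirror image. Fix $v\in V^+$ and let $Y$ be the set of vertices from which $v$ can be reached, with $v\in Y$.
\begin{itemize}
\item No arc enters $Y$, so $\sum_{y\in Y}\Delta(y)=-|\delta^{\mathrm{out}}(Y)|\le 0$.
\item Since $\Delta(v)>0$, some $w\in Y$ has $\Delta(w)<0$, i.e.\ $w\in V^-$, and $w$ has a path to $v$.
\end{itemize}

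There is no real obstacle. The only care needed is the orientation convention: $V^-$ vertices have surplus outgoing arcs, so paths start there. Each part should be stated for a given vertex of the appropriate class, which exists because the graph is unbalanced. The identity holds for every digraph, so the argument does not depend on the parity of $r$; for odd $r$ the classes $V^\pm$ may include vertices with $|\Delta|=1$, which is harmless.
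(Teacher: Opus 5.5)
Your proof is correct, but it takes a different route from the paper's.

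\textbf{The paper's route.} It uses a maximal-trail argument. It starts at $w \in V^-$ and walks along arcs, never reusing an arc. Local degree counting shows the walk can never get stuck at a vertex of $V^0 \cup V^-$ (including $w$ itself), so the walk must terminate in $V^+$. The paper then removes closed sub-walks to extract a path, and handles the second part by walking backwards along arcs.

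\textbf{Your route.} You take the set $X$ of vertices reachable from $w$ and apply the global identity $\sum_{x\in X}\Delta(x)=|\delta^{\mathrm{in}}(X)|-|\delta^{\mathrm{out}}(X)|$ together with $\delta^{\mathrm{out}}(X)=\emptyset$. This avoids the visit-by-visit bookkeeping and the cycle-removal step entirely. The directed path comes for free from the definition of $X$, and the second part is a clean mirror image.

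\textbf{What each buys.} The paper's argument proves a slightly stronger, local fact: \emph{every} maximal trail from $w$ ends in $V^+$. That is what directly justifies the paper's remark that a random walk or DFS-style procedure always succeeds in finding the path. Your argument shows only that $V^+$ meets the set reachable from $w$. It is just as algorithmic, though: a BFS from $w$ finds such a $v$, and even a shortest path to it, which fits the BFS-based path search the paper later uses in its experiments.
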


\begin{proof}
  Let $w \in V^-$. Start a random walk from $w$, choosing each edge at most once until the path can no longer be extended. The walk will never end in a vertex $u \in V^0 \cup V^-$, since if we arrive at $u$ through an incoming edge, then we must also be able to leave $u$ through an unvisited outgoing edge. Hence, it follows that the walk must necessarily end in a vertex $v \in V^+$. From this walk, we extract a path by removing the balanced cycles from the walk. In other words, if the walk contains the consecutive vertices $v_0, v_1, \dots, v_0$, we remove the vertices $v_1, \dots, v_0$ and arcs in between. We do this until all vertices in the walk are unique; at this point we have retrieved a path from $w$ to $v$. 

  For the second part of the lemma, let $v \in V^+$, but now travel backwards: for vertex $v_i$, pick a vertex $v_{i+1}$ such that the edge points from $v_{i+1}$ to $v_i$, with $v_0 = v$. The walk can only end in a vertex $w \in V^-$, and from the walk we can extract a path from $w$ to $v$.
\end{proof}

Note that the proof of Lemma~\ref{lemma:PR1} involves a constructive algorithm for finding a path between two vertices. This algorithm can be implemented by, for example, Depth First Search (DFS), which runs in $\mathcal{O}(|V|+|E_c|)$ time on the match graph $G(V,E_c,c)$. 

We now show that any unbalanced iRR timetable can be made balanced by moves in the neighborhood PR. In order to do this, we first prove the following lemma:

\begin{lemma}\label{lemma:PR2}
  In any match graph $G(V,E_c,c)$, if $r$ is even, then $\delta = 4k$ for some positive integer $k$.
\end{lemma}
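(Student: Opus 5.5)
The plan is a parity argument on the imbalances $\Delta(v)$, using the handshake identity for digraphs stated earlier. I expect no real obstacle; the only subtle point is the word ``positive''.

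\emph{Step 1: each $\Delta(v)$ is even.} Since the match graph is $r$-regular, every vertex satisfies $d^+(v)+d^-(v)=r$. Hence
\[
\Delta(v)=d^+(v)-d^-(v)=r-2d^-(v).
\]
When $r$ is even, this makes $\Delta(v)$ even for every $v\in V$.

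\emph{Step 2: the positive and negative parts have equal size.} By the identity $\sum_{v}d^+(v)=\sum_v d^-(v)=|E_c|$, we get $\sum_{v\in V}\Delta(v)=0$. Vertices in $V^0$ contribute nothing, so
\[
\sum_{v\in V^+}\Delta(v)=\sum_{v\in V^-}|\Delta(v)|.
\]
For $r$ even, $\delta=\sum_{v:|\Delta(v)|>0}|\Delta(v)|$ is exactly the sum of these two equal quantities. Therefore $\delta=2\sum_{v\in V^+}\Delta(v)$.

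\emph{Step 3: conclude.} By Step 1 each term $\Delta(v)$ with $v\in V^+$ is even, so $\sum_{v\in V^+}\Delta(v)=2k$ for some integer $k\ge 0$. Hence $\delta=4k$.

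\emph{Step 4: positivity of $k$.} The statement says ``positive'', which holds precisely when the match graph is unbalanced. In that case $V^+\neq\emptyset$, and each $v\in V^+$ has $\Delta(v)\ge 2$, giving $k\ge1$. In the balanced case we have $k=0$. I would state this caveat explicitly, so that the lemma reads: $\delta$ is a non-negative multiple of $4$, and a positive one whenever $G_c$ is unbalanced, which is the setting in which it is used.
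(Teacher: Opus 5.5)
Your proof is correct and follows the paper's argument: each $\Delta(v)$ is even when $r$ is even (you justify this via $\Delta(v)=r-2d^-(v)$, which the paper only asserts), and $\sum_{v}d^+(v)=\sum_{v}d^-(v)$ makes the positive and negative imbalances equal in total, so $\delta=2\sum_{v\in V^+}\Delta(v)$ is a multiple of $4$. Your Step~4 caveat is also right: the paper's proof only gives a non-negative multiple of $4$, with $k=0$ in the balanced case, and $k\geq 1$ holds exactly when the match graph is unbalanced, which is the only setting where the lemma is used (in the proof of Theorem~\ref{thm:PR}).
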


\begin{proof}
  Recall that $\delta$ is the total unbalancedness. Since $r$ is even, $\Delta(v) = |d^+(v) - d^-(v)|$ is even for all $v \in V$. Since $\sum_{v \in V}d^+(v) = \sum_{v \in V}d^-(v)$ for any digraph, we must have that $\sum_{v \in V: \Delta(v) \geq 1}|\Delta(v)| = \sum_{v \in V: \Delta(v) \leq -1}|\Delta(v)|$, and hence $\delta = \sum_{v \in V: \Delta(v) \geq 1}|\Delta(v)| + \sum_{v \in V: \Delta(v) \leq -1}|\Delta(v)| = 2 \left( \sum_{v \in V: \Delta(v) \geq 1}|\Delta(v)| \right)$. Therefore, $\delta$ is a multiple of 4.
\end{proof}

We are now ready to prove the following:

\begin{theorem}\label{thm:PR}
  Any unbalanced iRR timetable can be made balanced by a finite number of path reversals.
\end{theorem}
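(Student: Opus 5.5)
The plan is a potential argument: each path reversal will strictly decrease the unbalancedness $\delta$. Since $\delta$ is a nonnegative integer, finitely many reversals then reach $\delta = 0$, which is exactly a balanced timetable.

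Reversing a path $(v_0,v_1,\dots,v_k)$ with $v_0 \neq v_k$ has a simple effect on $\Delta$. Every interior vertex $v_i$, $0<i<k$, loses one incoming and one outgoing arc and gains one of each, so $\Delta(v_i)$ is unchanged. At the start $v_0$, an outgoing arc becomes incoming, so $\Delta(v_0)$ increases by $2$. At the end $v_k$, an incoming arc becomes outgoing, so $\Delta(v_k)$ decreases by $2$. The colouring is not touched, so C1 and C2 are preserved.

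\textbf{Case $r$ even.} Suppose $\delta>0$. Since $\sum_v \Delta(v)=0$, some $w$ has $\Delta(w)<0$ and some vertex has $\Delta>0$, so $V^-\neq\emptyset$. By Lemma~\ref{lemma:PR1} there is a path from $w\in V^-$ to some $v\in V^+$. As $r$ is even, every $\Delta$ is even, so $\Delta(w)\le -2$ and $\Delta(v)\ge 2$. After reversal $|\Delta(w)|$ and $|\Delta(v)|$ each drop by exactly $2$, so $\delta$ decreases by $4$. This is consistent with Lemma~\ref{lemma:PR2}: $\delta$ stays a multiple of $4$ and reaches $0$ after $\delta/4$ reversals.

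\textbf{Case $r$ odd.} Now every $\Delta(v)$ is odd, and $\delta$ counts only vertices with $|\Delta(v)|\ge 3$. This is where the main obstacle lies. Lemma~\ref{lemma:PR1} gives a path from $V^-$ to $V^+$, but a vertex with $\Delta=-1$ reaching one with $\Delta=+1$ does not help. I would instead take a vertex $w$ with $\Delta(w)\le -3$ (the case $\Delta(v)\ge 3$ is symmetric, using the second part of Lemma~\ref{lemma:PR1}).

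Next I would rerun the walk argument from the proof of Lemma~\ref{lemma:PR1} with a stronger claim: the walk from $w$, using each arc at most once, must end at a vertex $u$ with $\Delta(u)\ge 1$, since any vertex entered with $\Delta\le 0$, or equal to $w$ itself, still has an unused outgoing arc. Extracting a path from $w$ to $u$ and reversing it gives $\Delta(w)\mapsto \Delta(w)+2\le -1$ and $\Delta(u)\mapsto \Delta(u)-2\ge -1$.

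It remains to check that $\delta$ drops in both subcases. If $\Delta(w)\le -5$, the contribution of $w$ falls by $2$. If $\Delta(w)=-3$, it falls by $3$. Vertex $u$ either had contribution $0$ (when $\Delta(u)=1$, becoming $-1$) or its contribution falls. Hence $\delta$ strictly decreases, and the process terminates.

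Finally, since orientations and colours are otherwise untouched, each path in these arguments is in the match graph and exists as required. This gives the theorem, and it is algorithmic, with each step found by DFS.
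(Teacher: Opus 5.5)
Your proof is correct and follows the same route as the paper. In both parity cases you use Lemma~\ref{lemma:PR1} to find a path between a vertex with excess away games and one with excess home games, and you show that reversing it strictly decreases $\delta$ (by exactly $4$ when $r$ is even). Your odd case is in fact more careful than the paper's: you explicitly start from a vertex with $|\Delta|\ge 3$ and check the drop in $\delta$ case by case, whereas the paper only asserts that the imbalance decreases ``by either 1 or 2''.
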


\begin{proof}
    We split the proof into two cases:
    
  \emph{Case 1: $r$ is even.}\\
  If the iRR timetable is unbalanced, then $|V^+|, |V^-| \geq 1$.
  By Lemma \ref{lemma:PR1}, there exists a path from some $w \in V^-$ to any $v \in V^+$ in the directed match graph. Reversing this path increases $\Delta(w)$ and decreases $\Delta(v)$, both by 2, and leaves $\Delta(u)$ unchanged for any node $u$ in the path different from $v$ and $w$, so the imbalance $\delta$ decreases by 4. By Lemma \ref{lemma:PR2}, $\delta$ is a multiple of 4, so if we keep reversing paths between teams with excessive home and away games, eventually $\delta = 0$. 

  \emph{Case 2: $r$ is odd.}\\
  If the iRR timetable is unbalanced, then there must exist at least one team $v$ such that $\Delta(v) \geq 2$ or $\Delta(v) \leq -2$. By Lemma \ref{lemma:PR1}, if $\Delta(v) \geq 2$ we can find a path to a vertex $w$ such that $\Delta(w) \leq -1$, and if $\Delta(v) \leq -2$, we can find a path to a vertex $u$ such that $\Delta(u) \geq 1$. In both cases, a path can be found such that reversing this path decreases the imbalance by either 1 or 2. Hence, if we keep reversing paths involving at least one team with an excessive home-or away game, eventually $\delta = 0$.
\end{proof}

An arbitrary path in $G_c$ can also be found in polynomial time by, for example, DFS. Since CR and PR only affect the orientation of the match graph, they are color-wise disconnected. \citet{knust2006balanced} show that paths of length at most 2 edges are sufficient to restore balancedness for RR tournaments. In contrast, in iRR tournaments, paths of length $\frac{n}{2}$ may be required to restore balancedness. Consider for example an iRR tournament such that the match graph is a Hamiltonian cycle, all teams except for two distinct teams $v$ and $w$ are balanced, and $v$ has
two home games and $w$ has two away games. Then, there are two paths from $v$ to $w$, and the other teams
could be placed such that both these paths consist of $\frac{n}{2}$ edges.

In \Cref{sec:CR_PR_theory}, we show that the neighborhood structure CR is orientation-wise connected. 


\subsection{Partial Team Swap (PTS)}\label{sec:PTS} 

\begin{figure}[h!]
\centering
\caption{PTS between teams 1 and 7 and rounds R1 (red), R2 (purple) and R4 (green)}
\label{fig:PTS}
\begin{subfigure}{0.15\textwidth}
\centering
\begin{tikzpicture}
    
    \node[default] (1) at (0,1.5) {1};
    \node[default] (2) at (0,0) {2};
    \node[default] (3) at (-1,0) {3};
    \node[default] (8) at (1,0) {8};
    \node[default] (7) at (0,-1.5) {7};

    \draw[<-,edge3, very thick] (1) to[bend left=5] (8);
    \draw[->,edge4, very thick] (1) to (2);
    \draw[->,edge1, very thick] (1) to[bend right=5]  (3);

    \draw[->,edge4, very thick] (7) to[bend right=5]  (8);
    \draw[<-,edge1, very thick] (7) to (2);
    \draw[<-,edge3, very thick] (7) to[bend left=5]  (3);

\end{tikzpicture}
\caption{Lantern}
\label{fig:lantern}
\end{subfigure}
\hfill
\begin{subfigure}{0.20\textwidth}
\centering
\begin{tikzpicture}
    
    \node[default] (1) at (0,1.5) {1};
    \node[default] (2) at (0,0) {2};
    \node[default] (3) at (-1,0) {3};
    \node[default] (8) at (1,0) {8};
    \node[default] (7) at (0,-1.5) {7};

    \draw[<-,edge4, very thick] (1) to[bend left=5] (8);
    \draw[->,edge1, very thick] (1) to (2);
    \draw[->,edge3, very thick] (1) to[bend right=5]  (3);

    \draw[->,edge3, very thick] (7) to[bend right=5]  (8);
    \draw[<-,edge4, very thick] (7) to (2);
    \draw[<-,edge1, very thick] (7) to[bend left=5]  (3);

\end{tikzpicture}
\caption{Lantern after PTS}
\label{fig:lantern_PTS}
\end{subfigure}
\hfill
\begin{subfigure}{0.3\textwidth}
\centering
\begin{tikzpicture}
    \foreach \i/\name\label in {0/3, 1/2, 2/1, 3/8, 4/7, 5/6, 6/5, 7/4} {
        \node[default] (\name) at (\i*360/8:2cm) {\name};
    }
    \draw[<-,edge2blurred] (3) -- (2);
    \draw[<-,edge1, very thick] (3) -- (1);
    \draw[fictive_edge] (3) -- (8);
    \draw[->,edge3, very thick] (3) -- (7);
    \draw[fictive_edge] (3) -- (6);
    \draw[->,edge5blurred] (3) -- (5);
    \draw[->,edge4blurred] (3) -- (4);

    \draw[<-,edge4, very thick] (2) -- (1); 
    \draw[<-,edge5blurred] (2) -- (8);
    \draw[->,edge1, very thick]  (2) -- (7);
    \draw[fictive_edge] (2) -- (6);
    \draw[<-,edge3blurred] (2) -- (5);
    \draw[fictive_edge] (2) -- (4);

    \draw[<-,edge3, very thick] (1) -- (8);
    \draw[fictive_edge] (1) -- (7);
    \draw[<-,edge5blurred] (1) -- (6);
    \draw[fictive_edge] (1) -- (5);
    \draw[<-,edge2blurred] (1) -- (4);

    \draw[<-,edge4, very thick] (8) -- (7);
    \draw[<-,edge1blurred] (8) -- (6);
    \draw[->,edge2blurred] (8) -- (5);
    \draw[fictive_edge] (8) -- (4);

    \draw[->,edge2blurred] (7) -- (6);
    \draw[fictive_edge] (7) -- (5);
    \draw[<-,edge5blurred] (7) -- (4);

    \draw[->,edge4blurred] (6) -- (5);
    \draw[<-,edge3blurred] (6) -- (4);

    \draw[->,edge1blurred] (5) -- (4);

\end{tikzpicture}
\caption{Initial coloring}
\label{fig:PTS1}
\end{subfigure}
\begin{subfigure}{0.3\textwidth}
\centering
\begin{tikzpicture}
    \foreach \i/\name\label in {0/3, 1/2, 2/1, 3/8, 4/7, 5/6, 6/5, 7/4} {
        \node[default] (\name) at (\i*360/8:2cm) {\name};
    }
    \draw[<-,edge2blurred] (3) -- (2);
    \draw[<-,edge3, very thick] (3) -- (1);
    \draw[fictive_edge] (3) -- (8);
    \draw[->,edge1, very thick] (3) -- (7);
    \draw[fictive_edge] (3) -- (6);
    \draw[->,edge5blurred] (3) -- (5);
    \draw[->,edge4blurred] (3) -- (4);

    \draw[<-,edge1, very thick] (2) -- (1); 
    \draw[<-,edge5blurred] (2) -- (8);
    \draw[->,edge4, very thick]  (2) -- (7);
    \draw[fictive_edge] (2) -- (6);
    \draw[<-,edge3blurred] (2) -- (5);
    \draw[fictive_edge] (2) -- (4);

    \draw[<-,edge4, very thick] (1) -- (8);
    \draw[fictive_edge] (1) -- (7);
    \draw[<-,edge5blurred] (1) -- (6);
    \draw[fictive_edge] (1) -- (5);
    \draw[<-,edge2blurred] (1) -- (4);

    \draw[<-,edge3, very thick] (8) -- (7);
    \draw[<-,edge1blurred] (8) -- (6);
    \draw[->,edge2blurred] (8) -- (5);
    \draw[fictive_edge] (8) -- (4);

    \draw[->,edge2blurred] (7) -- (6);
    \draw[fictive_edge] (7) -- (5);
    \draw[<-,edge5blurred] (7) -- (4);

    \draw[->,edge4blurred] (6) -- (5);
    \draw[<-,edge3blurred] (6) -- (4);

    \draw[->,edge1blurred] (5) -- (4);

\end{tikzpicture}
\caption{Coloring after PTS}
\label{fig:PTS2}
\end{subfigure}
\end{figure}

For any subset $R' \subseteq R$, define $N_{R'}^i = \{v \in V: c(\{i,v\}) \in R'\}$. In other words, $N^i_{R'}$ denotes the set of opponents of team $i$ in the rounds $R'$. The neighborhood structure partial team swap (PTS) takes as input two distinct teams $i$ and $j$ and a subset of colors $R' \subseteq R$, such that $N_{R'}^i = N_{R'}^j$, and exchanges the opponents of $i$ and $j$ over all rounds in $R'$. In graph theoretical terms, this corresponds to a subgraph $G'(V',E')$ with $V' = \{i,j\} \cup W$, $W = N_{R'}^i = N_{R'}^j$, and $E' = \{\{i,w\}, w \in W\} \cup \{\{j,w\}, w \in W\}$, which is known as a colorful chordless lantern in case it is inclusion-wise minimal (for more details, we refer to \citet{urrutia2021recoloring} and the references herein). An example of a colorful chordless lantern and a move in this neighborhood structure are shown in \Cref{fig:PTS}. Note that the constructed lantern consists of a collection of (possibly unbalanced) paths of length two between $i$ and $j$.



In contrast to all the other neighborhood structures that we discussed so far, there does not always exists a PTS move in an iRR tournament. Consider for example an iRR tournament with only 2 rounds and at least 6 teams, such that the match graph is a Hamiltonian cycle. It can be verified that there exists no PTS move in this case. Therefore, PTS is color-wise and orientation-wise disconnected.

\citet{januario2016new} extend PTS by introducing the neighborhood structure Team And Round Swap (TARS), which again takes as input a colorful chordless lantern between two distinct teams $i$ and $j$, but allows one bichromatic path between them to be of an even length larger than two.  A further generalization called Generalized Partial Team Swap (GTPS) is discussed in \citet{ribeiro2023combinatorial}, where any number of paths can have an even length larger than two. 
However, similarly to CR, RS, PRS and PTS, both TARS and GPTS do not introduce new games into the timetable. Moreover, it is currently unknown how to efficiently perform a move in GPTS. Therefore, we do not further consider these neighborhood structures in this paper.

\subsection{Incomplete Partial Team Swap (iPTS)}\label{sec:i-PTS}

Here, we propose our first novel neighborhood structure, which we call incomplete Partial Team Swap (iPTS). It generalizes PTS discussed in \Cref{sec:PTS} to iRR tournaments. The main goal of this neighborhood is to allow the introduction of a single new game, followed by a minimal repair chain to make the timetable feasible again. At the end of this section, we show how to combine iPTS with cycle reversals in order to make moves less disruptive with respect to the orientation of the match graph. 

\begin{figure}[h!]
\centering
\caption{Incomplete partial team swap between teams 1 and 4}
\label{fig:PTS_iRR}
\begin{subfigure}{0.15\textwidth}
\centering
\begin{tikzpicture}
    
    \node[default] (1) at (0,1.5) {1};
    \node[default] (3) at (0,0) {3};
    \node[default] (2) at (-1,0) {2};
    \node[default] (5) at (1,0) {5};
    \node[default] (4) at (0,-1.5) {4};

    \node[right=2pt of 1] {+1H};
    \node[right=2pt of 4] {+1A};

    \draw[->,edge4, very thick] (1) to[bend right=5] (2);
    \draw[->,edge1, very thick] (1) to (3);
    \draw[fictive_edge, very thick] (1) to[bend left=5]  (5);

    \draw[fictive_edge, very thick] (4) to[bend left=5]  (2);
    \draw[<-,edge4, very thick] (4) to (3);
    \draw[<-,edge1, very thick] (4) to[bend right=5]  (5);

\end{tikzpicture}
\caption{Incomplete lantern \\ \phantom{a}}
\label{fig:PTS_iRR1}
\end{subfigure}
\hfill
\begin{subfigure}{0.20\textwidth}
\centering
\begin{tikzpicture}
    
    \node[default] (1) at (0,1.5) {1};
    \node[default] (3) at (0,0) {3};
    \node[default] (2) at (-1,0) {2};
    \node[default] (5) at (1,0) {5};
    \node[default] (4) at (0,-1.5) {4};

    \node[right=2pt of 1] {+2H};
    \node[right=2pt of 4] {+2A};

    \draw[fictive_edge, very thick] (1) to[bend right=5] (2);
    \draw[->,edge4, very thick] (1) to (3);
    \draw[<-,edge1, very thick] (1) to[bend left=5]  (5);

    \draw[->, edge4, very thick] (4) to[bend left=5]  (2);
    \draw[<-,edge1, very thick] (4) to (3);
    \draw[fictive_edge, very thick] (4) to[bend right=5]  (5);

\end{tikzpicture}
\caption{Incomplete lantern after swapping the colors of the edges}
\label{fig:PTS_iRR2}
\end{subfigure}
\begin{subfigure}{0.30\textwidth}
\centering
\begin{tikzpicture}
    \foreach \i/\name\label in {0/3, 1/2, 2/1, 3/8, 4/7, 5/6, 6/5, 7/4} {
        \node[default] (\name) at (\i*360/8:2cm) {\name};
    }

    \node[right=2pt of 1] {+1H};
    \node[right=2pt of 4] {+1A};

\draw[<-,edge2blurred] (3) -- (2);
\draw[<-,edge1, very thick] (3) -- (1);
\draw[fictive_edge] (3) -- (8);
\draw[->,edge3blurred] (3) -- (7);
\draw[fictive_edge] (3) -- (6);
\draw[->,edge5blurred] (3) -- (5);
\draw[->,edge4, very thick] (3) -- (4);

\draw[<-,edge4, very thick] (2) -- (1); 
\draw[<-,edge5blurred] (2) -- (8);
\draw[->,edge1blurred]  (2) -- (7);
\draw[fictive_edge] (2) -- (6);
\draw[<-,edge3blurred] (2) -- (5);
\draw[fictive_edge, dashed, very thick] (2) -- (4);

\draw[<-,edge3blurred] (1) -- (8);
\draw[fictive_edge] (1) -- (7);
\draw[<-,edge5blurred] (1) -- (6);
\draw[fictive_edge, dashed, very thick] (1) -- (5);
\draw[<-,edge2, very thick] (1) -- (4);

\draw[<-,edge4blurred] (8) -- (7);
\draw[<-,edge1blurred] (8) -- (6);
\draw[->,edge2blurred] (8) -- (5);
\draw[fictive_edge] (8) -- (4);

\draw[->,edge2blurred] (7) -- (6);
\draw[fictive_edge] (7) -- (5);
\draw[<-,edge5blurred] (7) -- (4);

\draw[->,edge4blurred] (6) -- (5);
\draw[<-,edge3blurred] (6) -- (4);

\draw[->,edge1, very thick] (5) -- (4);

\end{tikzpicture}
\caption{Original coloring\\ \phantom{a} \\ \phantom{a}}
\label{fig:PTS_iRR3}
\end{subfigure}
\begin{subfigure}{0.30\textwidth}
\centering
\begin{tikzpicture}
    \foreach \i/\name\label in {0/3, 1/2, 2/1, 3/8, 4/7, 5/6, 6/5, 7/4} {
        \node[default] (\name) at (\i*360/8:2cm) {\name};
    }

    \node[right=2pt of 1] {+1H};
    \node[right=2pt of 4] {+1A};
    
\draw[<-,edge2blurred] (3) -- (2);
\draw[<-,edge4, very thick] (3) -- (1);
\draw[fictive_edge] (3) -- (8);
\draw[->,edge3blurred] (3) -- (7);
\draw[fictive_edge] (3) -- (6);
\draw[->,edge5blurred] (3) -- (5);
\draw[->,edge1, very thick] (3) -- (4);

\draw[fictive_edge,dashed,very thick] (2) -- (1); 
\draw[<-,edge5blurred] (2) -- (8);
\draw[->,edge1blurred]  (2) -- (7);
\draw[fictive_edge] (2) -- (6);
\draw[<-,edge3blurred] (2) -- (5);
\draw[<-,edge4, very thick] (2) -- (4);

\draw[<-,edge3blurred] (1) -- (8);
\draw[fictive_edge] (1) -- (7);
\draw[<-,edge5blurred] (1) -- (6);
\draw[<-,edge1,very thick] (1) -- (5);
\draw[->,edge2,very thick] (1) -- (4);

\draw[<-,edge4blurred] (8) -- (7);
\draw[<-,edge1blurred] (8) -- (6);
\draw[->,edge2blurred] (8) -- (5);
\draw[fictive_edge] (8) -- (4);

\draw[->,edge2blurred] (7) -- (6);
\draw[fictive_edge] (7) -- (5);
\draw[<-,edge5blurred] (7) -- (4);

\draw[->,edge4blurred] (6) -- (5);
\draw[<-,edge3blurred] (6) -- (4);

\draw[fictive_edge,dashed,very thick] (5) -- (4);

\end{tikzpicture}
\caption{Coloring after iPTS, involving a path reversal between teams 1 and 4}
\label{fig:PTS_iRR4}
\end{subfigure}
\end{figure}

\subsubsection{Move}

A move in iPTS takes as input two distinct teams $i$ and $j$ and a color $s \in R$, such that $s \neq c(\{i,j\})$. Based on this input, it gradually builds a subset $R' \subseteq R$, such that either $N_{R'}^i \setminus N_{R'}^j = \{w^j\}$ and $N_{R'}^j \setminus N_{R'}^i = \{w^i\}$, where $\{i,w^i\}$ and $\{j,w^j\}$ are the two uncolored edges, or $N_{R'}^{i} = N_{R'}^{j}$. Let $W = N_{R'}^{i} \cup \{w^i\} = N_{R'}^{j} \cup \{w^j\}$. In graph theoretical terms, the input in the former case corresponds to the subgraph $G'(V',E')$ with $V' = \{i,j\} \cup W$ and $E' = \{\{i,w\}, \{j,w\} \ | \ w \in W\}$, such that $c(\{i,w^i\}) = -1$, $c(\{j,w^j\}) = -1$, and $\{c(\{i,w\}  \ | \  w \in W \setminus \{w^i\}\} = \{c(\{j,w\}  \ | \  w \in W \setminus \{w^j\}\} = R'$. We call this subgraph an incomplete lantern. An illustration is given in \Cref{fig:PTS_iRR1}. In this example, $i=1, j=4, W = \{2,3,5\}, w^i=5$ and $w^j=2$. In the latter case, note that the subgraph corresponds to a colorful chordless lantern.

For a given lantern (see \Cref{subsec:lanterns} to construct one in $\mathcal{O}(|V|)$ time), for every team $w \in W$, the colors between the two edges incident to $w$ are swapped. This is illustrated in \Cref{fig:PTS_iRR1,fig:PTS_iRR2}. Since an incomplete lantern contains exactly two uncolored edges, two new games are introduced. Finally, we determine the orientation of each edge. In the classic PTS, the colors between edges are typically exchanged without modifying the orientations of the edges. In iPTS, we need to specify how to deal with the orientations when coloring initially uncolored edges, which we propose to do as follows. For a given incomplete lantern, let $c^i = c(\{i,w^j\})$ and $c^j = c(\{j,w^i\})$, both before swapping colors. Then, an iPTS move uncolors the edges $\{i,w^j\}$ and $\{j,w^i\}$ and colors edge the $\{j,w^j\}$ with $c^i$ and $\{i,w^i\}$ with $c^j$. Now, we propose to set $h(i,c^j) = h(j,c^j)$ and $h(j,c^i) = h(i,c^i)$. As a result, $w^i$ and $w^j$ are guaranteed to remain balanced, while $i$ and $j$ potentially become unbalanced. In particular, $i$ and $j$ become unbalanced if and only if $h(i,c^i) \neq h(j,c^j)$. Since an incomplete lantern can be found and reversed in $\mathcal{O}(|V|)$ time (see further) and an arbitrary path between $i$ and $j$ in the match graph can be found in $\mathcal{O}(|V|+|E_c|)$ time (e.g.\ using DFS), a move in iPTS runs in $\mathcal{O}(|V|+|E_c|)$ time. 

For example, in the lantern in \Cref{fig:PTS_iRR3}, team 1 and 4 initially have no excessive home or away games and are thus perfectly balanced. However, after swapping the colors of the edges, team 1 now has one excessive home game and team 4 now has one excessive away game. We now discuss how to restore the balancedness. From \Cref{thm:PR} we know that the imbalance can be repaired by reversing a path going from the team with an excessive away game to the team with an excessive home game. For example, reversing the orientation of the arc $(4,1)$ to $(1,4)$ restores the imbalance for 1 and 4. This is illustrated in \Cref{fig:PTS_iRR4}. Hence, in order to guarantee that the match graph remains balanced, the move iPTS is complemented with at most one path reversal.

Note that if the input results in a colorful chordless lantern, the move boils down to a PTS move.

\subsubsection{Constructing lanterns}\label{subsec:lanterns}

We now show how to construct an incomplete (or colorful chordless) lantern in $\mathcal{O}(|V|)$ time. As input we require two distinct teams $i$ and $j$ and a color $s \in R$. Let the sets $W$ and $R'$ initially be empty. Our algorithm will fill the set $W$ with vertices such that the subgraph $G(V',E')$ defined above is either a colorful chordless or an incomplete lantern. For this purpose, let $\psi^{k}_{t}$ be the $k$'th color in the lantern incident to team $t$. For any pair of distinct teams $i,j$ and color $s$, we define the following recurrence relation for team $j$:

\begin{align}
  & \psi^{1}_j = s  \label{eq:psi_j0} \\
  & \psi_j^{k} = c(\{j,o(i,\psi_j^{k-1}\}) & \text{for} \ k \geq 2 \label{eq:psi_j}
\end{align}

and similarly for team $i$:

\begin{align}
  & \psi^{1}_i = s  \label{eq:psi_i0} \\
  & \psi_i^{p} = c(\{i,o(j,\psi_i^{p-1}\}) & \text{for} \ p \geq 2 \label{eq:psi_i}
\end{align}

The algorithm starts by adding to $R'$ the set of colors defined by the recurrence relation given in Equations~\eqref{eq:psi_j0} and \eqref{eq:psi_j}. First, we initialize $k$ to 1. Next, we increment $k$ by 1 until $\psi_j^k = -1$ or $\psi_j^k = s$. In case that $\psi_j^k = s$, let $E' = \{\{i,o(i,\psi^{l}_j)\}, \{j,o(j,\psi^{l+1}_j)\}: l \in \{1,\dots,k-1\}\}$ and $W = \{o(i,\psi^{l}_j): k \in \{1,\dots,k-1\}\}$. Let $V' = \{i,j\} \cup W$. Then, the subgraph $G(V',E')$ is a colorful chordless lantern. Note that a colorful chordless lantern is found in at most $r$ steps. 


In case that that $\psi_i^k = -1$, we set a new variable $p$ to 1 and add to $R'$ the set of colors defined by the recurrence relation given in Equations~\eqref{eq:psi_i0} and \eqref{eq:psi_i}. Then, we increment $p$ by 1 until $\psi_j^p = -1$. Let $E' = \{\{i,o(i,\psi^{l}_j)\}, \{j,o(i,\psi^{l}_j)\}: l \in \{1,\dots,k-1\}\} \cup \{\{j,o(i,\psi^{q}_i)\}, \{i,o(i,\psi^{q}_i)\}: q \in \{1,\dots,p-1\}\}$ and $W = \{o(i,\psi^{l}_j): l \in \{1,\dots,k-1\}\} \cup \{o(j,\psi^{q}_i): q \in \{1,\dots,p-1\}\}$. Let $V' = \{i,j\} \cup W$. Then, the subgraph $G(V',E')$ is an incomplete lantern. An incomplete lantern is found in at most $r+2$ steps. Since $r < |V|-1$, both a colorful chordless and incomplete lantern can be found $\mathcal{O}(|V|)$ time.


Note that if we take as input a vertex $w$ such that there exists two distinct teams $i$ and $j$ with $c(\{i,w\}) \neq -1$ and $c(\{j,w\}) = -1$, the constructive algorithm that takes as input $i,j$ and $c(\{i,w\})$ is guaranteed to produce an incomplete lantern. In contrast, there is no trivial way of choosing vertices $i,j$ and color $c$ such that the lantern is guaranteed to result in a colorful chordless lantern (i.e.\ not containing uncolored edges).

\subsubsection{iPTS with internal cycle reversal (iPTS-CR)}

While iPTS guarantees balancedness for the teams in the middle of the lantern, it may change the match orientations for a large number of teams. However, modifying many home-away patterns may be undesirable, for example when strict constraints are imposed on them. Therefore, we now show how to combine iPTS with internal cycle reversals to mitigate this effect. We refer to the resulting neighborhood as 
iPTS-CR, which can be seen as an ejection chain of iPTS and CR moves.

Consider the lantern (colorful chordless or incomplete) with vertex set $V = \{i,j\} \cup W$ following from the iPTS move. We now partition the set $W$ into three sets $W_1,W_2$ and $W_3$. Let $W_1 = \{w \in W \ | \ c(i,w) \neq -1 \ \land \ h(w,c(i,w))=0 \ \land \ h(w,c(j,w))=1\}$, $W_2 = \{w \in W \ | \ c(i,w) \neq -1 \ \land \ h(w,c(i,w))=1 \ \land \ h(w,c(j,w))=0\}$ and $W_3 = W \setminus (W_1 \cup W_2)$. Then, for any $w_1 \in W_1$ and $w_2 \in W_2$, the subgraph in the lantern consisting of $i,j,w_1$ and $w_2$ is a balanced internal cycle. In this case, swapping the orientations guarantees that the home-away patterns of $w_1$ and $w_2$ remain the same as before the move. For example, in the colorful chordless lantern in \Cref{fig:PTS2}, an iPTS move (corresponding to a PTS move) modifies the home-away patterns of all five teams in the lantern. However, if we swap the orientations of the cycle consisting of teams 1,3,7 and 8, the orientations of all these teams are restored in the affected rounds. Therefore, moves in the neighborhood iPTS-CR consist in doing a conventional iPTS move, followed by $\min \{|W_1|, |W_2|\}$ arbitrary cycle reversals.
This way, moves in this neighborhood now have a less disruptive effect on the current solution.

\subsection{Incomplete Partial Round Swap (iPRS)}\label{sec:i-PRS}

Finally, we introduce the neighborhood structure \emph{incomplete partial round swap} (iPRS). The main goal of this neighborhood is to introduce new games into one single round $s \in R$, while leaving all other rounds unmodified. 
It does this by swapping the colored for the uncolored edges of an $s$-alternating cycle (see \Cref{sec:Notation}). 
We introduce two versions of iPRS: one that does not affect the original home-away patterns (iPRS-B) and another that does not offer this guarantee (iPRS-U). 



\subsubsection{Balanced iPRS (iPRS-B)}\label{sec:iPRS-B}

The neighborhood structure iPRS-B takes as input a round $s \in R$, finds a balanced $s$-alternating cycle, and swaps the colored for the uncolored edges. We now propose how to find such a cycle. First, we construct the bipartite graph $G_s^b(V^{h}_s, V^{a}_s, A_s^b)$, with $V^{h}_s = \{i \in V: h(i,s) = 1\}$, $V^{a}_s = \{i \in V: h(i,s) = 0\}$ and arc set $A_s^b = \{(j,i): i \in V^{h}_s, j \in V^{a}_s, c(\{i,j\}) = s\} \cup \{(i,j): i \in V^{h}_s, j \in V^{a}_s, c(\{i,j\}) = -1\}$. \Cref{fig:BipartiteMatching_iRR2} gives this graph based on the graph shown in \Cref{fig:BipartiteMatching_iRR1}, for the purple color. We now run a cycle detection algorithm on $G_s^b$. Indeed, any cycle found in $G^b_s$ must be a balanced alternating cycle. To see this, start from an arbitrary vertex and note that any arc in $A_s^b$ with color $s$ points from a vertex in $V^{a}_s$ to a vertex in $V^{h}_s$, and any uncolored arc points from a vertex in $V^{h}_s$ to a vertex in $V^{a}_s$. Hence, by construction, the cycle is alternating and balanced. 

Given a balanced $s$-alternating cycle, a move in iPRS-B consists of first swapping the colored for the uncolored edges and then setting the orientations of the newly colored edges, such that each team's home-away status in round $s$ is preserved.
For example, in \Cref{fig:BipartiteMatching_iRR2} the alternating balanced cycle with edges $\{1,2\}, \{2,6\}, \{5,6\}$ and $\{1,5\}$ can be found, and applying iPRS-B on this cycle results in the coloring shown in \Cref{fig:BipartiteMatching_iRR3}. An arbitrary cycle in $G_s^b$ can be found with DFS, which runs in $\mathcal{O}(|V|+|A_s^b|) = \mathcal{O}(|V|^2)$ time.

\begin{figure}[h!]
\centering
\caption{iPRS-B}
\label{fig:BipartiteMatching_iRR}
\begin{subfigure}{0.3\textwidth}
\centering
\begin{tikzpicture}
    \foreach \i/\name\label in {0/3, 1/2, 2/1, 3/8, 4/7, 5/6, 6/5, 7/4} {
        \node[default] (\name) at (\i*360/8:2cm) {\name};
    }
\draw[<-,edge2blurred] (3) -- (2);
\draw[<-,edge1blurred] (3) -- (1);
\draw[fictive_edge, very thick, dashed] (3) -- (8);
\draw[->,edge3blurred] (3) -- (7);
\draw[fictive_edge, very thick, dashed] (3) -- (6);
\draw[->,edge5blurred] (3) -- (5);
\draw[->,edge4, very thick] (3) -- (4);

\draw[<-,edge4, very thick] (2) -- (1); 
\draw[<-,edge5blurred] (2) -- (8);
\draw[->,edge1blurred]  (2) -- (7);
\draw[fictive_edge, very thick, dashed] (2) -- (6);
\draw[<-,edge3blurred] (2) -- (5);
\draw[fictive_edge, very thick, dashed] (2) -- (4);

\draw[<-,edge3blurred] (1) -- (8);
\draw[fictive_edge, very thick, dashed] (1) -- (7);
\draw[<-,edge5blurred] (1) -- (6);
\draw[fictive_edge, very thick, dashed] (1) -- (5);
\draw[<-,edge2blurred] (1) -- (4);

\draw[<-,edge4, very thick] (8) -- (7);
\draw[<-,edge1blurred] (8) -- (6);
\draw[->,edge2blurred] (8) -- (5);
\draw[fictive_edge, very thick, dashed] (8) -- (4);

\draw[->,edge2blurred] (7) -- (6);
\draw[fictive_edge, very thick, dashed] (7) -- (5);
\draw[<-,edge5blurred] (7) -- (4);

\draw[->,edge4, very thick] (6) -- (5);
\draw[<-,edge3blurred] (6) -- (4);

\draw[->,edge1blurred] (5) -- (4);

\end{tikzpicture}
\caption{Initial coloring\\ \phantom{a}}
\label{fig:BipartiteMatching_iRR1}
\end{subfigure}
\begin{subfigure}{0.3\textwidth}
\centering
\begin{tikzpicture}
    \node[default] (1) at (-1,  2.0) {1};
    \node[default] (3) at (-1,  0.667) {3};
    \node[default] (6) at (-1, -0.667) {6};
    \node[default] (7) at (-1, -2.0) {7};

    \node[default] (2) at (1,  2.0) {2};
    \node[default] (4) at (1,  0.667) {4};
    \node[default] (5) at (1, -0.667) {5};
    \node[default] (8) at (1, -2.0) {8};

    \node[above=2pt of 1] {$V^a_s$};
    \node[above=2pt of 2] {$V^h_s$};

    \draw[->,edge4, very thick] (1) -- (2);
    \draw[<-, fictive_edge, very thick, dashed] (1) -- (5);
    \draw[->,edge4blurred, very thick] (3) -- (4);
    \draw[<-, fictive_edge, dashed, opacity=0.3] (3) -- (8);
    \draw[<-, fictive_edge, very thick, dashed] (6) -- (2);
    \draw[->,edge4, very thick] (6) -- (5);
    \draw[<-, fictive_edge, dashed, opacity=0.3] (7) -- (5);
    \draw[->,edge4blurred, very thick] (7) -- (8);

\end{tikzpicture}
\caption{Auxiliary graph $G^b_s$ \\ \phantom{a}}
\label{fig:BipartiteMatching_iRR2}
\end{subfigure}
\begin{subfigure}{0.3\textwidth}
\centering
\begin{tikzpicture}
    \foreach \i/\name\label in {0/3, 1/2, 2/1, 3/8, 4/7, 5/6, 6/5, 7/4} {
        \node[default] (\name) at (\i*360/8:2cm) {\name};
    }
\draw[<-,edge2blurred] (3) -- (2);
\draw[<-,edge1blurred] (3) -- (1);
\draw[fictive_edge, very thick, dashed] (3) -- (8);
\draw[->,edge3blurred] (3) -- (7);
\draw[fictive_edge, very thick, dashed] (3) -- (6);
\draw[->,edge5blurred] (3) -- (5);
\draw[->,edge4, very thick] (3) -- (4);

\draw[fictive_edge, very thick, dashed] (2) -- (1); 
\draw[<-,edge5blurred] (2) -- (8);
\draw[->,edge1blurred]  (2) -- (7);
\draw[<-,edge4, very thick] (2) -- (6);
\draw[<-,edge3blurred] (2) -- (5);
\draw[fictive_edge, very thick, dashed] (2) -- (4);

\draw[<-,edge3blurred] (1) -- (8);
\draw[fictive_edge, very thick, dashed] (1) -- (7);
\draw[<-,edge5blurred] (1) -- (6);
\draw[->,edge4, very thick] (1) -- (5);
\draw[<-,edge2blurred] (1) -- (4);

\draw[<-,edge4, very thick] (8) -- (7);
\draw[<-,edge1blurred] (8) -- (6);
\draw[->,edge2blurred] (8) -- (5);
\draw[fictive_edge, very thick, dashed] (8) -- (4);

\draw[->,edge2blurred] (7) -- (6);
\draw[fictive_edge, very thick, dashed] (7) -- (5);
\draw[<-,edge5blurred] (7) -- (4);

\draw[fictive_edge, very thick, dashed] (6) -- (5);
\draw[<-,edge3blurred] (6) -- (4);

\draw[->,edge1blurred] (5) -- (4);

\end{tikzpicture}
\caption{Coloring after iPRS-B\\ \phantom{a}}
\label{fig:BipartiteMatching_iRR3}
\end{subfigure}
\end{figure}

Given a round $s \in R$, it is possible that no balanced alternating cycle in $G^b_s$ exists. \Cref{fig:i-PRS-B_no_move} shows an iRR tournament with $r=5$. Since all even teams play home and all odd teams play away, and only vertices with the same parity are connected by an uncolored edge, the graph $G^b_s$ consists only of edges colored purple. 

\begin{figure}[h!]
\centering
\caption{Graph with no balanced alternating cycle}
\label{fig:i-PRS-B_no_move}
\begin{tikzpicture}
    \foreach \i/\name\label in {0/3, 1/2, 2/1, 3/8, 4/7, 5/6, 6/5, 7/4} {
        \node[default] (\name) at (\i*360/8:2cm) {\name};
    }

  \draw[fictive_edge, very thick, dashed] (1) -- (3);
  \draw[fictive_edge, very thick, dashed] (2) -- (4);
  \draw[fictive_edge, very thick, dashed] (5) -- (7);
  \draw[fictive_edge, very thick, dashed] (6) -- (8);
  \draw[fictive_edge, very thick, dashed] (1) -- (5);
  \draw[fictive_edge, very thick, dashed] (3) -- (7);
  \draw[fictive_edge, very thick, dashed] (2) -- (6);
  \draw[fictive_edge, very thick, dashed] (4) -- (8);

  \draw[->,edge4, very thick] (1) -- (2);
  \draw[->,edge4, very thick] (3) -- (4);
  \draw[->,edge4, very thick] (5) -- (6);
  \draw[->,edge4, very thick] (7) -- (8);

  \draw[->,edge3blurred] (8) -- (1);
  \draw[->,edge3blurred] (2) -- (7);
  \draw[->,edge3blurred] (4) -- (5);
  \draw[->,edge3blurred] (6) -- (3);

  \draw[->,edge1blurred] (1) -- (4);
  \draw[->,edge1blurred] (6) -- (7);
  \draw[->,edge1blurred] (2) -- (3);
  \draw[->,edge1blurred] (5) -- (8);

  \draw[->,edge2blurred] (4) -- (6);
  \draw[->,edge2blurred] (7) -- (1);
  \draw[->,edge2blurred] (3) -- (5);
  \draw[->,edge2blurred] (8) -- (2);

  \draw[->,edge5blurred] (1) -- (6);
  \draw[->,edge5blurred] (7) -- (4);
  \draw[->,edge5blurred] (3) -- (8);
  \draw[->,edge5blurred] (5) -- (2);

\end{tikzpicture}
\end{figure}

\subsubsection{Unbalanced iPRS (iPRS-U)}

Rather than considering only balanced alternating cycles, iPRS-U starts by finding an alternating cycle that is not necessarily balanced and swaps its colored and uncolored edges, initially ignoring their home-away status. The balance is later restored by a series of PR moves. 

For a move in iPRS-U involving round $s \in R$, we take as input the graph $G^u_s(V, E_{s,-1})$, where $E_{s,-1} = \{e \in E: c(e) = -1 \ \text{or} \ c(e) = s\}$. This graph always contains an alternating cycle. Indeed, it follows from \citet{grossman1983alternating} that if both monochromatic subgraphs of a bichromatic graph are regular and nontrivial, the graph has an alternating cycle.

In \citet{bang1997alternating}, an algorithm is presented that finds the largest alternating cycle in $\mathcal{O}(|V|^3)$ time. Here, we present an $\mathcal{O}(|V|^2)$ algorithm for finding an arbitrary alternating cycle in $G^u_s$, which is a modification of the conventional DFS cycle detection algorithm. For each vertex $v \in V$, we define $N_{-1}^v = \{w \in V: c({v,w}) = -1\}$. We also arbitrarily order the set $N_{-1}^v$. Let $N_{-1,i}^v$ be the $i'{\text{th}}$ element in the set $N_{-1}^v$. Our algorithm labels the vertices as it searches for a cycle. For this purpose, define $l(v)$ to be the current label of vertex $v$ and initialize $l(v) = 0$ for all $v \in V$. Finally, we define $p$ such that $p(v)$ is the predecessor of $v$ in the DFS tree. Initially, we set $p(v)=\emptyset$ for each $v \in V$.

The algorithm works as follows. We start at an arbitrary vertex $u$ and set $l(u)=1$. At any vertex $v$, the next candidate $w$ is chosen based on the parity of $l(v)$: if $l(v)$ is even, we set $w$ equal to $o(v,s)$, and if $l(v)$ is odd, we select $w$ by iteratively going over all the vertices in $N_{-1}^v$. For a chosen candidate $w$, we evaluate three mutually exclusive cases:

\begin{enumerate}
  \item If $l(w) = 0$, set $l(w)=l(v)+1$, $p(w)=v$. 
  \item If $l(w) > 0$ and $l(w)$ is even, we have detected an alternating cycle. The cycle is formed by the path from $w$ to $v$, as defined by $p$, plus the edge $\{v,w\}$. 
  \item If $l(w) > 0$ and $l(w)$ is odd, this edge does not lead to an alternating cycle. If this is the case, we ignore $w$ and pick the next candidate in $N_{-1}^v$. 
\end{enumerate}

In case that all candidates for $v$ are exhausted, we return to $p(v)$ and reset $l(v) = 0$. In case that $p(v)=\emptyset$, we iteratively go over all vertices in $V$ that are currently unlabeled, and select that vertex as a new starting point $u$ until all vertices have been labeled. Since an alternating cycle is guaranteed to exist, the algorithm runs in $\mathcal{O}(|V| + |E_{s,-1}|) = \mathcal{O}(|V|+|V|^2)$ time.

The detected alternating cycle is in this case not necessarily balanced. In order to determine the orientation of the uncolored edges, we proceed as follows. First, we construct the sets of vertices $W^+$ and $W^-$. 
For any uncolored edge $\{i,j\}$ in the $s$-alternating cycle where $h(i,s) = h(j,s) = 1$, arbitrarily add $i$ to $W^-$ and orient $\{i,j\}$ as $(j,i)$.
Similarly, for any uncolored edge $\{i,j\}$ in the $s$-alternating cycle where $h(i,s) = h(j,s) = 0$, arbitrarily add $i$ to $W^+$ and orient $\{i,j\}$ as $(i,j)$. 
For example, if in Figure~\ref{fig:BipartiteMatching_iRR1} we swap the colored for the uncolored edges in the cycle with the edges $\{7,8\}, \{3,8\}, \{3,4\}, \{2,4\}, \{1,2\}, \{1,7\}$, we have to color the edge $\{2,4\}$, but both teams play home in R2 (purple). Similarly, we have to color edge $\{1,7\}$, but both teams play away in R2. Hence, the orientation of $\{1,7\}$ is set to (1,7), and $\{2,4\}$ is set to (4,2). Then, giving $\{3,8\}$ the orientation $(8,3)$, results in the graph shown in \Cref{fig:Matching_iRR1}. From \Cref{{thm:PR}}, we know that we can restore the balance by doing a finite number of path reversals. Hence, any move of iPRS-U is finished by a finite sequence of path reversals between vertices in $W^-$ and $W^+$, as is illustrated in \Cref{fig:Matching_iRR}. Note that any iPRS-B move is an iPRS-U move, while the reverse is not true. Hence we have that iPRS-B $\subseteq$ iPRS-U. Since there are at most $\frac{|v|}{4}$ vertices in $W^+$, and a path can be found in $\mathcal{O}(|V|+|E_c|) = \mathcal{O}(|V|^2)$ time in the match graph $G_c$, a move in this neighborhood runs in $\mathcal{O}(|V|^3)$ time.

\begin{figure}[h!]
\centering
\caption{iPRS-U}
\label{fig:Matching_iRR}
\begin{subfigure}{0.30\textwidth}
\centering
\begin{tikzpicture}
    \foreach \i/\name\label in {0/3, 1/2, 2/1, 3/8, 4/7, 5/6, 6/5, 7/4} {
        \node[default] (\name) at (\i*360/8:2cm) {\name};
    }
    \node[above=2pt of 1] {+1H};
    \node[above=2pt of 2] {+1A};
\draw[<-,edge2blurred] (3) -- (2);
\draw[<-,edge1blurred] (3) -- (1);
\draw[->,edge4, very thick] (3) -- (8);
\draw[->,edge3blurred] (3) -- (7);
\draw[fictive_edge, very thick, dashed] (3) -- (6);
\draw[->,edge5blurred] (3) -- (5);
\draw[fictive_edge, very thick, dashed] (3) -- (4);

\draw[fictive_edge, very thick, dashed] (2) -- (1); 
\draw[<-,edge5blurred] (2) -- (8);
\draw[->,edge1blurred]  (2) -- (7);
\draw[fictive_edge, very thick, dashed] (2) -- (6);
\draw[<-,edge3blurred] (2) -- (5);
\draw[->,edge4, very thick] (2) -- (4);

\draw[<-,edge3blurred] (1) -- (8);
\draw[<-,edge4, very thick] (1) -- (7);
\draw[<-,edge5blurred] (1) -- (6);
\draw[fictive_edge, very thick, dashed] (1) -- (5);
\draw[<-,edge2blurred] (1) -- (4);

\draw[fictive_edge, very thick, dashed] (8) -- (7);
\draw[<-,edge1blurred] (8) -- (6);
\draw[<-,edge2blurred] (8) -- (5);
\draw[fictive_edge, very thick, dashed] (8) -- (4);

\draw[->,edge2blurred] (7) -- (6);
\draw[fictive_edge, very thick, dashed] (7) -- (5);
\draw[->,edge5blurred] (7) -- (4);

\draw[->,edge4, very thick] (6) -- (5);
\draw[<-,edge3blurred] (6) -- (4);

\draw[<-,edge1blurred] (5) -- (4);

\end{tikzpicture}
\caption{Coloring after iPRS-U}
\label{fig:Matching_iRR1}
\end{subfigure}
\begin{subfigure}{0.30\textwidth}
\centering
\begin{tikzpicture}
    \foreach \i/\name\label in {0/3, 1/2, 2/1, 3/8, 4/7, 5/6, 6/5, 7/4} {
        \node[default] (\name) at (\i*360/8:2cm) {\name};
    }
    \node[above=2pt of 1] {+1H};
    \node[above=2pt of 2] {+1A};
\draw[<-,edge2blurred] (3) -- (2);
\draw[<-,edge1blurred] (3) -- (1);
\draw[->,edge4, very thick] (3) -- (8);
\draw[->,edge3blurred] (3) -- (7);
\draw[fictive_edge, very thick, dashed] (3) -- (6);
\draw[->,edge5blurred] (3) -- (5);
\draw[fictive_edge, very thick, dashed] (3) -- (4);

\draw[fictive_edge, very thick, dashed] (2) -- (1); 
\draw[<-,edge5blurred] (2) -- (8);
\draw[->,edge1, very thick]  (2) -- (7);
\draw[fictive_edge, very thick, dashed] (2) -- (6);
\draw[<-,edge3blurred] (2) -- (5);
\draw[->,edge4, very thick] (2) -- (4);

\draw[<-,edge3, very thick] (1) -- (8);
\draw[<-,edge4, very thick] (1) -- (7);
\draw[<-,edge5blurred] (1) -- (6);
\draw[fictive_edge, very thick, dashed] (1) -- (5);
\draw[<-,edge2blurred] (1) -- (4);

\draw[fictive_edge, very thick, dashed] (8) -- (7);
\draw[<-,edge1, very thick] (8) -- (6);
\draw[<-,edge2blurred] (8) -- (5);
\draw[fictive_edge, very thick, dashed] (8) -- (4);

\draw[->,edge2, very thick] (7) -- (6);
\draw[fictive_edge, very thick, dashed] (7) -- (5);
\draw[->,edge5blurred] (7) -- (4);

\draw[->,edge4, very thick] (6) -- (5);
\draw[<-,edge3blurred] (6) -- (4);

\draw[<-,edge1blurred] (5) -- (4);

\end{tikzpicture}
\caption{Path (2,7,6,8,1) from 2 to 1}
\label{fig:Matching_iRR2}
\end{subfigure}
\begin{subfigure}{0.30\textwidth}
\centering
\begin{tikzpicture}
    \foreach \i/\name\label in {0/3, 1/2, 2/1, 3/8, 4/7, 5/6, 6/5, 7/4} {
        \node[default] (\name) at (\i*360/8:2cm) {\name};
    }
    \node[above=2pt of 1] {+0H};
    \node[above=2pt of 2] {+0A};
\draw[<-,edge2blurred] (3) -- (2);
\draw[<-,edge1blurred] (3) -- (1);
\draw[->,edge4, very thick] (3) -- (8);
\draw[->,edge3blurred] (3) -- (7);
\draw[fictive_edge, very thick, dashed] (3) -- (6);
\draw[->,edge5blurred] (3) -- (5);
\draw[fictive_edge, very thick, dashed] (3) -- (4);

\draw[fictive_edge, very thick, dashed] (2) -- (1); 
\draw[<-,edge5blurred] (2) -- (8);
\draw[<-,edge1, very thick]  (2) -- (7);
\draw[fictive_edge, very thick, dashed] (2) -- (6);
\draw[<-,edge3blurred] (2) -- (5);
\draw[->,edge4, very thick] (2) -- (4);

\draw[->,edge3, very thick] (1) -- (8);
\draw[<-,edge4, very thick] (1) -- (7);
\draw[<-,edge5blurred] (1) -- (6);
\draw[fictive_edge, very thick, dashed] (1) -- (5);
\draw[<-,edge2blurred] (1) -- (4);

\draw[fictive_edge, very thick, dashed] (8) -- (7);
\draw[->,edge1, very thick] (8) -- (6);
\draw[<-,edge2blurred] (8) -- (5);
\draw[fictive_edge, very thick, dashed] (8) -- (4);

\draw[<-,edge2, very thick] (7) -- (6);
\draw[fictive_edge, very thick, dashed] (7) -- (5);
\draw[->,edge5blurred] (7) -- (4);

\draw[->,edge4, very thick] (6) -- (5);
\draw[<-,edge3blurred] (6) -- (4);

\draw[<-,edge1blurred] (5) -- (4);

\end{tikzpicture}
\caption{Orientation after path reversal}
\label{fig:Matching_iRR3}
\end{subfigure}
\end{figure}


 
\section{Connectivity results}\label{sec:Connectivity}

In this section, we present our theoretical results concerning the connectivity of the neighborhoods.


\subsection{Cycle and Path Reversal}\label{sec:CR_PR_theory}

In \citet{knust2006balanced}, it is shown that CR is orientation-wise connected for RR tournaments with $r$ even ($n$ odd), while PR is orientation-wise connected if $r$ is odd ($n$ even). We generalize this result to the iRR context in the following theorem. 

\begin{theorem}\label{thm:ConnectivityCR}
  CR is orientation-wise connected for iRR tournaments in case that $r$ is even and CR+PR is orientation-wise connected for iRR tournaments in case that $r$ is odd.
\end{theorem}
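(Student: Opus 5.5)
We fix the undirected match graph $G_c$ and two balanced orientations $h$ and $h'$ of its edge set $E_c$. We want to transform $h$ into $h'$ by the allowed moves, keeping every intermediate orientation balanced. The natural object is the set $D = \{e \in E_c : h(e) \neq h'(e)\}$ of edges on which the two orientations disagree, with each edge of $D$ directed as in $h$. Reversing all of $D$ turns $h$ into $h'$. The plan is to decompose $D$ into pieces whose individual reversals are legal moves and keep the timetable balanced, and then reverse the pieces one at a time, so that $|D|$ strictly decreases.

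\emph{Case $r$ even.} Here both $h$ and $h'$ satisfy $\Delta(v)=0$ for every $v$, so each vertex has equal in- and out-degree under both. For a vertex $v$, the edges of $D$ at $v$ are exactly those whose direction changes. Hence the number of $D$-edges entering $v$ (in $h$) equals the number leaving $v$, otherwise $\Delta(v)$ would differ between $h$ and $h'$. So $D$, directed by $h$, is an Eulerian digraph. By the standard argument already used for Lemma~\ref{lemma:PR1} (walk along unused out-arcs until a vertex repeats), $D$ decomposes into arc-disjoint balanced cycles. Each such cycle is a balanced cycle of the current match graph. Reversing it is a CR move that preserves every $\Delta(v)$, so balance is kept. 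After the reversal the cycle's edges agree with $h'$, and the remaining part of $D$ is still Eulerian. Induction on $|D|$ finishes this case.

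\emph{Case $r$ odd.} Now $\Delta_h(v), \Delta_{h'}(v) \in \{-1,1\}$, so at each vertex the in- and out-degree of $D$ differ by $0$ or $1$. If they differ by $1$, then $\Delta$ changes from $\pm1$ to $\mp1$ at that vertex. Let $A$ be the set of vertices where $D$ has one more out-arc than in-arcs; these are the vertices with $\Delta_h=-1$ and $\Delta_{h'}=+1$. Let $B$ be the set with one more in-arc; these have $\Delta_h=+1$ and $\Delta_{h'}=-1$. Summing degrees gives $|A|=|B|$.

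Since $D$ has imbalance only at $A \cup B$, we decompose it, as in Lemma~\ref{lemma:PR1}, into balanced cycles plus $|A|$ arc-disjoint paths, each from a vertex of $A$ to a vertex of $B$. To do this, start walks at vertices of $A$ along unused $D$-arcs. Such a walk can only get stuck at a vertex of $B$. Then strip out cycles from the walk as in the proof of Lemma~\ref{lemma:PR1}. What remains after removing the paths is Eulerian and splits into cycles.

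We first reverse all the cycles by CR, which keeps balance. Then we reverse each path $P$ from $a\in A$ to $b\in B$ by a PR move. Reversing $P$ changes only $\Delta(a)$, from $-1$ to $+1$, and $\Delta(b)$, from $+1$ to $-1$, so every intermediate orientation stays balanced. The paths are arc-disjoint, and each endpoint in $A$ or $B$ is the end of exactly one path, because at such a vertex $D$ has imbalance exactly $1$. Therefore each vertex's $\Delta$ is flipped at most once and always stays in $\{-1,1\}$.

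The step I expect to need most care is the exact degree bookkeeping in the odd case. We must check that the $D$-imbalance at each vertex is in $\{-1,0,1\}$ and equals exactly the change in $\Delta$. We must also check that the peeling really produces paths, not just walks, with distinct endpoints so that no vertex is pushed to $|\Delta|=3$. The order of operations matters here: either reverse all cycles first, or note that cycle reversals never affect $\Delta$, so they can be interleaved freely. Finally, we must check that CR and PR are applied to balanced cycles and paths of the current match graph. This holds because every arc of $D$ still carries its $h$-orientation until it is reversed.
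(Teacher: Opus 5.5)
Your proposal is correct and follows essentially the same route as the paper. You take the disagreement set $D$ and, when $r$ is even, peel it into directed cycles reversed by CR. When $r$ is odd, you reverse paths in $D$ from out-excess to in-excess vertices by PR and the Eulerian remainder by CR. The only differences are cosmetic: you decompose $D$ up front and reverse the cycles before the paths, whereas the paper reverses paths iteratively first and then the remaining cycles. You also check explicitly that each vertex is an endpoint of at most one path, so every intermediate orientation stays balanced, a point the paper leaves implicit.
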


\begin{proof}
  Let $h$ and $h'$ be two balanced orientations of the same match graph $G_c$, resulting in the directed graphs $G_c(V,A)$ and $G^{'}_c(V,A')$, respectively. Since moves in CR and PR do not modify the coloring, we ignore the colors of the edges. Let $D = A \setminus A'$ represent the set of arcs in $A$ that have a different orientation in $A'$. We now consider two cases.

  \emph{Case 1: $r$ is even}. Since $h$ and $h'$ are balanced and $r$ is even, for any vertex $v \in V$ it holds that if there is an incoming arc for $v$ in $h$ that is outgoing in $h'$, then there exists another arc that is incoming for $v$ in $h'$ but outgoing in $h$. Similarly, if there is an outgoing arc for $v$ in $h$ that is incoming for $v$ in $h'$, then there must exist another arc that is outgoing for $v$ in $h'$ but incoming in $h$. Hence, any vertex that is incident to an arc in $G(V,D)$ has an outdegree of at least 1, so we can find an arbitrary cycle $C$ in $G(V,D)$ (see e.g.\ \citet{chvatal1983short}) and reverse the orientation of the arcs of $C$ in $G_c$. Then, the orientation of the arcs in $C$ is the same as the orientation of the arcs in $G_c^{'}$, and we delete these arcs from $D$. By iteratively taking one of the balanced cycles in $G(V,D)$ and reversing its orientation, $A$ can be transformed into $A'$. 


  \emph{Case 2: $r$ is odd}.
  Let $\Delta_{G_c}(v), \Delta_{G_c^{'}}(v)$ and $\Delta_{G(V,D)}(v)$ be the difference between the in and outdegree of vertex $v$ in the graphs $G_c, G_c^{'}$ and $G(V,D)$, respectively. If $\Delta_{G}(v) = \Delta_{G'}(v)$, then $\Delta_{G(V,D)}(v) = 0$ for $v \in V$. If this holds for every vertex, $G(V,D)$ can be decomposed into directed cycles and we can repeat the argument of case 1. So suppose that there is some vertex $v$ with $\Delta_{G}(v) \neq \Delta_{G'}(v)$. If $\Delta_{G}(v) > \Delta_{G'}(v)$, then $\Delta_{G(V,D)}(v) = 1$, and if $\Delta_{G}(v) < \Delta_{G'}(v)$, then $\Delta_{G(V,D)}(v) = -1$ (since $h$ and $h'$ are balanced and $r$ is odd). Similar to the proof in Lemma \ref{lemma:PR1}, we know that in $G(V,D)$ there exists a path between a vertex with $\Delta_{G(V,D)}(v) \geq 1$ and a vertex with $\Delta_{G(V,D)}(v) \leq -1$. If we reverse this path, all edges of the path have the same orientation as in $h^{'}$, and we can delete these edges from $D$. We can repeat this until either $D$ is empty or until $\Delta_{G(V,D)}(v) = 0$ for every vertex $v \in V$, in which case we can repeat the argument of case 1.
\end{proof}


For RR tournaments, connectivity can be obtained by considering cycle reversals of length 3 only \citep{knust2006balanced}. For iRR tournaments, cycles with at least $n$ edges can be required, e.g.\ if the match graph is a Hamiltonian cycle. 

\subsection{iPTS}

Perfect one-factorizations play a central role in the connectivity of RR neighborhoods since they serve as a counterexample to prove that PRS and PTS are color-wise disconnected \citep{januario2016new}. In contrast, when $r \leq n-3$, there exists a perfect one-factorization from which iPTS can escape to a non-perfect one-factorization. This is illustrated in \Cref{fig:EscapePerfectOneFactorization}.

\begin{figure}[!h]
  \centering
  \caption{Illustration of iPTS (iPRS) escaping from perfect one-factorization}
\label{fig:EscapePerfectOneFactorization}
  \begin{subfigure}{0.45\textwidth}
\centering
  \begin{tikzpicture}
  \foreach \i/\name\label in {0/3, 1/2, 2/1, 3/8, 4/7, 5/6, 6/5, 7/4} {
        \node[default] (\name) at (\i*360/8:2cm) {\name};
    }

    \draw[<-,edge4, very thick] (1) -- (8);
    \draw[<-,edge4, very thick] (2) -- (7);
    \draw[->,edge4, very thick] (3) -- (6);
    \draw[<-,edge4, very thick] (4) -- (5);

    \draw[->,edge5blurred] (2) -- (8);
    \draw[->,edge5blurred] (1) -- (3);
    \draw[->,edge5blurred] (4) -- (7);
    \draw[<-,edge5blurred] (5) -- (6);

    \draw[<-,edge1blurred] (4) -- (8);
    \draw[<-,edge1blurred] (3) -- (5);
    \draw[<-,edge1blurred] (2) -- (6);
    \draw[<-,edge1blurred] (7) -- (1);

    \draw[->,edge3blurred] (5) -- (8);
    \draw[->,edge3blurred] (4) -- (6);
    \draw[<-,edge3blurred] (3) -- (7);
    \draw[<-,edge3blurred] (1) -- (2);

    \draw[<-,edge2blurred] (7) -- (8);
    \draw[<-,edge2blurred] (1) -- (6);
    \draw[<-,edge2blurred] (2) -- (5);
    \draw[<-,edge2blurred] (3) -- (4);

  \draw[fictive_edge, dashed, very thick] (1) -- (4);
  \draw[fictive_edge, dashed, very thick] (1) -- (5);
  \draw[fictive_edge, dashed, very thick] (2) -- (3);
  \draw[fictive_edge, dashed, very thick] (2) -- (4);
  \draw[fictive_edge, dashed, very thick] (3) -- (8);
  \draw[fictive_edge, dashed, very thick] (5) -- (7);
  \draw[fictive_edge, dashed, very thick] (6) -- (8);
  \draw[fictive_edge, dashed, very thick] (6) -- (7);

  \end{tikzpicture}
  \caption{Initial perfect one-factorization,\\ obtained with the circle method 
  }
  \label{fig:EscapePerfectOneFactorization1}
\end{subfigure}
\begin{subfigure}{0.45\textwidth}
\centering
  \begin{tikzpicture}
  \foreach \i/\name\label in {0/3, 1/2, 2/1, 3/8, 4/7, 5/6, 6/5, 7/4} {
        \node[default] (\name) at (\i*360/8:2cm) {\name};
    }

    \draw[<-,edge4, very thick] (1) -- (8);
    \draw[fictive_edge, dashed, very thick] (2) -- (7);
    \draw[fictive_edge, dashed, very thick] (3) -- (6);
    \draw[<-,edge4, very thick] (4) -- (5);

    \draw[->,edge3, very thick] (2) -- (8);
    \draw[->,edge3, very thick] (1) -- (3);
    \draw[->,edge3, very thick] (4) -- (7);
    \draw[<-,edge3, very thick] (5) -- (6);

    \draw[<-,edge1blurred] (4) -- (8);
    \draw[<-,edge1blurred] (3) -- (5);
    \draw[<-,edge1blurred] (2) -- (6);
    \draw[<-,edge1blurred] (7) -- (1);

    \draw[->,edge5blurred] (5) -- (8);
    \draw[->,edge5blurred] (4) -- (6);
    \draw[<-,edge5blurred] (3) -- (7);
    \draw[<-,edge5blurred] (1) -- (2);

    \draw[<-,edge2blurred] (7) -- (8);
    \draw[<-,edge2blurred] (1) -- (6);
    \draw[<-,edge2blurred] (2) -- (5);
    \draw[<-,edge2blurred] (3) -- (4);
    
  \draw[fictive_edge, dashed, very thick] (1) -- (4);
  \draw[fictive_edge, dashed, very thick] (1) -- (5);
  \draw[<-,edge4, very thick] (2) -- (3);
  \draw[fictive_edge, dashed, very thick] (2) -- (4);
  \draw[fictive_edge, dashed, very thick] (3) -- (8);
  \draw[fictive_edge, dashed, very thick] (5) -- (7);
  \draw[fictive_edge, dashed, very thick] (6) -- (8);
  \draw[<-,edge4, very thick] (6) -- (7);

  \end{tikzpicture}
  \caption{Resulting non-perfect one-factorization (the union of green and purple edges form two cycles)}
  \label{fig:EscapePerfectOneFactorization2}
\end{subfigure}
\end{figure}

In \Cref{fig:EscapePerfectOneFactorization1}, an iPTS move on the incomplete lantarn with the edges $\{2,7\},\{6,7\},\{2,3\},\{3,6\}$ results in a one-factorization is not perfect anymore (see \Cref{fig:EscapePerfectOneFactorization2}). Hence, iPTS increases the solution space connectivity over the existing RR neighborhood structures. However, if $r=n-2$, we have the following result:

\begin{theorem}
  iPTS is color-wise disconnected when $r=n-2$.
\end{theorem}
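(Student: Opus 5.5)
The plan is to reduce iPTS for $r=n-2$ to the classical PTS neighborhood on a complete round robin, and then use the known fact that PTS cannot leave the isomorphism class of a perfect one-factorization.

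\emph{Step 1: the uncolored edges form a perfect matching.} If $r=n-2$, every vertex of $K_n$ has exactly $n-2$ colored edges. So every vertex is incident to exactly one uncolored edge, and the uncolored edges form a perfect matching $M$. I will give the edges of $M$ a fictive color $f=n-1$. This turns every feasible iRR coloring with $r=n-2$ into a one-factorization $\mathcal{F}^f$ of $K_n$, that is, a RR timetable with $n-1$ rounds. Conversely, deleting any one color class of a one-factorization of $K_n$ gives an iRR coloring with $r=n-2$. Orientations play no role in color-wise connectivity, so I will ignore them, together with the path reversals that complete an iPTS move.

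\emph{Step 2: an iPTS move is a PTS move in the completed factorization.} Take an incomplete lantern with teams $i,j$ and uncolored edges $\{i,w^i\}$, $\{j,w^j\}$. Both edges lie in $M$, so both get color $f$ in $\mathcal{F}^f$. With $f$ treated as a genuine color, each $w\in W$ is joined to $i$ and to $j$ by edges whose color sets coincide: $R'\cup\{f\}$ on both sides. Hence the incomplete lantern is a colorful chordless lantern of $\mathcal{F}^f$ in the sense of \Cref{sec:PTS}. I will check that the chain produced by the recurrences \eqref{eq:psi_j} and \eqref{eq:psi_i} hitting $-1$ corresponds exactly to the closed color chain passing through $f$. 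The iPTS move swaps the colors of the two edges at every $w\in W$. It therefore uncolors $\{i,w^j\}$ and $\{j,w^i\}$, which become the new $f$-edges, and this is precisely the PTS move on that lantern in $\mathcal{F}^f$. Colorful chordless lanterns in the iRR graph are already PTS lanterns avoiding $f$. So every iPTS move, read through the completion map, is a PTS move on a one-factorization of $K_n$.

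\emph{Step 3: invariance and the obstruction.} Start from an iRR coloring whose completion $\mathcal{F}^f$ is a perfect one-factorization of $K_n$. Such colorings exist: \Cref{fig:EscapePerfectOneFactorization1} with one extra round added gives $n=8$, and perfect one-factorizations exist for $n=p+1$ with $p$ prime. In a perfect one-factorization the union of any two color classes is a Hamiltonian cycle. This forces every colorful chordless lantern to use all of $W=V\setminus\{i,j\}$. A PTS move on such a lantern exchanges the opponents of $i$ and $j$ in every round except the one in which they meet, so it is exactly a team swap of $i$ and $j$. This is the argument behind the result of \citet{januario2016new} that PTS is disconnected on perfect one-factorizations, and I would cite it or reprove it in this form. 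A team swap yields an isomorphic, hence again perfect, one-factorization. By induction, every coloring reachable by iPTS has a completion isomorphic to the original perfect one-factorization. It remains to exhibit a reachable-in-principle target with a non-perfect completion, for instance a completion containing a $4$-cycle of two colors. No such target can be reached, so iPTS is color-wise disconnected.

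The main obstacle is Step 2. I must verify carefully that the lantern found by the construction of \Cref{subsec:lanterns}, including the degenerate cases, is always a minimal colorful chordless lantern of $\mathcal{F}^f$ and not something larger. I also need the claim that in a perfect factorization such a lantern is forced to have $|W|=n-2$. This follows because the colors of a minimal lantern form one cycle of the permutation $a\mapsto c(\{j,o(i,a)\})$. The colors $a$ and $\sigma(a)$ together trace an alternating path $i,w,j,\dots$ inside their two-color Hamiltonian cycle. Making this forcing precise without invoking extra structure is where I expect most of the care to be needed.
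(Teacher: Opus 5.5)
\textbf{There is a genuine gap: the key claim in Step 3 is false.} Your Steps 1 and 2 are sound and match the paper's proof. Completing the uncolored one-factor to a fictive color turns every iPTS move into a PTS move on a one-factorization of $K_n$. The paper then asserts, as you do, that on a perfect completion such a move is a team swap. That is the step that fails, in your version and in the paper's.

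Write $\sigma_{ij}(a)=c(\{j,o(i,a)\})$ on the colors other than $c(\{i,j\})$. Lanterns between $i$ and $j$ are exactly the $\sigma_{ij}$-invariant color sets. Perfection only excludes $2$-cycles of $\sigma_{ij}$: a $2$-cycle $a\leftrightarrow b$ would give the bichromatic $4$-cycle $(i,o(i,a),j,o(i,b))$. It does not exclude longer proper cycles, and your alternating-path sketch gives nothing more.

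Concretely, let $GK_8$ live on $\{\infty\}\cup\mathbb{Z}_7$ with $c(\{\infty,x\})=x$ and $c(\{x,y\})=4(x+y) \bmod 7$; it is perfect.
\begin{itemize}
\item For $i=\infty$, $j=0$ one gets $\sigma_{ij}(a)=4a$, with cycles $(1\,4\,2)$ and $(3\,5\,6)$. So $N^{\infty}_{\{1,2,4\}}=N^{0}_{\{1,2,4\}}=\{1,2,4\}$ is a lantern with $|W|=3<n-2$.
\item Uncolor class $1$. Up to isomorphism, this is exactly the $n=8$, $r=6$ starting point you propose, since the affine maps $x\mapsto ax+b$ act transitively on the classes.
\item The iPTS move with input $i=\infty$, $j=0$, $s=2$ builds the incomplete lantern on $W=\{1,2,4\}$, with uncolored edges $\{\infty,1\}$ and $\{0,2\}$. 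After the move, $\{\infty,1\}$ has color $4$.
\item Since $\{\infty,3\}$ and $\{1,5\}$ have color $3$ and $\{3,5\}$ has color $4$, colors $3$ and $4$ now contain the $4$-cycle $(\infty,3,5,1)$.
\end{itemize}
So the new completion is not perfect, and the invariant your induction relies on is not preserved.

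\textbf{How to repair it.} You need a stronger, isomorphism-invariant hypothesis on the completion: $\sigma_{ij}$ is a single $(n-2)$-cycle for every pair $i\neq j$. Under it, every lantern is full, so every PTS move, hence every iPTS move, is a team swap. Team swaps preserve the hypothesis, so the isomorphism class of the completion is frozen. Any target whose completion is non-isomorphic is then unreachable.

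$GK_{p+1}$ has this property exactly when $2$ is a primitive root modulo $p$. For $i,j\in\mathbb{Z}_p$, $\sigma_{ij}$ is always one cycle. For pairs $\{\infty,d\}$, $\sigma_{\infty d}$ is conjugate to multiplication by $2^{-1}$ on $\mathbb{Z}_p^{*}$. This proves the statement for, e.g., $n=12,14,20$.

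It does not settle general $n$, and in particular not $n=8$, where the natural perfect candidate $GK_8$ fails. Neither your perfection-based argument nor the paper's one-line proof covers those cases.
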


\begin{proof}
  In case $r=n-2$, the uncolored edges form a one-factor $F'$. Hence, in case the union of the set of colored one factors and $F'$ is perfect, iPTS is equivalent to TS and thus only produces other perfect one-factorizations.
\end{proof}

\subsection{iPRS}\label{sec:iPRS-Connectivity}

For iPRS, we have seen two possible ways to guarantee that the match graph remains balanced: either we do iPRS-B or iPRS-U. Naturally, iPRS-B cannot be fully connected since it does not change the orientation of the matches. A natural question is whether iPRS-B, when complemented with CR, is connected.

\begin{theorem}\label{thm:HC}
  iPRS-B+CR is color-wise disconnected when $r=2$.
\end{theorem}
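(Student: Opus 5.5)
We exhibit a feasible timetable with $r=2$ from which iPRS-B and CR moves can never leave a small set of colorings, and show that some other feasible 2-coloring lies outside that set. For $r=2$ the match graph $G_c$ is a 2-regular graph whose components are even cycles alternating colors $1$ and $2$. Balancedness ($r$ even) means every vertex has in- and outdegree $1$. So every component of $G_c$ is a balanced cycle: each vertex is home in exactly one of its two games.

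The candidate starting configuration is a Hamiltonian match graph. Take $G_c$ to be a single balanced Hamiltonian cycle $(v_1,\dots,v_n,v_1)$ with arcs $(v_k,v_{k+1})$ and colors alternating along the cycle. Then in round $1$ (edges $\{v_{2m-1},v_{2m}\}$) the odd-indexed vertices have one home-away status and the even-indexed vertices the other. In round $2$ (edges $\{v_{2m},v_{2m+1}\}$) the roles are reversed, so the even-indexed vertices now hold the status the odd ones had in round $1$. Since the orientation is consistent around the whole cycle, the split of $V$ into $V^h_s$ and $V^a_s$ is, for both $s=1,2$, exactly the split into odd- and even-indexed vertices.

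The key step is to show that $G^b_s$ contains no cycle. A cycle in $G^b_s$ uses uncolored edges between $V^h_s$ and $V^a_s$, that is, between an odd- and an even-indexed vertex. Any such edge is a chord $\{v_a,v_b\}$ with $a-b$ odd that is not a cycle edge. For $n\ge 6$ such chords clearly exist, for example $\{v_1,v_4\}$, so this naive configuration does not work. The fix is to choose the orientation and labeling so that every uncolored edge joining opposite statuses in round $s$ is useless. This is where Hamiltonicity and the balanced-cycle orientation must be exploited more carefully.

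The plan is to identify a different invariant instead of relying on the absence of cycles. An iPRS-B move on round $s$ preserves every team's home-away status in round $s$ and leaves the other round untouched. Hence the vector $(h(v,1),h(v,2))_{v}$ is invariant under iPRS-B, and CR changes it only along balanced cycles. With $r=2$, every vertex has $h(v,1)\neq h(v,2)$, so the invariant reduces to the set $H=\{v: h(v,1)=1\}$. The uncolored-versus-colored structure forces every colored edge of round $1$ to join $H$ to $V\setminus H$, and likewise for round $2$. Thus $G_c$ is always a bipartite subgraph of $K_{H,V\setminus H}$.

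A CR move reverses an entire component cycle, which swaps $H$ with its complement on that component only. I would show that the resulting family of reachable bipartitions does not exhaust all possible match graphs. It then remains to exhibit a feasible target coloring whose components cannot be simultaneously 2-colored by any reachable bipartition $H$. For instance, start from a Hamiltonian $G_c$ that stays connected under every iPRS-B move, and let the target be the coloring with one component containing an edge inside the original $H$. The main obstacle is proving that components remain large enough, so that CR cannot effectively re-bipartition; this step is what I would check first.
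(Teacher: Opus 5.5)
Your reduction is sound as far as it goes: iPRS-B fixes $H=\{v:h(v,1)=1\}$, both rounds always lie in $K_{H,V\setminus H}$, and a CR move replaces $H$ by $H\triangle K$ for a component $K$ of $G_c$. The gap is the step you postpone, and it fails for every $n\ge 8$, because iPRS-B can split a Hamiltonian match graph and CR then re-bipartitions.

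Take $n=8$ with your starting cycle, round~1 $=\{v_1v_2,v_3v_4,v_5v_6,v_7v_8\}$, and odd teams home in round~1. The cycle $(v_2,v_1,v_4,v_3,v_6,v_5,v_8,v_7,v_2)$ is a balanced $1$-alternating cycle. Swapping it gives round~1 $=\{v_1v_4,v_3v_6,v_5v_8,v_2v_7\}$, and $G_c$ becomes two $4$-cycles, on $\{v_1,v_4,v_5,v_8\}$ and $\{v_2,v_3,v_6,v_7\}$. Reversing the first (a legal CR) makes $v_1$ away and $v_4$ home in round~1, while $v_3$ stays home and $v_6$ away. So $(v_1,v_4,v_6,v_3,v_1)$ is a balanced $1$-alternating cycle, and the next iPRS-B move schedules $\{v_1,v_3\}$, a game inside the original $H$.

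This is not an artifact of the example. Fix $H$ with $n\ge 8$. Any pair $(M_1^*,M_2^*)$ of disjoint perfect matchings of $K_{H,V\setminus H}$ is reachable from $(M_1,M_2)$ via $(N,M_2)$ and $(N,M_2^*)$. Here $N$ is a perfect matching of $K_{H,V\setminus H}\setminus(M_2\cup M_2^*)$; it exists by Hall's theorem, since all degrees are at least $\frac{n}{2}-2\ge\frac{n}{4}$. Each replacement is a sequence of iPRS-B moves, one per cycle of the symmetric difference, and each such cycle is balanced because its new edges join $H$ to $V\setminus H$. Hence you can make any $a,c\in H$, $b,d\notin H$ into a $4$-cycle component and reverse it, exchanging $\{a,c\}$ with $\{b,d\}$. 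Exchanging $\{a,c\}\leftrightarrow\{b,d\}$ and then $\{b,d\}\leftrightarrow\{a,y\}$ yields $(H\setminus\{c\})\cup\{y\}$. So every $H$ with $|H|=\frac{n}{2}$ is reachable, and therefore every $2$-coloring of $K_n$, since each one lies in some $K_{H^*,V\setminus H^*}$. Thus for $r=2$ and $n\ge 8$, iPRS-B+CR is actually color-wise connected. No Hamiltonian start stays connected, and no invariant can complete your plan there.

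What your argument does establish is the case $n\in\{4,6\}$. A simple $2$-regular bipartite graph on at most six vertices is a single cycle, so CR can only replace $H$ by $V\setminus H$. The bipartition $\{H,V\setminus H\}$ is then invariant, and a coloring scheduling $\{v_1,v_3\}$ is unreachable. This proves the theorem only in the weak sense that connectivity fails for some instances with $r=2$.

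The paper's proof uses the same Hamiltonian start and asserts that after any number of cycle reversals all odd teams share a home--away status. That silently assumes $G_c$ remains the single Hamiltonian cycle, which is exactly the step you flagged. So the paper's proof has the same hole and is valid only for $n\le 6$.
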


\begin{proof}
  Consider an iRR tournament where the match graph is the Hamiltonian cycle $(v_1,v_2,\dots,v_n,v_1)$ such that $c(\{v_i,v_{i+1}\}) = 1$ for odd $i$ and $c(\{v_i,v_{i+1}\}) = 2$ for even $i$. Moreover, let all $v_i$ with $i$ odd play home in round 1 and away in round 2 and all $v_i$ with $i$ even play away in round 1 and home in round 2. After any number of cycle reversals, we still have that all odd teams have the same home-away status in every round and that all even teams have the same home-away status in every round. As a result, teams of the same parity never have opposite home-away status, and hence any iPRS-B only matches teams with different parity. 
  \end{proof}



In the following, we will show that iPRS-U is fully connected in case that $r \leq \frac{n}{2}$. In order to do this, we first make a connection with perfect matchings. Recall that $G_s^u$ denotes the subgraph of a proper partially colored graph $G$ consisting of all uncolored edges and all edges colored $s$.

\begin{lemma}\label{lemma:iPRS-BM}
  Given a perfect matching $M'$ in $G^u_s$, there exists a sequence of iPRS-U moves so that round $s$ becomes identical to $M'$.
\end{lemma}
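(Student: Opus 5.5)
Let $M$ denote the current one-factor of color $s$, and let $M'$ be the target perfect matching in $G^u_s$; every edge of $M'$ is either colored $s$ or uncolored. The plan is the classical symmetric-difference argument for perfect matchings. Consider $M \triangle M'$. Since both $M$ and $M'$ are perfect matchings on $V$, every vertex is incident to either zero or two edges of $M \triangle M'$: one from $M \setminus M'$ and one from $M' \setminus M$. Hence $M \triangle M'$ is a disjoint union of even cycles whose edges alternate between $M\setminus M'$ and $M'\setminus M$. Each edge of $M\setminus M'$ is colored $s$. Each edge of $M'\setminus M$ lies in $G^u_s$ but is not colored $s$, so it is uncolored. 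Thus every component of $M\triangle M'$ is an $s$-alternating cycle in the sense of the paper, of length at least $4$.

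Next, we apply one iPRS-U move per cycle. For a component cycle $Z$, we swap its colored and uncolored edges. The edges of $Z\cap M$ become uncolored and the edges of $Z\cap M'$ receive color $s$, with orientations chosen as in the iPRS-U description. The balance is then restored by path reversals, using Theorem~\ref{thm:PR}. Three points need checking.

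\begin{enumerate}
\item The result is again a proper $r$-regular coloring. Each vertex of $Z$ loses exactly one $s$-edge and gains exactly one. No edge colored with a color other than $s$ is touched, so no color conflicts or double colorings arise.
\item The path reversals only change orientations, never colors. So after the move the coloring in round $s$ equals $(M\setminus Z)\cup(Z\cap M')$, and all other rounds are unchanged.
\item The remaining components of $M\triangle M'$ are vertex-disjoint from $Z$. They therefore stay alternating cycles with respect to the new round $s$, because their edges' colors are unchanged.
\end{enumerate}

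Iterating over all components in turn, the symmetric difference shrinks to empty after finitely many moves. At that point round $s$ equals $M'$. If $M=M'$ initially, the empty sequence suffices.

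The only delicate point is making sure the balancing step inside an iPRS-U move does not interfere with the argument. This is fine because path reversals preserve the coloring, so they do not affect the edge sets tracked in the induction. One also needs each $Z$ to be an admissible input cycle for iPRS-U. The move accepts any $s$-alternating cycle, not necessarily one produced by the DFS procedure, so $Z$ qualifies.
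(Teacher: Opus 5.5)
Your proof is correct and follows the same route as the paper: you decompose the symmetric difference of $M$ and $M'$ into $s$-alternating cycles and perform one iPRS-U swap per cycle. Your extra checks (degree $0$ or $2$ rather than the paper's blanket ``degree 2'', properness after each swap, path reversals leaving colors untouched, and the remaining components staying alternating) make explicit what the paper's short argument leaves implicit.
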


\begin{proof}
  First, let $M = \{e \in E: c(e) = s\}$. Given $M'$, which differs in at least one edge from $M$, consider the subgraph $G^u_s$ induced by the edges $E'$ that lie in the symmetric difference between $M$ and $M'$, i.e.\ $E' = ((M \setminus M') \cup (M' \setminus M))$. Every vertex in this graph must necessarily have degree 2. Therefore, $G^u_s$ is a collection of $s$-alternating cycles. Indeed, in any cycle, the edges must alternate between an edge colored $s$ in $M$ and an uncolored edge in  $M'$. 
  Hence, $M'$ can be obtained from $M$ by swapping the colored and the uncolored edges in each cycle, corresponding to a sequence of iPRS-U moves in $G^u_s$.
\end{proof}

We now recall a result from vertex coloring, which as we will show allows us to say something about the existence of a perfect matching in a graph. 

\begin{lemma}{\citet{chen1994equitable}}\label{lemma:chen}
    Let $G$ be a connected graph with maximum degree $k \geq \frac{n}{2}$. If $G$ is different from $K_n$ and in case $\frac{n}{2}$ odd different from $K_{\frac{n}{2}, \frac{n}{2}}$, then $G$ is equitable $k$-vertex colorable.
\end{lemma}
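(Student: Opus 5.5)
Because $k\ge \frac{n}{2}$, I would turn the question into a matching problem in the complement graph $\overline{G}$. In an equitable $k$-coloring every color class has size $\lfloor n/k\rfloor$ or $\lceil n/k\rceil$, and both are at most $2$. So such a coloring is a partition of $V$ into $k$ independent sets of size $1$ or $2$. Counting vertices, there are exactly $n-k$ classes of size two and $2k-n$ singletons. A pair of vertices forms an independent set in $G$ exactly when it is an edge of $\overline{G}$. Hence $G$ is equitably $k$-colorable if and only if $\overline{G}$ has a matching of size $n-k$. (If $\Delta(G)=k$ is very large, $n-k$ is small and the statement becomes easier.) The whole proof then reduces to showing $\nu(\overline{G})\ge n-k$.

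The key steps, in order, are as follows.

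\emph{Step 1 (easy bound).} Every vertex of $\overline{G}$ has degree at least $d:=n-1-k$. A standard augmenting-path or greedy argument shows that any graph with minimum degree $d$ has a matching of size at least $\min\{d,\lfloor n/2\rfloor\}$. Since $n-k\le \frac{n}{2}$, this gives $\nu(\overline{G})\ge n-k-1$, which is exactly one short of what is needed.

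\emph{Step 2 (extra edge).} Suppose, for contradiction, that a maximum matching $M$ of $\overline{G}$ has exactly $d=n-1-k$ edges. Consider the set $U$ of unmatched vertices, with $|U|=n-2d=2k-n+2\ge 2$. The set $U$ is independent in $\overline{G}$, so it is a clique in $G$. Every $\overline{G}$-neighbour of a vertex of $U$ lies in $V(M)$. I would then run the usual exchange argument: for two vertices $u,u'\in U$ and a matching edge $xy$, if $u\sim x$ and $u'\sim y$ in $\overline{G}$, we get an augmenting path. Each $u\in U$ has at least $d$ neighbours among the $2d$ matched vertices. Counting these adjacencies forces every matching edge $xy$ to have all its $U$-neighbours on a single endpoint.

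\emph{Step 3 (structure).} Combining the degree bounds with the clique $U$ should pin down a rigid structure. Let $X$ be the set of matched endpoints adjacent to $U$. Then $U\cup(\text{the partners of }X)$ is essentially a clique in $G$ and is completely joined in $G$ to very little else. From here I would derive that one of three things holds:
\begin{enumerate}
\item $G$ is disconnected;
\item some vertex has $G$-degree exceeding $k$, contradicting $\Delta(G)=k$;
\item $G$ is $K_n$, or $G$ is $K_{\frac{n}{2},\frac{n}{2}}$ with $\frac{n}{2}$ odd.
\end{enumerate}
In the last case the complement is $2K_{n/2}$, which has no perfect matching precisely when $\frac{n}{2}$ is odd. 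All three outcomes are excluded by the hypotheses, so $\nu(\overline{G})\ge n-k$.

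The main obstacle is Step 3: turning the tight case of the matching bound into exactly the two exceptional graphs. For this I would first try the Tutte--Berge formula. We have $\nu(\overline{G})=n-k-1$ exactly when there is a set $S$ whose removal leaves $|S|+2k-n+2$ odd components in $\overline{G}$. Combining the minimum-degree bound on $\overline{G}$ with the requirement that $G$ be connected with maximum degree exactly $k$ should force $S=\emptyset$ and $\overline{G}$ to consist of two odd cliques (giving $K_{\frac{n}{2},\frac{n}{2}}$), or $\overline{G}$ to be edgeless (giving $K_n$). I expect this to need a careful case split on $|S|$.
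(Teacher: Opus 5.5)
\textbf{There is a genuine gap.} The paper gives no proof of this lemma; it imports it from Chen, Lih and Wu. Your argument therefore has to stand on its own.

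Your reduction is correct: an equitable $k$-colouring is the same as a matching of size $n-k$ in $\overline{G}$. Step~1, $\nu(\overline{G})\ge d=n-1-k$, is also correct.

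\textbf{Where it fails: Step~2 with $|U|=2$.} This is the case $k=n/2$, which is exactly where the $K_{\frac{n}{2},\frac{n}{2}}$ exception lives. For a single pair $u,u'$, the no-augmenting-path count only tells you that each matching edge $xy$ sends exactly two $\overline{G}$-edges to $\{u,u'\}$, with no crossing pair ($ux$ with $u'y$, or $uy$ with $u'x$). This still allows $u\sim x$ and $u\sim y$ while $u'$ sees neither endpoint.

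That configuration occurs in a tight case. For $\overline{G}=2K_{n/2}$ with $n/2$ odd, the unmatched vertex of each clique sees both ends of every matching edge in its clique. So ``all $U$-neighbours on a single endpoint'' does not follow. If it did, Step~3 could only end in ``disconnected or $K_n$''. You would then have shown that $K_{\frac{n}{2},\frac{n}{2}}$ with $\frac{n}{2}$ odd is equitably $\frac{n}{2}$-colourable, which is false.

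For $|U|\ge 3$ the claim does hold. If $u$ saw both $x$ and $y$, two other vertices of $U$ would send no edge to $xy$, contradicting the count for that pair. Beyond this, Step~3 is only asserted (``should pin down''), so the decisive tight case is never actually handled.

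\textbf{How to close it.} The Tutte--Berge route you mention at the end works, with no delicate case split.
\begin{itemize}
\item If $\nu(\overline{G})=d$, there is a set $S$ with $s=|S|$ and $\mathrm{odd}(\overline{G}-S)=s+t$, where $t=n-2d=2k-n+2\ge 2$.
\item Every vertex of an odd component $C$ has its at least $d$ neighbours in $C\cup S$, so $|C|\ge \max\{1,d-s+1\}$.
\item The bound $|C|\ge 1$ gives $n\ge 2s+t$, i.e.\ $s\le d$. Then $n\ge f(s):=s+(s+t)(d-s+1)$.
\item $f$ is strictly concave, with $f(d)=n$ and $f(0)=n+d(t-2)$. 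Hence $0<s<d$ is impossible, and $s=0$ forces $d=0$ or $t=2$. In every surviving case equality holds throughout.
\end{itemize}
The surviving cases are then the three exclusions.
\begin{itemize}
\item If $d=0$ (so $s=0$), $\overline{G}$ is edgeless and $G=K_n$.
\item If $s=0$, $t=2$ and $d\ge 1$, then $\overline{G}$ is exactly two odd components of order $d+1=\frac{n}{2}$ with minimum degree $\frac{n}{2}-1$. So $\overline{G}=2K_{n/2}$ and $G=K_{\frac{n}{2},\frac{n}{2}}$ with $\frac{n}{2}$ odd.
\item If $s=d\ge 1$, every component of $\overline{G}-S$ is a single vertex joined in $\overline{G}$ to all of $S$. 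So $G$ has no edge between $S$ and $V\setminus S$ and is disconnected.
\end{itemize}
The last case has $S\neq\emptyset$, so the count does not ``force $S=\emptyset$''; connectivity is what rules it out. The outcome ``some vertex has $G$-degree exceeding $k$'' is never needed.
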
 

In contrast to edge coloring, a vertex coloring is an assignment of colors to vertices instead of edges. An equitable $k$-vertex coloring of a graph $G(V,E)$ is an assignment of colors to $V$ such that no pair of adjacent vertices are assigned the same color, and the number of vertices in any two color classes differ by at most one. \citet{schmand2022greedy} show that Lemma~\ref{lemma:chen} implies that a ($\frac{n}{2}-1$)-regular graph $G$ contains a perfect matching if and only if either $\frac{n}{2}$ is even or $\frac{n}{2}$ is odd and $G$ is different from $K_{\frac{n}{2},\frac{n}{2}}$. Here, we show that Lemma~\ref{lemma:chen} implies the following stronger result:

\begin{lemma}\label{lemma:matching}
    Let $G$ be a graph with minimum and maximum degree $\frac{n}{2}-1$ and $\frac{n}{2}$, respectively, such that there is at least one vertex with degree $\frac{n}{2}$. Then, $G$ contains a perfect matching
\end{lemma}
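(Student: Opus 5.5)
The plan is to pass to the complement and apply Lemma~\ref{lemma:chen}, in the same spirit as the argument of \citet{schmand2022greedy}. Let $H$ be the complement of $G$ on the same $n$ vertices ($n$ even). A vertex of degree $d$ in $G$ has degree $n-1-d$ in $H$. So every vertex of $H$ has degree $\frac{n}{2}-1$ or $\frac{n}{2}$, and both values occur:
\begin{itemize}
\item some vertex has degree $\frac{n}{2}-1$ in $G$, since this is the minimum degree, and so has degree $\frac{n}{2}$ in $H$;
\item by hypothesis some vertex has degree $\frac{n}{2}$ in $G$, and so has degree $\frac{n}{2}-1$ in $H$.
\end{itemize}
Hence $H$ has maximum degree exactly $k=\frac{n}{2}$, which satisfies $k \geq \frac{n}{2}$.

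Suppose Lemma~\ref{lemma:chen} applies to $H$, giving an equitable $\frac{n}{2}$-vertex coloring. Then there are $\frac{n}{2}$ color classes on $n$ vertices whose sizes differ by at most one, so every class has exactly two vertices. Each class is independent in $H$, so its two vertices are joined by an edge of $G$. The $\frac{n}{2}$ classes therefore form a perfect matching of $G$. (The trivial case $n=2$ can be handled separately: there $G$ is a single edge.)

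It remains to verify the hypotheses of Lemma~\ref{lemma:chen} for $H$.
\begin{itemize}
\item $H \neq K_n$, because its maximum degree $\frac{n}{2}$ is smaller than $n-1$ for $n\geq 4$.
\item $H \neq K_{\frac{n}{2},\frac{n}{2}}$, because $K_{\frac{n}{2},\frac{n}{2}}$ is $\frac{n}{2}$-regular while $H$ has a vertex of degree $\frac{n}{2}-1$. This is exactly where the assumption that $G$ has a vertex of degree $\frac{n}{2}$ is used, and it is what strengthens the result of \citet{schmand2022greedy}.
\end{itemize}

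The step I expect to need the most care is connectivity of $H$, which Lemma~\ref{lemma:chen} requires. Every vertex of $H$ has degree at least $\frac{n}{2}-1$, so every component of $H$ has at least $\frac{n}{2}$ vertices. If $H$ were disconnected, it would therefore consist of exactly two components of exactly $\frac{n}{2}$ vertices each. In a component with $\frac{n}{2}$ vertices every degree is at most $\frac{n}{2}-1$, so both components would be copies of $K_{\frac{n}{2}}$ and all degrees of $H$ would equal $\frac{n}{2}-1$. This contradicts the existence of a vertex of degree $\frac{n}{2}$ in $H$. Hence $H$ is connected, Lemma~\ref{lemma:chen} applies, and the perfect matching follows as above.
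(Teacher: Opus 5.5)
Your proof is correct and follows the same route as the paper: pass to the complement, apply Lemma~\ref{lemma:chen} to get an equitable $\frac{n}{2}$-vertex coloring, and read off the size-two color classes as a perfect matching of $G$. You are also more careful than the paper, which invokes Lemma~\ref{lemma:chen} without checking that $\overline{G}$ is connected and differs from $K_n$ and $K_{\frac{n}{2},\frac{n}{2}}$; your checks of these hypotheses, especially the connectivity argument using the vertex of degree $\frac{n}{2}$ in $\overline{G}$, fill that gap.
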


\begin{proof}
    Consider the complement $\overline{G}$ of $G$, which is also a graph with minimum degree $\frac{n}{2}-1$ and maximum degree $\frac{n}{2}$. Hence, by Lemma~\ref{lemma:chen}, $\overline{G}$ can be equitably vertex colored with $\frac{n}{2}$ colors. This means that, for every one of these colors, there is exactly one pair of non-adjacent vertices in $\overline{G}$ (but adjacent in $G$) colored with this color. Hence, the subgraph formed with all the edges between pairs of vertices with the same color is a perfect matching of $G$.
\end{proof}

We are now ready to prove the following:

\begin{figure}[h!]
\centering
\caption{Example to illustrate the concept of freeing trapped edges, with round $l+1 = \text{R}2$, and $R^{\star}_{l+1} = \{\text{R}3,\text{R}4\}$. For illustrative purposes, all edges are unoriented. In practice, PR moves are used to restore balance. We further remark that in this example, $r=5 > 4 = \frac{n}{2}$.}
\label{fig:iPRS_con}
\begin{subfigure}{0.3\textwidth}
\centering
\begin{tikzpicture}
    \foreach \i/\name\label in {0/3, 1/2, 2/1, 3/8, 4/7, 5/6, 6/5, 7/4} {
        \node[default] (\name) at (\i*360/8:2cm) {\name};
    }
\draw[edge2, very thick] (3) -- (2);
\draw[edge1blurred] (3) -- (1);
\draw[fictive_edge, dashed, opacity=0.3] (3) -- (8);
\draw[edge3blurred] (3) -- (7);
\draw[fictive_edge, dashed, opacity=0.3] (3) -- (6);
\draw[edge5blurred] (3) -- (5);
\draw[edge4, very thick] (3) -- (4);

\draw[edge4, very thick] (2) -- (1); 
\draw[edge5blurred] (2) -- (8);
\draw[edge1blurred]  (2) -- (7);
\draw[fictive_edge, dashed, opacity=0.3] (2) -- (6);
\draw[edge3blurred] (2) -- (5);
\draw[fictive_edge, dashed, opacity=0.3] (2) -- (4);

\draw[edge3, very thick] (1) -- (8);
\draw[fictive_edge, dashed ,opacity=0.3] (1) -- (7);
\draw[edge5blurred] (1) -- (6);
\draw[fictive_edge, dashed ,opacity=0.3] (1) -- (5);
\draw[edge2blurred] (1) -- (4);

\draw[edge4, very thick] (8) -- (7);
\draw[edge1blurred] (8) -- (6);
\draw[edge2blurred] (8) -- (5);
\draw[fictive_edge, dashed,opacity=0.3] (8) -- (4);

\draw[edge2blurred] (7) -- (6);
\draw[fictive_edge, very thick, dashed] (7) -- (5);
\draw[edge5blurred] (7) -- (4);

\draw[edge4, very thick] (6) -- (5);
\draw[edge3, very thick] (6) -- (4);

\draw[edge1blurred] (5) -- (4);

\end{tikzpicture}
\caption{Initial and target coloring in R2 (purple). The edges $\{1,8\}, \{2,3\}$ and $\{4,6\}$ need to be colored purple but are trapped in other colors.}
\label{fig:iPRS_con1}
\end{subfigure}
\hspace{2pt}
\begin{subfigure}{0.3\textwidth}
\centering
\begin{tikzpicture}
    \foreach \i/\name\label in {0/3, 1/2, 2/1, 3/8, 4/7, 5/6, 6/5, 7/4} {
        \node[default] (\name) at (\i*360/8:2cm) {\name};
    }
\draw[edge2, very thick] (3) -- (2) node[midway, text=black]{\ding{56}};
\draw[edge1blurred] (3) -- (1);
\draw[fictive_edge, dashed, very thick] (3) -- (8);
\draw[edge3blurred] (3) -- (7);
\draw[fictive_edge, dashed, very thick] (3) -- (6);
\draw[edge5blurred] (3) -- (5);
\draw[edge4blurred] (3) -- (4);

\draw[edge4blurred] (2) -- (1); 
\draw[edge5blurred] (2) -- (8);
\draw[edge1blurred]  (2) -- (7);
\draw[fictive_edge, dashed, very thick] (2) -- (6);
\draw[edge3blurred] (2) -- (5);
\draw[fictive_edge, dashed, very thick] (2) -- (4);

\draw[edge3blurred] (1) -- (8);
\draw[fictive_edge, dashed, very thick] (1) -- (7);
\draw[edge5blurred] (1) -- (6);
\draw[fictive_edge, dashed, very thick] (1) -- (5);
\draw[edge2, very thick] (1) -- (4);

\draw[edge4blurred] (8) -- (7);
\draw[edge1blurred] (8) -- (6);
\draw[edge2, very thick] (8) -- (5);
\draw[fictive_edge, dashed, very thick] (8) -- (4);

\draw[edge2, very thick] (7) -- (6);
\draw[fictive_edge, very thick, dashed] (7) -- (5);
\draw[edge5blurred] (7) -- (4);

\draw[edge4blurred] (6) -- (5);
\draw[edge3blurred] (6) -- (4);

\draw[edge1blurred] (5) -- (4);

\end{tikzpicture}
\caption{Start by freeing the edge $\{2,3\}$ in R3 (blue).\phantom{a}\\\phantom{a}\\\phantom{a}}
\label{fig:iPRS_con2}
\end{subfigure}
\hspace{2pt}
\begin{subfigure}{0.3\textwidth}
\centering
\begin{tikzpicture}
    \foreach \i/\name\label in {0/3, 1/2, 2/1, 3/8, 4/7, 5/6, 6/5, 7/4} {
        \node[default] (\name) at (\i*360/8:2cm) {\name};
    }
\draw[fictive_edge, dashed, very thick] (3) -- (2) node[midway, text=black] {\ding{56}};
\draw[edge1blurred] (3) -- (1);
\draw[edge2, very thick] (3) -- (8);
\draw[edge3blurred] (3) -- (7);
\draw[fictive_edge, dashed, very thick] (3) -- (6);
\draw[edge5blurred] (3) -- (5);
\draw[edge4blurred] (3) -- (4);

\draw[edge4blurred] (2) -- (1); 
\draw[edge5blurred] (2) -- (8);
\draw[edge1blurred]  (2) -- (7);
\draw[fictive_edge, dashed, very thick] (2) -- (6);
\draw[edge3blurred] (2) -- (5);
\draw[edge2, very thick] (2) -- (4);

\draw[edge3blurred] (1) -- (8);
\draw[fictive_edge, dashed, very thick] (1) -- (7);
\draw[edge5blurred] (1) -- (6);
\draw[edge2, very thick] (1) -- (5);
\draw[fictive_edge, dashed, very thick] (1) -- (4);

\draw[edge4blurred] (8) -- (7);
\draw[edge1blurred] (8) -- (6);
\draw[fictive_edge, dashed, very thick] (8) -- (5);
\draw[fictive_edge, dashed, very thick] (8) -- (4);

\draw[edge2, very thick] (7) -- (6);
\draw[fictive_edge, very thick, dashed] (7) -- (5);
\draw[edge5blurred] (7) -- (4);

\draw[edge4blurred] (6) -- (5);
\draw[edge3blurred] (6) -- (4);

\draw[->,edge1blurred] (5) -- (4);

\end{tikzpicture}
\caption{New matching in R3. The edge $\{2,3\}$ is now uncolored and is thus “free" to be chosen by an iPRS move in round R2.}
\label{fig:iPRS_con3}
\end{subfigure}
\hspace{2pt}
\begin{subfigure}{0.3\textwidth}
\centering
\begin{tikzpicture}
    \foreach \i/\name\label in {0/3, 1/2, 2/1, 3/8, 4/7, 5/6, 6/5, 7/4} {
        \node[default] (\name) at (\i*360/8:2cm) {\name};
    }
\draw[fictive_edge, dashed, very thick] (3) -- (2) node[midway, text=black] {\ding{56}};
\draw[edge1blurred] (3) -- (1);
\draw[edge2blurred] (3) -- (8);
\draw[edge3, very thick] (3) -- (7);
\draw[fictive_edge, dashed, very thick] (3) -- (6);
\draw[edge5blurred] (3) -- (5);
\draw[edge4blurred] (3) -- (4);

\draw[edge4blurred] (2) -- (1); 
\draw[edge5blurred] (2) -- (8);
\draw[edge1blurred]  (2) -- (7);
\draw[fictive_edge, dashed, very thick] (2) -- (6);
\draw[edge3, very thick] (2) -- (5);
\draw[edge2blurred] (2) -- (4);

\draw[edge3, very thick] (1) -- (8) node[midway, text=black] {\ding{56}};
\draw[fictive_edge, dashed, very thick] (1) -- (7);
\draw[edge5blurred] (1) -- (6);
\draw[edge2blurred] (1) -- (5);
\draw[fictive_edge, dashed, very thick] (1) -- (4);

\draw[edge4blurred] (8) -- (7);
\draw[edge1blurred] (8) -- (6);
\draw[fictive_edge, dashed, very thick] (8) -- (5);
\draw[fictive_edge, dashed, very thick] (8) -- (4);

\draw[edge2blurred] (7) -- (6);
\draw[fictive_edge, very thick, dashed] (7) -- (5);
\draw[edge5blurred] (7) -- (4);

\draw[edge4blurred] (6) -- (5);
\draw[edge3, very thick] (6) -- (4) node[midway, text=black] {\ding{56}};

\draw[edge1blurred] (5) -- (4);

\end{tikzpicture}
\caption{Free the remaining trapped edges in R4 (green). We also forbid the previously trapped edge $\{2,3\}$.\\\phantom{a}\\\phantom{a}}
\label{fig:iPRS_con4}
\end{subfigure}
\hspace{2pt}
\begin{subfigure}{0.3\textwidth}
\centering
\begin{tikzpicture}
    \foreach \i/\name\label in {0/3, 1/2, 2/1, 3/8, 4/7, 5/6, 6/5, 7/4} {
        \node[default] (\name) at (\i*360/8:2cm) {\name};
    }
\draw[fictive_edge, dashed, very thick] (3) -- (2) node[midway, text=black] {\ding{56}};
\draw[edge1blurred] (3) -- (1);
\draw[edge2blurred] (3) -- (8);
\draw[fictive_edge, dashed, very thick] (3) -- (7);
\draw[edge3, very thick] (3) -- (6);
\draw[edge5blurred] (3) -- (5);
\draw[edge4blurred] (3) -- (4);

\draw[edge4blurred] (2) -- (1); 
\draw[edge5blurred] (2) -- (8);
\draw[edge1blurred]  (2) -- (7);
\draw[fictive_edge, dashed, very thick] (2) -- (6);
\draw[edge3, very thick] (2) -- (5);
\draw[edge2blurred] (2) -- (4);

\draw[fictive_edge, dashed, very thick] (1) -- (8) node[midway, text=black] {\ding{56}};
\draw[edge3, very thick] (1) -- (7);
\draw[edge5blurred] (1) -- (6);
\draw[edge2blurred] (1) -- (5);
\draw[fictive_edge, dashed, very thick] (1) -- (4);

\draw[edge4blurred] (8) -- (7);
\draw[edge1blurred] (8) -- (6);
\draw[fictive_edge, dashed, very thick] (8) -- (5);
\draw[edge3, very thick] (8) -- (4);

\draw[edge2blurred] (7) -- (6);
\draw[fictive_edge, very thick, dashed] (7) -- (5);
\draw[edge5blurred] (7) -- (4);

\draw[edge4blurred] (6) -- (5);
\draw[fictive_edge, dashed, very thick] (6) -- (4) node[midway, text=black] {\ding{56}};

\draw[edge1blurred] (5) -- (4);

\end{tikzpicture}
\caption{New matching in R4. The edges $\{1,8\}$ and $\{4,6\}$ are now uncolored and are thus “free" to be chosen by an iPRS move in round R2.\\\phantom{a}\\\phantom{a}}
\label{fig:iPRS_con5}
\end{subfigure}
\hspace{2pt}
\begin{subfigure}{0.3\textwidth}
\centering
\begin{tikzpicture}
    \foreach \i/\name\label in {0/3, 1/2, 2/1, 3/8, 4/7, 5/6, 6/5, 7/4} {
        \node[default] (\name) at (\i*360/8:2cm) {\name};
    }
\draw[fictive_edge, dashed, very thick] (3) -- (2);
\draw[edge1blurred] (3) -- (1);
\draw[edge2blurred] (3) -- (8);
\draw[fictive_edge, dashed, opacity=0.3] (3) -- (7);
\draw[edge3blurred] (3) -- (6);
\draw[edge5blurred] (3) -- (5);
\draw[edge4, very thick] (3) -- (4);

\draw[edge4, very thick] (2) -- (1); 
\draw[edge5blurred] (2) -- (8);
\draw[edge1blurred]  (2) -- (7);
\draw[fictive_edge, dashed, opacity=0.3] (2) -- (6);
\draw[edge3blurred] (2) -- (5);
\draw[edge2blurred] (2) -- (4);

\draw[fictive_edge, dashed, very thick] (1) -- (8);
\draw[edge3blurred] (1) -- (7);
\draw[edge5blurred] (1) -- (6);
\draw[edge2blurred] (1) -- (5);
\draw[fictive_edge, dashed, opacity=0.3] (1) -- (4);

\draw[edge4, very thick] (8) -- (7);
\draw[edge1blurred] (8) -- (6);
\draw[fictive_edge, dashed, opacity=0.3] (8) -- (5);
\draw[edge3blurred] (8) -- (4);

\draw[edge2blurred] (7) -- (6);
\draw[fictive_edge, very thick, dashed] (7) -- (5);
\draw[edge5blurred] (7) -- (4);

\draw[edge4, very thick] (6) -- (5);
\draw[fictive_edge, dashed, very thick] (6) -- (4);

\draw[edge1blurred] (5) -- (4);

\end{tikzpicture}
\caption{All trapped edges were freed. The edges $\{1,8\},\{1,2\},\{2,3\},\{3,4\},\{4,6\}$,\\$\{5,6\},\{5,7\},\{7,8\}$ now form an R2-alternating cycle, which can be used in an iPRS move.}
\label{fig:iPRS_con6}
\end{subfigure}
\end{figure}

\begin{theorem}\label{thm:ConnectivityMatching}
  iPRS-U is color-wise connected for iRR if $r \leq \frac{n}{2}$.
\end{theorem}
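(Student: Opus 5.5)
We transform a source coloring $c$ into a target coloring $c'$ one round at a time. Orientations are ignored throughout, because every iPRS-U move ends with path reversals that restore balance (\Cref{thm:PR}).

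Order the rounds $1,\dots,r$. Suppose rounds $1,\dots,l$ already coincide with $c'$, and consider round $l+1$. Let $M'_{l+1}$ be the target matching of round $l+1$. If every edge of $M'_{l+1}$ is currently uncolored or colored $l+1$, then $M'_{l+1}$ is a perfect matching of $G^u_{l+1}$. In that case Lemma~\ref{lemma:iPRS-BM} gives a sequence of iPRS-U moves, all in round $l+1$, that makes round $l+1$ equal to $M'_{l+1}$. Rounds $1,\dots,l$ are untouched by these moves. The edges of $M'_{l+1}$ cannot be colored with any of $1,\dots,l$, since those rounds already agree with $c'$ and $c'$ is proper. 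So the only obstruction is \emph{trapped} edges: edges of $M'_{l+1}$ that currently carry a color in $R^\star_{l+1}\subseteq\{l+2,\dots,r\}$.

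\textbf{Freeing trapped edges (main step).} For each $q\in R^\star_{l+1}$ in turn, we re-match round $q$ by iPRS-U moves in round $q$ alone, so that round $q$ avoids all forbidden edges. The forbidden edges are those of $M'_1,\dots,M'_{l+1}$; these are exactly the edges trapped in $q$ and any previously freed edges, which must stay uncolored. By Lemma~\ref{lemma:iPRS-BM}, it suffices to exhibit a perfect matching of $G^u_q$ minus the forbidden edges.

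At a vertex $v$, $G^u_q$ contains the edges colored $q$ or uncolored, so $\deg_{G^u_q}(v)=(n-1)-(r-1)=n-r$. Removing forbidden edges deletes at most $l+1$ of them. The edges of rounds $1,\dots,l$ are already colored, so they are not in $G^u_q$ at all. Hence at most one edge at $v$ is removed: the one edge of $M'_{l+1}$ at $v$. The resulting graph $H$ therefore has minimum degree at least $n-r-1\ge \frac n2-1$, using $r\le \frac n2$.

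To apply Lemma~\ref{lemma:matching} we need degrees in $\{\frac n2-1,\frac n2\}$ with at least one vertex of degree $\frac n2$. If $r<\frac n2$, or some degree is larger, we instead pass to a spanning subgraph with these degree properties. Alternatively, we use Dirac-type reasoning: minimum degree at least $\frac n2$ gives a Hamiltonian cycle and hence a perfect matching. If $H$ has minimum degree $\frac n2$, Dirac's theorem applies directly. The delicate case is $r=\frac n2$, where the minimum degree is $\frac n2-1$. Here we use that a vertex whose $M'_{l+1}$-edge is not currently in $G^u_q$ keeps degree $\frac n2$. Vertices of degree $\frac n2-1$ are endpoints of edges trapped in $q$, and since $q$ does trap an edge, at least one vertex keeps degree $\frac n2$ unless every edge of $M'_{l+1}$ is trapped in $q$.

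This is the step I expect to be the main obstacle: guaranteeing a vertex of degree $\frac n2$, i.e.\ ruling out $K_{\frac n2,\frac n2}$-type configurations. If all of $M'_{l+1}$ lies in round $q$, we simply swap rounds $q$ and $l+1$ first. A round swap is realized by iPRS-U moves, since the symmetric difference of the two matchings decomposes into alternating cycles relative to the uncolored edges after temporary recoloring. If that realization fails, we first apply one auxiliary iPRS-U move in round $q$ so that some vertex gains degree.

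After round $q$ is re-matched, the trapped edges become uncolored, and previously freed edges stay uncolored because they were forbidden. Once all $q\in R^\star_{l+1}$ are processed, $M'_{l+1}\subseteq G^u_{l+1}$ and we apply Lemma~\ref{lemma:iPRS-BM}. Induction on $l$ then finishes the proof. Every intermediate coloring is proper, and it is balanced after the path reversals included in each iPRS-U move.
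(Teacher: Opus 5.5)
Your overall scheme matches the paper's. You fix rounds one at a time, free the trapped edges of $M'_{l+1}$ by re-matching each trapping round $q$ inside $G^u_q$ minus the forbidden edges, and count degrees to get minimum degree $n-r-1\ge \frac n2-1$. Forbidding every edge of $M'_{l+1}$, not only the trapped ones, is if anything cleaner than the paper's choice.

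The gap is the one case that actually needs an idea: $r=\frac n2$ with no vertex of degree $\frac n2$ in $H$. First, you misidentify when it occurs. With your forbidden set, $v$ has degree $\frac n2-1$ in $H$ exactly when its $M'_{l+1}$-edge is currently colored $q$ \emph{or uncolored}, and uncolored includes edges freed at an earlier step. So $H$ becomes $(\frac n2-1)$-regular, for example, whenever $M'_{l+1}$ is split between two trapping colors and you process the second one. This is not limited to the case where all of $M'_{l+1}$ lies in round $q$. In the regular case a perfect matching still exists unless $H$ is two disjoint copies of $K_{n/2}$ with $\frac n2$ odd. This follows from the Schmand et al.\ result quoted in the paper, or equivalently from Lemma~\ref{lemma:chen} applied to the $\frac n2$-regular complement. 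So that configuration is the only true obstruction, and your argument never isolates it.

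Second, neither of your remedies works. A round swap of $q$ and $l+1$ is not a sequence of iPRS-U moves. A move in round $s$ only trades $s$-colored edges for uncolored ones, whereas the symmetric difference of rounds $q$ and $l+1$ consists of $q,(l+1)$-bichromatic cycles. A ``temporary recoloring'' is also not an admissible intermediate state. An auxiliary iPRS-U move in round $q$ cannot help at all either. Such a move only permutes which edges of $G^u_q$ carry color $q$, so the edge set of $G^u_q$, the forbidden set, and hence every degree in $H$ stay the same.

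The escape must use a different round, and the paper uses round $l+1$. When $H$ consists of two odd cliques $V_1,V_2$, all within-clique pairs lie in $G^u_q$, so every edge colored $l+1$ joins $V_1$ to $V_2$. One can then pick $(l+1)$-edges $\{u,v\},\{w,x\}$ with $u,w\in V_1$, $v,x\in V_2$ and $\{u,w\},\{v,x\}$ uncolored. This is possible because round $q$ has at most one edge at each vertex inside a clique. Swapping this $(l+1)$-alternating $4$-cycle replaces $\{u,w\},\{v,x\}$ in $G^u_q$ by the non-forbidden crossing edges $\{u,v\},\{w,x\}$. Then $H$ is no longer a union of two cliques and has a perfect matching, so round $q$ can be re-matched and the induction continues.
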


\begin{proof}
  Let $G(V,E,c)$ and $G'(V,E,d)$ be two graphs that are colored by the mappings $c$ and $d$, respectively. We define $E^{g}_k = \{e \in E: g(e) = k\}$, i.e.\ $E^{g}_k$ is the set of edges that are colored $k$ under coloring $g \in \{c,d\}$. Assume that the one-factors corresponding to colors in the set $R' = \{1,\dots,l\}$ with $l \leq r-1$ of $G$ and $G'$ are identical, i.e. $E_{s}^{c} = E_{s}^{d}$ for every $s \in R'$. We now show how to use iPRS-U to make the one-factors corresponding to round $l+1$ in $G$ and $G'$ identical. 
  
  First, suppose that all edges with color $l+1$ in $G'$ are either uncolored or have color $l+1$ in $G$. This means that $G_{l+1}^{u}$ contains the perfect matching identical to round $l+1$ in $G'$. By Lemma~\ref{lemma:iPRS-BM}, this matching can be found by a sequence of iPRS-U moves. For example, if we consider round R2 (purple edges) in \Cref{fig:BipartiteMatching_iRR1}, and we want to transform this matching to the matching depicted in \Cref{fig:BipartiteMatching_iRR3}, then a single iPRS-U move suffices to transform the matching in \Cref{fig:BipartiteMatching_iRR1} into the matching in \Cref{fig:BipartiteMatching_iRR3}. Then, we can increment $l$ by one, and consider the next color.
  
 However, consider now \Cref{fig:iPRS_con}. Here, we want to transform the matching in round R2 given by the edges $\{1,2\}, \{3,4\}, \{5,6\}, \{7,8\}$ into the matching consisting of the edges $\{1,8\}, \{2,3\}, \{4,6\}, \{5,7\}$, but the edge $\{2,3\}$ is “trapped" in R3 and the edges $\{1,8\}$ and $\{4,6\}$ are “trapped" in R4. We say these edges are trapped because these edges can only be used in an iPRS-U move in case they are uncolored or colored $l+1$ in $G$. For this purpose, define $R^{\star}_{l+1} = \{e \in E: c(e) \notin \{l+1,-1\} \ \land \ e \in E_{l+1}^d\}$. In other words, $R^{\star}_{l+1}$ is the set of trapped edges, i.e. the set of edges in $E$ that are colored with a color different from $l+1$ in $G$ but should be colored $l+1$ in $G'$. The proof will exploit Lemma~\ref{lemma:matching} to iteratively go over the rounds in $R^{\star}_{l+1}$ until all trapped edges are either uncolored or colored $l+1$ in $G$, such that the desired matching can be found by an iPRS move. In order to do this, we assume that we have already freed trapped edges from colors $k \in R^{\star}_{l+1}: k < q$. Define $G^{u\star}_{q}$ to be the graph equal to $G^u_{q}$ without edges trapped colored $m \in R_{l+1}^{\star}$ with $m \leq q$. Recall that we assumed that $r \leq \frac{n}{2}$. Then, the degrees of vertices in $G^{u\star}_{q}$, are as follows.

  First, if the edge incident to $v$ with color $q$ is a trapped edge, then this edge is excluded from $G^{u\star}_{q}$, so $v$ has a degree of at least $\frac{n}{2}-1$ in $G^{u\star}_{q}$. Secondly, if the edge incident to $v$ colored $q$ is not a trapped edge but $v$ was incident to a trapped edge in some color previously considered in $R^{\star}$, then $v$ has a degree of at least $\frac{n}{2}-1-1+1 = \frac{n}{2}-1$ in $G^{u\star}_{q}$. Indeed, since the edges colored with $l+1$ form a perfect matching, each node is incident to at most one trapped edge. Finally, if $v$ was not incident to a trapped edge from a color previously considered in $R^{\star}$, and it is not incident to a trapped edge with color $q$, then it has a degree of at least $\frac{n}{2}$ in $G^{u\star}_{q}$. Given a graph $G^{u\star}_{q}$, we now consider two cases:

  \emph{Case 1: $G^{u\star}_{q}$ contains at least one vertex with degree at least $\frac{n}{2}$.} 

  In this case, Lemma \ref{lemma:matching} asserts that a perfect matching in $G^{u\star}_{q}$ exists. Then, use iPRS-U to find this matching (followed by PR to restore the balance) and move to the next color, if any exists. If there exists no color anymore with trapped edges, then all the edges colored $l+1$ in $G'$ are either uncolored or colored $l+1$ in $G$. This procedure is illustrated in \Cref{fig:iPRS_con}.

  \emph{Case 2: $G^{u\star}_{q}$ contains no vertices with degree at least $\frac{n}{2}$.} 

  Since we assumed that $r \leq \frac{n}{2}$, in this case all vertices in $G^{u\star}_{q}$ have degree $\frac{n}{2}-1$. It is known that the only case in which there exists no perfect matching in $G^{u\star}_{q}$ is when $G^{u\star}_{q}$ is equal to $K_{\frac{n}{2}, \frac{n}{2}}$ and $\frac{n}{2}$ is odd \citep{schmand2022greedy}. In this case, observe that the uncolored edges form two odd sized (complete) cliques of size $\frac{n}{2}$. So suppose that this is the case (if not, Lemma~\ref{lemma:iPRS-BM} asserts that we can do a sequence of iPRS-U moves to find the desired matching). Then, we do a sequence of iPRS-U moves involving color $l+1$, with the goal of swapping edges colored $l+1$ and $q$ in $G$. Since $G^{u\star}_{q}$ is equal to $K_{\frac{n}{2}, \frac{n}{2}}$, there must necessarily exist an alternating cycle in $G^{u\star}_{l+1}$ with exactly four edges. Indeed, all the edges colored $l+1$ must connect the two odd cliques of size $\frac{n}{2}$. Let $V_1$ be the vertices in the first clique and $V_2$ be the vertices in the second clique. Then, there must exist two edges $\{u,v\}$ and $\{w,x\}$ colored $l+1$, such that $u,w \in V_1$ and $v,x \in V_2$. Since the cliques are complete, the edges $\{u,w\}$ and $\{v,x\}$ must be uncolored.
  Swapping the colored for the uncolored edges in this cycle results in $G^{u\star}_{q}$, without trapped edges, not being equal anymore to $K_{\frac{n}{2}, \frac{n}{2}}$. Hence, now we can use Lemmas~\ref{lemma:iPRS-BM} and~\ref{lemma:matching} to free the trapped edges of color $q$, and all the edges colored $l+1$ in $G'$ are now either uncolored or colored $l+1$ in $G$. 

To conclude the proof, set $l=1$ and repeat the procedure above until $l=r-1$. In case that $l=r-1$, there can be no trapped edges anymore, so the move iPRS-U can only find the one-factor identical to the one-factor with color $r$ in $G'$.
\end{proof}

\begin{corollary}
  If $r \leq \frac{n}{2}$, iPRS-U+CR is fully connected if $r$ is even and iPRS-U+CR+PR is fully connected if $r$ is odd.
\end{corollary}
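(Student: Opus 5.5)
The plan is to run the two connectivity results one after the other. Let $G(V,E,c)$ with orientation $h$ and $G'(V,E,d)$ with orientation $h'$ be two feasible timetables, so both match graphs are balanced digraphs. We first move from $(c,h)$ to a timetable whose coloring coincides with $d$. Then we move from there to $(d,h')$ using only orientation changes.

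\textbf{Coloring.} Since $r \leq \frac{n}{2}$, Theorem~\ref{thm:ConnectivityMatching} gives a finite sequence of iPRS-U moves that turns $c$ into $d$. Each iPRS-U move ends with path reversals, so by Theorem~\ref{thm:PR} the match graph is balanced after every move. This matches the requirement in the definition of color-wise connectivity. Denote the resulting orientation by $h''$.

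\textbf{Orientation.} Now $E_c=E_d$, and both $h''$ and $h'$ are balanced orientations of the same match graph. Theorem~\ref{thm:ConnectivityCR} gives a sequence of moves transforming $h''$ into $h'$ while staying balanced: cycle reversals if $r$ is even, and cycle plus path reversals if $r$ is odd. CR and PR do not change colors, so the coloring stays equal to $d$ throughout. Combining the two stages reaches $(d,h')$ from $(c,h)$, which gives both color-wise and orientation-wise connectivity. By definition, that is full connectivity.

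\textbf{Main obstacle.} The delicate point is the odd case. The proof of Theorem~\ref{thm:ConnectivityCR} reverses paths in $G(V,D)$, and a single path reversal changes $\Delta$ at its endpoints by $2$. Balance is kept only if the endpoints are chosen with $\Delta_{G(V,D)}(v)=1$ and $\Delta_{G(V,D)}(w)=-1$, i.e.\ $\Delta_{G_c}(v)=1$ and $\Delta_{G_c}(w)=-1$; reversing such a path swaps these values to $-1$ and $+1$, which is still balanced. I would verify this step briefly. I would also note that PR moves are already contained in iPRS-U as its repair step, so listing "+PR" in the odd case is consistent.
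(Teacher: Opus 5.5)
Your proposal is correct and is essentially the paper's own proof: Theorem~\ref{thm:ConnectivityMatching} turns $c$ into $d$, with each iPRS-U move closed by path reversals so balance is kept. Theorem~\ref{thm:ConnectivityCR} then turns the resulting orientation $h''$ into $h'$ by CR (resp.\ CR+PR) without changing any colors, your odd-case check is sound, and the one slip — your aside that PR is ``already contained in iPRS-U'', which fails because every iPRS-U move changes the matching of its round $s$ — is harmless since the statement lists PR explicitly.
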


\begin{proof}
  Given two partially colored balanced graphs $G(V,E,c)$ and $G'(V,E,d)$, with orientations $h$ and $h'$, respectively. By \Cref{thm:ConnectivityMatching}, iPRS-U can be used to transform $G(V,E,c)$ with orientation $h$ into $G'(V,E,d)$ with orientation $h^{''}$, and by \Cref{thm:ConnectivityCR}, CR+PR can be used to transform $h^{''}$ into $h'$, without modifying the colors of the edges.
  
\end{proof}

Similar to iPTS, in case that $r=n-2$, we have the following result:

\begin{theorem}\label{thm:DisconnectivityMatching_n-2}
  iPRS-U is color-wise disconnected when $r=n-2$.
\end{theorem}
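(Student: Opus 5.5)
When $r=n-2$, every vertex meets exactly one uncolored edge, so the uncolored edges form a single one-factor $F'$. Together with the $r$ colored one-factors, $F'$ gives a one-factorization $\mathcal{F}\cup\{F'\}$ of $K_n$. I would mirror the proof of the analogous iPTS result: exhibit a starting coloring whose orbit under iPRS-U moves consists only of isomorphic one-factorizations, and note that non-isomorphic feasible timetables exist, so connectivity fails.

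First I would determine what an iPRS-U move does in this case. An $s$-alternating cycle alternates between edges of color $s$ and uncolored edges. Since each vertex has exactly one incident uncolored edge, such a cycle is a connected component of $F_s\cup F'$. Such a component is an $s,-1$ bichromatic cycle in the one-factorization $\mathcal{F}\cup\{F'\}$ of $K_n$, treating ``uncolored'' as an extra color. Swapping the colored and uncolored edges on that cycle is therefore exactly a PRS move in this complete one-factorization, with the extra color taking part. The subsequent path reversals change only orientations, so they do not matter for color-wise connectivity.

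Second, I would choose the start as a perfect one-factorization $\mathcal{P}$ of $K_n$, which exists, for instance, when $n=8$, as the circle method instance in \Cref{fig:EscapePerfectOneFactorization}. I would designate one of its factors as the uncolored factor $F'$ and the rest as rounds. Every union $F_s\cup F'$ is then a Hamiltonian cycle, so the only alternating cycle available is the whole cycle. The move swaps $F_s$ and $F'$ entirely, which amounts to relabeling which factor is uncolored. This is the same as an RS move extended to the extra color, as in the PRS discussion of \citet{Januario2015}. The resulting set of $r+1$ factors is the same set $\mathcal{P}$, so it is again perfect, and by induction every coloring reachable from the start comes from $\mathcal{P}$ by permuting factor labels. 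In particular, the set of colored edges $E_c$ is always the complement of one factor of $\mathcal{P}$.

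Third, I would conclude by exhibiting a feasible target that is unreachable. Any coloring whose underlying one-factorization of $K_n$ is not perfect will do, for example the RR timetable of \Cref{tab:example1_RR} with one round dropped, or any non-perfect one-factorization, which exist for $n\ge 6$. Because the underlying factor set never changes, such a target can never be reached, so iPRS-U is color-wise disconnected. The main point to state carefully is the identification in the first step: an alternating cycle is forced to be a whole component of $F_s\cup F'$, because $F'$ is a perfect matching. Once that is established, the rest reduces to the known fact that PRS fixes perfect one-factorizations.
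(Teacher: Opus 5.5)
Your proposal is correct and follows essentially the paper's argument: with $r=n-2$ the uncolored edges form a one-factor $F'$, and starting from a perfect one-factorization every $F_s\cup F'$ is a Hamiltonian cycle. Hence each iPRS-U move just exchanges $F_s$ and $F'$, and the factor set never changes. One small correction: for $n=6$ there are no non-perfect one-factorizations, because two disjoint perfect matchings on six vertices always form a $6$-cycle; but since your invariant is the exact factor set $\mathcal{P}$, any coloring built on a different one-factorization of $K_n$ still serves as an unreachable target there.
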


\begin{proof}
  In case $r=n-2$, the uncolored edges form a one-factor $F'$. In case the union of the set of colored one factors and $F'$ is perfect, the union of any pair of one-factors in the set $\{F_1,\dots,F_{n-2},F'\}$ forms a Hamiltonian cycle. Hence, for each one-factor $F_c, c \leq n-2$, iPRS-U either returns $F_c$ or $F'$, thus only leading to other perfect one-factorizations. 
\end{proof}

Whether or not iPRS-U is connected for values of $r \in \{\frac{n}{2}+1,\dots,n-3\}$ is an open question. Since the edges in \Cref{fig:EscapePerfectOneFactorization} form an alternating cycle, this example also shows that iPRS-U is able to escape from perfect one-factorizations in case $r \leq n-3$. 

\section{Experimental results}\label{sec:Experimental_results}

In this section, the goal is to empirically assess the effectiveness of the neighborhood structures. We do this for two recently proposed problems that involve finding an iRR timetable. In particular, we want to investigate the added value of the newly introduced neighborhoods (see Sections~\ref{sec:i-PTS} and \ref{sec:i-PRS}) beyond the neighborhoods known from round robin scheduling. In addition, we compare our results to the best known solutions from the literature.

The two problems and the corresponding sets of instances are described in \Cref{sec:instances}.
In \Cref{sec:framework}, we discuss the metaheuristic framework in which we use our neighborhoods, while in \Cref{sec:setup} we discuss our experimental setup. Finally, \Cref{sec:results} shows the results, followed by a discussion in \Cref{sec:discussion}.

\subsection{Problem descriptions and instances}\label{sec:instances}

We consider two problems: the incomplete Traveling Tournament Problem (iTTP), introduced by \citet{devriesere2026iTTP}, and a problem which we call Youth Sports Timetabling Problem (YSTP), stemming from \citet{li2025beyond} and \citet{devriesere2025redesigning}. Both involve scheduling a single incomplete round robin tournament as described in \Cref{sec:Notation}. The objective in both problems is to minimize the total travel distance. In particular, we are given a matrix $(d_{ij})_{i,j \in T}$ such that $d_{ij}$ is travel distance from the venue of team $i$ to that of team $j$. The difference between the two problems, however, is that in YSTP teams immediately return home after every game, while in iTTP teams travel directly to the venue of their next game. Hence, in iTTP, if team $i$ plays two consecutive away games against $j$ and $k$, then it incurs a cost of $d_{ij} + d_{jk} + d_{ki}$, while in YSTP the cost is $2d_{ij} + 2d_{jk}$. Thus, a sequence of consecutive away games can be seen as a road trip. In addition, each of the two problems contains a number of constraints inherent to the problem. In the following, we describe these constraints, together with the sets of instances.

\subsubsection{iTTP}

In iTTP, the task is to find a timetable such that the travel distance implied by the road trips is minimized. In addition to constraints \textbf{C1}-\textbf{C3} of \Cref{sec:Notation}, teams play at least one and at most three consecutive home games in any four consecutive rounds. \Cref{tab:instancesTTP} shows the considered instances and their characteristics. For more details, we refer to \citet{devriesere2026iTTP}.

\begin{table}[ht!]
  \centering
  \caption{Overview of the iTTP instances and their characteristics \citep{devriesere2026iTTP}}
  \label{tab:instancesTTP}
  \small
    \setlength{\tabcolsep}{5pt}
  \begin{threeparttable}
  \begin{tabular}{lcr p{9cm}}
    \toprule
    Setting & No. teams & No.rounds & Characteristics \\
    \midrule
    NL & 16 & 4,8,12 & National League of Major League Baseball \\
    BRA & 24 & 6,12,18 & Brazilian soccer championship \\
    NFL & 32 & 8,16,24 & National Football League \\
    GAL & 40 & 10,20,30 & Teams are located in a three-dimensional `galaxy' \\
    CIRC & 40 & 10,20,30 & Teams are located on a circle \\
    CON & 40 & 10,20,30 & All distances are 1\\
    LINE & 40 & 10,20,30 & Teams are located on a line\\
    INCR & 40 & 10,20,30 & Teams are located on a line with increasing distances\\
    \bottomrule
  \end{tabular}
  \end{threeparttable}
\end{table}

\subsubsection{YSTP}

In YSTP, each team belongs to exactly one club $c \in C$, and teams of the same club share the same venue. 
We let $\gamma_{c,r}$ denote the venue capacity of club $c$ in round $r$. A venue capacity violation is incurred for each home game scheduled at club $c$ in round $r$ that exceeds the capacity specified by $\gamma_{c,r}$.
For each instance, a parameter $v^+$ is given corresponding to the maximum allowed overall capacity violations. Moreover, for each team a set is given indicating the opponents that that team is eligible to play against (assuming that if team $i$ is eligible to play against team $j$, $j$ is also eligible to play against $i$). Note that in this case, the set of eligible matches does not necessarily correspond to the complete graph $K_n$. However, we may still take $K_n$ as the input graph, and put a high cost on the edges corresponding to ineligible matches. Teams are allowed to play at most two consecutive home and away games. Depending on the instance type, further restrictions are imposed on the home-away assignment of teams (see further).

For YSTP, we consider two sets of instances originating from Belgian youth sports competitions. The first set, refered to as $\mathcal{I}_h$, involves instances of Belgian field hockey competitions, which are scheduled as an iRR tournament since the season 2023--24 \citep{devriesere2025redesigning}. It involves data of two indoor competitions I-23-24 and I-24-25, and four outdoor competitions O-21-22, O-22-23, O-23-24 and O-24-25. Secondly, we consider the instances used in \citet{li2025beyond}. Four of these instances, named U13, U15, U17 and U21, are based on real-life Belgium football competitions, and were first considered in \citet{toffolo2019sport}. Together with two smaller artificial instances, Ti and S, we refer to this set of instances as $\mathcal{I}_f$. Each instance of $\mathcal{I}_f$ appears in threefold: once with the club capacities constant over the rounds and two times with club capacities varying over the rounds in different ways. These club capacity settings are denoted as CONST, NC-1 and NC-2; we refer to \citet{li2025beyond} for further details. 

\begin{table}[ht!]
  \centering
  \caption{Overview of the youth sports instances and their characteristics}
  \label{tab:instances}
  \begin{threeparttable}
  \begin{tabular}{cccccc}
    \toprule
    Instance set & Instance & No. teams & No. clubs & No. rounds \\
    \bottomrule
    $\mathcal{I}_h$ & I-23-24 & 34 & 19 & 6 \\
    $\mathcal{I}_h$ & I-24-25 & 40 & 20 & 6 \\
    $\mathcal{I}_h$ & O-21-22 & 1474 & 90 & 10 \\
    $\mathcal{I}_h$ & O-22-23 & 680 & 87 & 10 \\
    $\mathcal{I}_h$ & O-23-24 & 818 & 88 & 10 \\
    $\mathcal{I}_h$ & O-24-25 & 718 & 90 & 10 \\
    \hdashline
    $\mathcal{I}_f$ & Ti & 16 & 10 & 14 \\
    $\mathcal{I}_f$ & S & 50 & 37 & 18 \\
    $\mathcal{I}_f$ & U21 & 64 & 55 & 14 \\
    $\mathcal{I}_f$ & U17 & 144 & 118 & 14 \\
    $\mathcal{I}_f$ & U13 & 184 & 138 & 14 \\
    $\mathcal{I}_f$ & U15 & 216 & 136 & 14 \\
    \bottomrule 
  \end{tabular}
  \begin{tablenotes}
    Note: each instance in $\mathcal{I}_f$ appears six times, once for each combination of club capacities (CONST, NC-1, NC-2) and maximum number of breaks (3, $\infty$).
  \end{tablenotes}
  \end{threeparttable}
\end{table}

Finally, for instances in $\mathcal{I}_f$, a number of other restrictions on the home-away assignment are given. First, we recall that in the literature a pair of consecutive home or away games is known as a break. Then, for instances in $\mathcal{I}_f$ the following restrictions are given: no team starts or ends with a break \textbf{(D1)}, there are at least $\lfloor \frac{r}{4} \rfloor$ and at most $\lceil \frac{r}{4} \rceil$ home and away games in the first and last $\frac{r}{2}$ rounds \textbf{(D2)}, and every team has at most $b^+$ breaks  \textbf{(D3)}. We consider instances from \citet{li2025beyond} with $b^+ = 3$ and without \textbf{(D3)} (in which case we write $b^+ = \infty$). Table~\ref{tab:instances} gives an overview of the YSTP instances.


\subsection{Metaheuristic framework}\label{sec:framework}

In order to move through the search space and asses the effectiveness of the neighborhoods presented in \Cref{sec:Neighborhoods}, we develop a modified Late Acceptance Hill Climbing (LAHC) algorithm \citep{burke2017late}. In this framework, each move in the neighborhood generates a candidate solution.  When combining several neighborhood structures, they are given equal probability to be selected.

LAHC is an extension of pure Hill Climbing where instead of comparing a candidate solution with the current best solution, it is compared with the best solution $l_h$ iterations before. The original LAHC proposed in \citet{burke2017late} requires setting the value of the parameter $l_h$, which is the only parameter and is called the \emph{history length}. Instances in their paper are solved with values of $l_h$ ranging between 1 and 50,000. Let $z(S)$ be the cost of a timetable $S$. LAHC takes as input an initial solution $S^{init}$, initializes a list $L$ of length $l_h$, with a value $z(S^{init})$ in every position, i.e.\ if we let $L(k)$ be the $k$'th position in the list $L$ for $k =1,\dots,l_h$. Let $S^{incumbent}$ and $S^{best}$ be the current and best found timetable, respectively, both initialized with $S^{init}$. In iteration $i$, a candidate solution $S$ is accepted if and only if 

\begin{equation}
  z(S) < L(i \ (\text{mod}) \ l_h) \ \text{or} \ z(S) \leq z(S^{incumbent}) \label{eq:LACH_acceptance}
\end{equation}

and the algorithm terminates after reaching a certain number of consecutive iterations without accepting a candidate solution. For a detailed overview of this algorithm, we refer to \citet{burke2017late}.

\RestyleAlgo{ruled}
\begin{algorithm}[hbt!]
  \caption{Pseudocode of the proposed ALAHC algorithm}\label{alg:lahc}
\KwData{Initial solution $S^{init}$}
\KwResult{Final solution $S^{best}$}
$S^{best} \gets S^{init}, S^{incumbent} \gets S^{init}$\;
$l_h \gets 1$\;
$L \gets (s^{init})$\;
$\rho \gets 1.005, \rho^{incr} \gets 0.005$\;
$i \gets 0, i^{idle} \gets 0$\;
\While{time limit not hit}
{
    $i \gets i+1$\;
    Construct candidate solution $S$\;

    \eIf{$z(S) \geq z(S^{incumbent})$}
    {
        $i^{idle} \gets i^{idle}+1$\;
    }{
        $i^{idle} \gets 0$\;
    }
    \If{$z(S) < L(i \ (\text{mod}) \ l_h) \   \text{or} \ z(S) < z(S^{incumbent})$}
    {
        $S^{incumbent} = S$\;
    }
    \If{$z(S) < L(i \ (\text{mod}) \ l_h)$}
    {
        $L(i \ (\text{mod}) \ l_h) = z(S)$\;
    }

    \If{$i^{idle} > 100,000$ and $l_h \times 1.5 < 100,000$}
    {
        $l_h \gets l_h \times 1.5$\;
        $i \gets 0, i^{idle} \gets 0$\;
        $S^{incumbent} \gets S^{best}$\;
        $\rho \gets \rho+\rho^{incr}$\;
        $L \gets (v_1,v_2,\dots,v_{l_h})$ where $v_i \in [z(S^{best}),z(S^{best})\times \rho]$;\
    }

    \If{$z(S^{incumbent}) < z(S^{best})$ or $l_h \times 1.5 \geq 100,000$}
    {
        $l_h \gets 10$\;
        $i \gets 0, i^{idle} \gets 0$\;
        $S^{incumbent} \gets S^{best}$\;
        $\rho \gets 1.005$\;
        $L \gets (v_1,v_2,\dots,v_{l_h})$ where $v_i \in [z(S^{best}),z(S^{best})\times \rho]$;\
    }
}
\end{algorithm}

The speed of convergence of the LAHC algorithm proposed in \citet{burke2017late} depends entirely on $l_h$. Here, we step away from this dependency and update $l_h$ dynamically as suggested by \citet{tercero2026multi}, which results in an algorithm which we call Adaptive Late Acceptance Hill Climbing (ALAHC). First, we set $l_h = 1$, which corresponds to pure Hill Climbing. We keep the acceptance criterion as in Equation \eqref{eq:LACH_acceptance} and, after hitting 100,000 consecutive non-improving iterations, we increase the value of $l_h$ with 50\% (rounded up) and reinitialize $L$ with values sampled from the interval $[S^{best}, S^{best}\times \rho]$, with $\rho$ a perturbation value. The variable $\rho$ is initialized with 1.005 and increased after every convergence with 0.005. In case a new best solution is found, or in case the history length happens to grow to a length of at least 100,000, the history length is decreased to 10 and $\rho$ is set back to its initial value of 1.005. This dynamic adjustment serves a dual purpose: it actively guides the algorithm to improve upon a strong solution while preventing the algorithm from drifting afar from promising regions. The algorithm stops after hitting a predetermined time limit, which in this paper is set to two hours. The advantage of this approach is that the algorithm becomes time-based; it fully utilizes the allotted time by preventing premature convergence. The pseudocode of this algorithm can be found in Algorithm~\ref{alg:lahc}. Parameter values were tuned manually. We only accept solutions that are feasible, i.e.\ solutions that satisfy all hard constraints as described in \Cref{sec:instances}.

ALAHC assumes that a starting solution is given. For instances of iTTP, we build one using Vizing's edge coloring algorithm as described in \citet{devriesere2026iTTP}. Since this approach does not guarantee a feasible solution for instances of YSTP, we find initial solutions by using what we call the Greedy Matching algorithm (GM) described in \citet{li2025beyond}, with exactly the same parameter settings. Both approaches are extremely fast.


With respect to implementing the neighborhoods, we always use DFS to find cycles but vary between DFS and BFS (Breadth First Search) when finding paths. BFS also runs in $\mathcal{O}(|V|+|E|)$ time, but in contrast to DFS, it always returns a path that is shortest in terms of number of edges. An advantage of BFS is therefore that it reverses fewer orientations compared to DFS, and may therefore be less disruptive. Conversely, DFS explores a wider range of timetables and may therefore be better suited to escape from local optima. Therefore, in case a path needs to be found, BFS is used with a 90\% probability and DFS is used with a 10\% probability. Preliminary results confirmed that using a higher weight for BFS is justified. 

\subsection{Experimental setup}\label{sec:setup}

We first discuss how the neighborhood structures are combined into various configurations of the metaheuristic framework.
Recall that our algorithm for finding incomplete lanterns (see \Cref{subsec:lanterns}) may also return colorful chordless. Therefore, in this work PTS $\subset$ iPTS. By definition, iPTS-CR $\subset$ iPTS, and iPRS-B $\subset$ iPRS-U. Therefore, in the configuration where we combine all neighborhood structures, we sample moves from the neighborhood structures TS, iPTS, PRS, iPRS-U and CR. PTS and iPTS-CR moves are included in iPTS, and arbitrary iPRS-B moves are included in iPRS-U. We also test five other configurations. First, we include only the neighborhoods TS, PRS and CR. We call this the Base configuration, because these are the neighborhoods that already exist in the literature; it allows us to measure the added value of the proposed neighborhoods. Next, we test the performance of using the single neighborhood structures iPTS, iPTS-CR and iPRS-U, together with the combination of moves CR and iPRS-B. We combine CR with iPRS-B since iPRS-B does not modify the home-away patterns of the teams. This way, the effectiveness of the individual neighborhoods can be assessed. For each configuration, we run the algorithm with the same arbitrary set of 10 different seeds. 

For iTTP instances, we benchmark the ALAHC algorithm with the currently best known solutions from the literature, which can be found in \citet{devriesere2026iTTP}.
For YSTP instances, we benchmark the ALAHC algorithm with solving a standard integer program, which can be found in \Cref{app:IP}, as well as with the algorithm GM proposed in \citet{li2025beyond}. 

All algorithms were programmed in C++ and compiled using GCC 13.2.0 with the optimization flag -O3. All experiments were run on a GNU/Linux based system with an AMD EPYC 7532 32-Core Processor running at 3.3GHz, provided with 8 threads and 64GB of RAM. In every experiment, a time limit of 2 hours was used. The integer program was solved with GUROBI 12.0. 



\subsection{Results}\label{sec:results}

In this section, we analyze the performance of the ALAHC algorithm with the proposed neighborhoods. The solutions are compared with the best known lower bound. For iTTP instances, the derivation of these lower bounds is discussed in \citet{devriesere2026iTTP}. For instances in $\mathcal{I}_f$, the lower bound is the best lower bound found by GUROBI when solving the integer program in \Cref{app:IP} with a time limit of 2 hours. For instances in $\mathcal{I}_h$, a tighter lower bound is obtained by summing the minimum travel distance in each competition separately, ignoring the capacity restrictions. 

\begin{table}[h!]
    \centering
    \setlength{\tabcolsep}{2pt} 
    \caption{Results iTTP instances}
    \label{tab:Results_TTP}
    \begin{threeparttable}
    \begin{tabularx}{\textwidth}{l @{\extracolsep{\fill}} r @{\extracolsep{\fill}} r @{\extracolsep{\fill}} rrrrrr}
        \toprule
         & & & \multicolumn{6}{c}{ALAHC} \\ 
         \cmidrule(lr){4-9}
        Instance & LB & Best-known & Base & iPTS & iPTS-CR & iPRS-U & CR+iPRS-B & All \\
        \midrule
NL16-4 & 25076 & 25076 & \textit{30801} & \textbf{25076} & \textbf{25076} & \textbf{25076} & 25254 & \textbf{25076} \\
NL16-8 & 57263 & 62125 & \textit{65868} & 59551 & 59728 & 60766 & 61486 & \textbf{59483} \\
NL16-12 & 92580 & 110037 & 106440 & 105025 & 100668 & \textit{107162} & 105253 & \textbf{100144} \\
BRA24-6 & 39127 & 41004 & \textit{55216} & \textbf{40718} & \textbf{40718} & 41074 & 40954 & \textbf{40718} \\
BRA24-12 & 98815 & 120615 & \textit{126366} & 106259 & \textbf{105577} & 109473 & 108588 & 106077 \\
BRA24-18 & 167657 & 212852 & \textit{204445} & 188155 & 183896 & 190340 & 187659 & \textbf{183310} \\
NFL32-8 & 70158 & 83771 & \textit{109172} & 75199 & 75662 & 79484 & 78631 & \textbf{74902} \\
NFL32-16 & 175051 & 215826 & \textit{241812} & \textbf{195824} & 197071 & 204706 & 200706 & 197375 \\
NFL32-24 & 297305 & 396264 & \textit{385818} & 343846 & 345519 & 349378 & 347685 & \textbf{341253} \\
CIRC40-10 & 560 & 814 & \textit{1494} & 714 & \textbf{688} & 794 & 768 & 696 \\
CIRC40-20 & 1760 & 2770 & \textit{3518} & 2330 & 2272 & 2424 & 2370 & \textbf{2238} \\
CIRC40-30 & 3620 & 5640 & \textit{5724} & 4686 & 4664 & 4786 & 4732 & \textbf{4622} \\
CON40-10 & 280 & 280 & \textit{287} & \textbf{280} & \textbf{280} & \textbf{280} & \textbf{280} & \textbf{280} \\
CON40-20 & 560 & 560 & \textit{569} & \textbf{560} & \textbf{560} & 561 & 561 & 561 \\
CON40-30 & 810 & 812 & \textit{840} & 833 & 833 & \textbf{832} & 835 & 834 \\
GAL40-10 & 23794 & 28201 & \textit{32323} & \textbf{25228} & 25518 & 26258 & 26296 & 25538 \\
GAL40-20 & 51992 & 63135 & \textit{68729} & \textbf{56757} & 57551 & 59163 & 58310 & 57507 \\
GAL40-30 & 81752 & 112270 & \textit{106530} & \textbf{93781} & 95007 & 95757 & 94324 & 94089 \\
INCR40-10 & 13492 & 17796 & \textit{36056} & 15056 & 15424 & 16874 & 16332 & \textbf{15050} \\
INCR40-20 & 47222 & 67078 & \textit{85696} & 55816 & \textbf{55616} & 60904 & 59078 & 56010 \\
INCR40-30 & 102336 & 142404 & \textit{140008} & \textbf{115622} & 117714 & 121626 & 118582 & 115912 \\
LINE40-10 & 678 & 884 & \textit{1870} & 788 & \textbf{760} & 868 & 828 & \textbf{760} \\
LINE40-20 & 2364 & 3370 & \textit{4366} & 2844 & 2802 & 3024 & 2936 & \textbf{2788} \\
LINE40-30 & 5120 & 6990 & \textit{7176} & 5852 & 5888 & 6110 & 5932 & \textbf{5800} \\
        \bottomrule
    \end{tabularx}
    \begin{tablenotes}
\item For every instance, the best found upper bound is marked in bold, while the worst found upper bound is italicized. 
\end{tablenotes}
\end{threeparttable}
\end{table}

\Cref{tab:Results_TTP} shows, for each iTTP instance, the best known lower bound (LB), the best known solution from the literature (Best-known), together with the best found solution over 10 random seeds of the ALAHC algorithm when a) using only TS, PRS and CR (Base), b) using only iPTS, c) using only iPTS-CR, d) using only iPRS-U, e) using both CR and iPRS-B and f) using TS, PRS, CR, iPTS and iPTS-U (All). As can be seen from the table, we are able to either match or find new best solutions for every iTTP instance. In particular, the configurations iPTS, iPTS-CR and All find better solutions in 29 out of the 30 instances, while iPRS-U and CR+iPRS-B find better solutions in 27 out of the 30 instances. The best known solution is found by iPTS in 9 cases, by iPTS-CR in 8 cases, by iPRS-U in 3 cases, only once by CR+iPRS-B, and finally in 14 cases by All. The distribution of the gaps of the solutions compared to the lower bound, for each instance and all 10 seeds, is shown in \Cref{fig:boxplots_ittp}. From this figure, we can see that the gaps are smallest in the configurations iPTS, iPTS-CR and All, and highest in the Base configuration. Moreover, on average the biggest improvement is found by All, where the best found solution is more than 22\% better than the best found solution by Base, while this is the lowest for iPRS-U, where on average the best found solution is “only" 19\% better.

\begin{figure}[h!]
     \centering
     \caption{Gaps iTTP instances}
     \label{fig:boxplots_ittp}
     \includegraphics[width=0.75\textwidth]{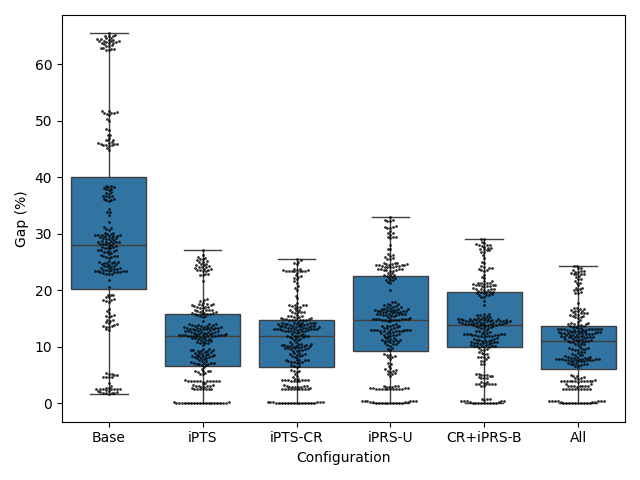}
 \end{figure}

\begin{table}[h!]
\centering
    \setlength{\tabcolsep}{2pt} 
    \caption{Results YSTP ($\mathcal{I}_f$) instances (football)}
    \label{tab:Results_football}
    \begin{threeparttable}
    \begin{tabularx}{\textwidth}{l @{\extracolsep{\fill}} r @{\extracolsep{\fill}} rr @{\extracolsep{\fill}} rrrrrr}
        \toprule
         & & & & \multicolumn{6}{c}{ALAHC} \\ 
         \cmidrule(lr){5-10}
        Instance & LB & IP & GM & Base & iPTS & iPTS-CR & iPRS-U & CR+iPRS-B & All \\
        \midrule
S-CONST-b3 & 4891 & \textit{4989} & 4964 & 4964 & 4911 & \textbf{4898} & 4942 & 4938 & 4922 \\
S-CONST & 4890 & \textbf{4891} & \textit{4938} & 4937 & 4905 & 4896 & 4934 & 4931 & 4922 \\
S-NC1-b3 & 4891 & 4937 & \textit{5003} & \textit{5003} & 4917 & \textbf{4902} & 4959 & 4949 & 4936 \\
S-NC1 & 4890 & \textbf{4891} & \textit{4976} & \textit{4976} & 4913 & 4900 & 4946 & 4946 & 4927 \\
S-NC2-b3 & 6340 & \textbf{6340} & \textit{6394} & 6392 & 6342 & 6341 & 6393 & 6386 & 6345 \\
S-NC2 & 6340 & \textbf{6340} & \textit{6356} & \textit{6356} & 6341 & \textbf{6340} & 6353 & 6353 & 6343 \\
U13-CONST-b3 & 4835 & \textit{19535} & 4978 & 4978 & 4894 & \textbf{4890} & 4976 & 4976 & 4973 \\
U13-CONST & 4835 & \textit{19053} & 4942 & 4942 & 4888 & \textbf{4880} & 4941 & 4942 & 4939 \\
U13-NC1-b3 & 4835 & \textit{8789} & 5017 & 5017 & 4919 & \textbf{4906} & 5016 & 5017 & 5016 \\
U13-NC1 & 4835 & \textit{14928} & 4996 & 4996 & 4907 & \textbf{4895} & 4996 & 4996 & 4994 \\
U13-NC2-b3 & 5757 & \textit{7404} & 6303 & 6303 & 5852 & \textbf{5836} & 6203 & 6301 & 5932 \\
U13-NC2 & 5757 & \textit{5894} & 5845 & 5845 & 5811 & \textbf{5803} & 5845 & 5845 & 5842 \\
U15-CONST-b3 & 3186 & -1 & \textit{3241} & \textit{3241} & 3206 & \textbf{3203} & 3239 & 3238 & 3238 \\
U15-CONST & 3186 & \textit{15221} & 3222 & 3222 & \textbf{3199} & 3203 & 3221 & 3221 & 3220 \\
U15-NC1-b3 & 3186 & -1 & \textit{3252} & \textit{3252} & \textbf{3204} & 3206 & 3250 & 3249 & 3250 \\
U15-NC1 & 3186 & \textit{12169} & 3230 & 3230 & \textbf{3204} & \textbf{3204} & 3229 & 3229 & 3229 \\
U15-NC2-b3 & 3876 & -1 & \textit{4367} & \textit{4367} & \textbf{3916} & \textbf{3916} & 4142 & 4366 & 3994 \\
U15-NC2 & 3876 & \textit{4916} & 3943 & 3943 & \textbf{3898} & \textbf{3898} & 3941 & 3943 & 3927 \\
U17-CONST-b3 & 4322 & \textit{13901} & 4431 & 4431 & 4371 & \textbf{4367} & 4428 & 4427 & 4427 \\
U17-CONST & 4322 & \textit{6685} & 4412 & 4412 & 4365 & \textbf{4360} & 4409 & 4408 & 4407 \\
U17-NC1-b3 & 4322 & \textit{7320} & 4494 & 4494 & 4401 & \textbf{4386} & 4493 & 4492 & 4465 \\
U17-NC1 & 4322 & \textit{8098} & 4480 & 4480 & 4392 & \textbf{4374} & 4477 & 4477 & 4454 \\
U17-NC2-b3 & 5677 & 5744 & \textit{5787} & \textit{5787} & 5714 & \textbf{5712} & 5786 & 5786 & 5751 \\
U17-NC2 & 5678 & \textbf{5678} & \textit{5734} & \textit{5734} & 5715 & 5707 & 5733 & 5733 & 5733 \\
U21-CONST-b3 & 3205 & 3259 & \textit{3351} & \textit{3351} & 3222 & \textbf{3216} & 3250 & 3248 & 3236 \\
U21-CONST & 3205 & \textbf{3205} & \textit{3262} & 3261 & 3220 & 3214 & 3251 & 3242 & 3234 \\
U21-NC1-b3 & 3205 & 3258 & \textit{3339} & \textit{3339} & 3233 & \textbf{3223} & 3275 & 3275 & 3252 \\
U21-NC1 & 3205 & \textbf{3205} & \textit{3323} & \textit{3323} & 3222 & 3219 & 3262 & 3263 & 3245 \\
U21-NC2-b3 & 4304 & \textbf{4304} & \textit{4457} & \textit{4457} & 4340 & 4334 & 4432 & 4455 & 4357 \\
U21-NC2 & 4304 & \textbf{4304} & \textit{4323} & \textit{4323} & 4309 & 4306 & 4322 & 4322 & 4315 \\
 \bottomrule
    \end{tabularx}
    \begin{tablenotes}
\item For every instance, the best found upper bound is marked in bold, while the worst found upper bound is italicized. For instances U15-CONST-b3, U15-NC1-b3 and U15-NC2-b3, no initial solution by IP was found, and hence the lower bounds for these instances are set equal to the lower bounds of the corresponding instances with no break limit.
\end{tablenotes}
\end{threeparttable}
\end{table}

A similar analysis is performed for instances in the set $\mathcal{I}_f$, whose results can be found in \Cref{tab:Results_football}. The integer program finds a proven optimal solution in 7 out of the 30 instances, all of which are instances from S and U21. This is expected, as these instances involve a relatively low number of teams compared to the number of rounds. However, for larger instances, the solutions found by IP are very poor. In 3 instances, IP even fails to find a feasible solution. In 27 out of the 30 instances in $\mathcal{I}_f$, the Base configuration is not able to find any improvement over the initial solution. This is expected, as the initial solutions found by GM are already very close to the lower bounds, and hence it is unlikely that better solutions may be found by only considering the same isomorphic match graph. Moreover, for YSTP instances, the objective function only changes when introducing new games, and hence only TS has an effect on the objective function value in the Base configuration. In contrast, the other configurations are often able to find better solutions. Notably, the configurations with only iPTS or iPTS-CR unequivocally find better solutions than the configuration that combines all neighborhoods. In particular, All never achieves the best known solution, while iPTS finds the best known solution for 5 instances, and iPTS-CR does so for 20 out of the 30 instances, including one which is proven optimal. This is also shown in \Cref{fig:boxplots_football}, where iPTS-CR is observed to achieve the lowest gaps. 
Moreover, iPTS-CR achieves the largest average improvement: its best-found solutions are 1.95\% better than those of Base. In contrast, the smallest improvement is observed for CR+iPRS-B, where the best-found solutions are on average only 0.5\% better.

\begin{figure}[h!]
     \centering
     \caption{Gaps football instances}
     \label{fig:boxplots_football}
     \includegraphics[width=0.75\textwidth]{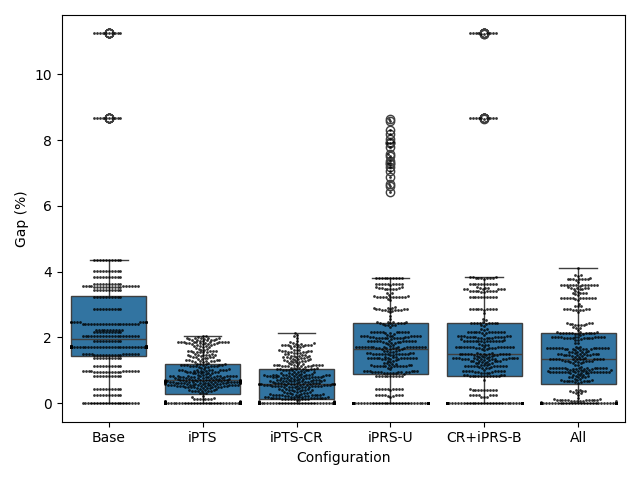}
 \end{figure}

\begin{table}[h!]
\centering
    \setlength{\tabcolsep}{2pt} 
    \caption{Results YSTP ($\mathcal{I}_h$) instances (field hockey)}
    \label{tab:Results_hockey}
    \begin{threeparttable}
    \begin{tabularx}{\textwidth}{l @{\extracolsep{\fill}} r @{\extracolsep{\fill}} rr @{\extracolsep{\fill}} rrrrrr}
        \toprule
         & & & & \multicolumn{6}{c}{ALAHC} \\ 
         \cmidrule(lr){5-10}
        Instance & LB & IP & GM & Base & iPTS & iPTS-CR & iPRS-U & CR+iPRS-B & All \\
        \midrule
I-23-24 & 2821 & \textbf{2821} & \textit{2903} & 2895 & \textbf{2821} & \textbf{2821} & 2825 & \textbf{2821} & \textbf{2821} \\
I-24-25 & 3980 & \textbf{3980} & \textit{4025} & 4021 & \textbf{3980} & \textbf{3980} & 3985 & 3983 & \textbf{3980} \\
O-21-22 & 150381 & \textit{289065} & 158058 & 158053 & 153803 & \textbf{153303} & 157437 & 155969 & 155283 \\
O-22-23 & 180310 & \textit{339237} & 190861 & 190857 & 182659 & \textbf{182091} & 188990 & 185311 & 183691 \\
O-23-24 & 220520 & \textit{391507} & 231584 & 231565 & 223464 & \textbf{222955} & 229963 & 226313 & 225091 \\
O-24-25 & 175592 & \textit{301517} & 187216 & 187052 & 178075 & \textbf{177489} & 184092 & 180558 & 179213 \\
 \bottomrule
    \end{tabularx}
    \begin{tablenotes}
\item For every instance, the best found upper bound is marked in bold, while the worst found upper bound is italicized. 
\end{tablenotes}
\end{threeparttable}
\end{table}

Finally, \Cref{tab:Results_hockey} reveals a similar conclusion for instances of the set $\mathcal{I}_h$. Here, IP, iPTS,iPTS-CR and All are able to find the optimal solutions for the (small) instances I-23-34 and I-24-25. For the other four instances, all configurations improve the solution found by GM, with the biggest improvements found by iPTS-CR, which finds the best known solution for all instances in $\mathcal{I}_h$. This is also shown in \Cref{fig:boxplots_hockey}. Moreover, on average the biggest improvement is found by iPTS-CR, where the best found solution is 3.33\% better than the best found solution by Base, while this is the lowest for iPRS-U, where on average the best found solution is only 1.16\% better.

\begin{figure}[h!]
     \centering
     \caption{Gaps hockey instances}
     \label{fig:boxplots_hockey}
     \includegraphics[width=0.75\textwidth]{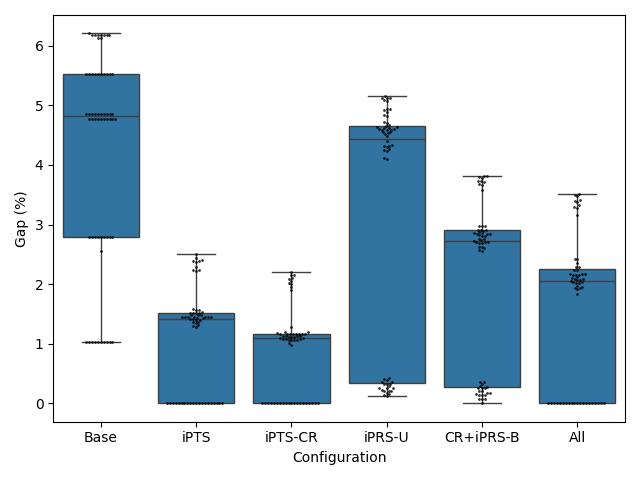}
 \end{figure}


\begin{table}[h]
\centering
\caption{Pairwise Nemenyi Test $p$-values for different instance sets}
\label{tab:Nemenyi}
\setlength{\tabcolsep}{2pt} 
\begin{threeparttable}
\scriptsize
\resizebox{\textwidth}{!}{
    \begin{tabular}{l  ccc  ccc  ccc  ccc  ccc}
    \toprule
    & \multicolumn{3}{c}{iPTS} & \multicolumn{3}{c}{iPTS-CR} & \multicolumn{3}{c}{iPRS-U} & \multicolumn{3}{c}{CR+iPRS-B} & \multicolumn{3}{c}{All} \\

    \midrule
    
    Configuration & \textit{iTTP} & $\mathcal{I}_f$ & $\mathcal{I}_h$ & \textit{iTTP} & $\mathcal{I}_f$ & $\mathcal{I}_h$ & \textit{iTTP} & $\mathcal{I}_f$ & $\mathcal{I}_h$ & \textit{iTTP} & $\mathcal{I}_f$ & $\mathcal{I}_h$ & \textit{iTTP} & $\mathcal{I}_f$ & $\mathcal{I}_h$ \\ 

    \cmidrule{1-1}
    \cmidrule(lr){2-4}
     \cmidrule(lr){5-7}
      \cmidrule(lr){8-10}
       \cmidrule(lr){11-13} 
        \cmidrule(lr){14-16}
    
    iPTS-CR   & $<.01$ & $.67$  & $>.05$ & \multicolumn{3}{c}{---} & \multicolumn{3}{c}{---} & \multicolumn{3}{c}{---} & \multicolumn{3}{c}{---} \\
    iPRS-U    & $<.01$ & $<.01$ & $<.01$ & $<.01$ & $<.01$ & $<.01$ & \multicolumn{3}{c}{---} & \multicolumn{3}{c}{---} & \multicolumn{3}{c}{---} \\
    CR+iPRS-B & $.09$  & $<.01$ & $>.05$ & $.06$  & $<.01$ & $<.01$ & $.92$  & $\ge.20$ & $.47$  & \multicolumn{3}{c}{---} & \multicolumn{3}{c}{---} \\
    All       & $.88$  & $<.01$ & $.84$  & $<.01$ & $<.01$ & $.29$  & $<.01$ & $\ge.20$ & $>.05$ & $<.01$ & $\ge.20$ & $.46$  & \multicolumn{3}{c}{---} \\
    Base      & $<.01$ & $<.01$ & $<.01$ & $<.01$ & $<.01$ & $<.01$ & $<.01$ & $<.01$ & $.25$  & $<.01$ & $<.01$ & $<.01$ & $<.01$ & $<.01$ & $<.01$ \\ 

    \bottomrule
    
    \end{tabular}
}
\end{threeparttable} 
\end{table}

We also test whether the solution quality significantly differs between the different configurations. As a first step, we perform a Friedman's test for each instance set. For all three instance sets, the null hypothesis that the configurations do not significantly differ from each other is rejected ($p < 0.01$). \Cref{tab:Nemenyi} shows that all the configurations with the proposed neighborhoods (except for iPRS-U for the instance set $\mathcal{I}_h$) yield significant improvements compared to Base. Moreover, this table confirms the superiority of iPTS-CR, as it either shows no significant difference with iPTS and All, or finds significantly better solution than all other configurations.

\subsection{Discussion}\label{sec:discussion}

The experiments in this section revealed that the configurations iPTS, iPTS-CR and All perform particularly well for TTP instances, while iPTS-CR dominates YSTP instances. The superiority of iPTS-CR for YSTP instances may have several reasons. First, in contrast to iTTP instances, YSTP instances impose many constraints on the home-away patterns of teams. In our framework, we only accept candidate solutions if they are feasible, suggesting that neighborhood structures with a minimum impact on the home-away patterns may fare well for YSTP instances. By complementing iPTS with CR within the lantern, we do exactly this, which may explain the improved performance of iPTS-CR over iPTS. 

Perhaps surprisingly, despite the fact that iPRS-U is color-wise connected in case that $r \leq \frac{n}{2}$, this configuration yielded, next to Base, the highest gaps. In contrast to iPTS-CR, iPRS-U may have a very disruptive effect on the home-away patterns of the teams, destroying effective road trips in the current solution of iTTP instances and violating hard constraints in the current solution of YSTP instances. Therefore, iPRS-U may be better fit as a perturbation move rather than an improvement neighborhood.
Moreover, when measuring the CPU time of the neighborhoods in the configuration All, iPRS-U consumes over 40$\%$ of the total CPU time in the configuration All. Hence, including this neighborhood is expensive, and may lead to significantly fewer iterations when using a time based stopping criterion. Instead of finding any alternating cycle, it may this be desirable to find minimum length alternating cycles, such that the acceptance rates of iPRS-U moves is increased.

At the same time, iPTS, IPTS-CR and All are seen to perform better than CR+iPRS-B, which suggests that simultaneously modifying the home-away pattern and opponents of teams in one single neighborhood might be more effective than having two separate neighborhoods for this.

\section{Conclusion}\label{sec:Conclusion}

In this work, we defined the iRR timetabling problem as a graph problem. We discussed the behavior of existing neighborhood structures and established that they are not connected for incomplete round robin tournaments.  We introduced two novel neighborhood structures, which we call incomplete Partial Team Swap and incomplete Partial Round Swap. It is shown that incomplete Partial Round Swap fully connects the solution space if the number of rounds is at most half the number of teams, which in practice is often the case. The effectiveness of the neighborhoods is validated on three sets of instances from the literature. Statistical tests confirm the added value of the proposed neighborhood structures. In particular, for each of the considered sets of instances, configurations with novel neighborhoods perform almost always significantly better than the configuration with existing neighborhoods only. Moreover, embedding the novel neighborhood structures in the ALAHC algorithm results in new best solution for all iTTP instances. Notably, for YSTP instances, using only incomplete Partial Team Swap produced the best results, suggesting that this neighborhood structure is able to effectively move to non-isomorphic timetables. Future work can focus on how to efficiently combine the various neighborhood structures, and investigate whether a more sophisticated framework, which for example allows neighborhoods to explore the infeasible solution space, can lead to further improvements. The solution space connectivity of incomplete Partial Team Swap, as well as incomplete Partial Round Swap in case $r > \frac{n}{2}$, also remain topics for future research.


\section*{Appendix}

\appendix

\section{Integer programming formulation YSTP}\label{app:IP}

Here, we present the integer programs used to compute timetables that minimize capacity violations and travel distance, for instances in the sets $\mathcal{I}_f$ and $\mathcal{I}_h$. Let $T_c$ be the set of teams belonging to club $c$. We use the variable $x_{ijr}$ which is 1 if team $i \in T$ plays against an eligible opponent $j \in O_i$ in round $r \in R$, and 0 otherwise. The variable $v_{cr}$ denotes the venue capacity violations of club $c \in C$ in round $r \in R$.

\begin{align}
  \text{min} \ \ & \sum_{i \in T}\sum_{j \in O_i}\sum_{r \in R}d_{ij}x_{ijr} & \label{ip:obj_travel}\\
  & \sum_{r \in R}(x_{ijr}+x_{jir}) \leq 1 & \forall i \in T, \ \forall j \in O_i \label{ip:c1}\\
  & \sum_{j \in O_i}(x_{ijr}+x_{jir}) = 1 & \forall i \in T, \ \forall r \in R \label{ip:c2}\\
  & \sum_{j \in O_i}\sum_{r \in R}x_{ijr} = \frac{|R|}{2} & \forall i \in T \label{ip:c3} \\
  & \sum_{j \in O_i}(x_{ijr-2}+x_{ijr-1}+x_{ijr}) & \forall i \in T, \ \forall r=3,\dots,|R| \label{ip:c4} \\
  & \sum_{j \in O_i}(x_{jir-2}+x_{jir-1}+x_{jir}) & \forall i \in T, \ \forall r=3,\dots,|R| \label{ip:c5} \\ 
  & \sum_{i \in T_c}\sum_{j \in O_i}x_{ijr} \leq \gamma_{cr} + v_{cr} & \forall c \in C, \ \forall r \in R \label{ip:c6} \\
  & \sum_{c \in C}\sum_{r \in R}v_{cr} \leq v^{+} \label{ip:c14}\\ 
  & x_{ijr} \in \{0,1\} & \forall i \in T, \ \forall j \in O_i, \ \forall r \in R \label{ip:x} \\
  & v_{cr} \in \mathbb{R} & \forall c \in C, \ \forall r \in R \label{ip:v}
\end{align}

The objective \eqref{ip:obj_travel} minimizes the total travel distance. Constraints \eqref{ip:c1} state that two teams face each other at most once, while constraints \eqref{ip:c2} state that, in each round, each team plays exactly one game against one of its eligible opponent. Constraints \eqref{ip:c3} guarantee that each team plays exactly $\frac{|R|}{2}$ home games (in all instances $|R|$ is even). Next, constraints \eqref{ip:c4} and \eqref{ip:c5} forbid a team from playing more than two consecutive home games and away games, respectively. Constraints \eqref{ip:c6} are the capacity constraints, with constraint \eqref{ip:c14} bounding the total capacity violation to $v^{+}$. Finally, expressions \eqref{ip:x}-\eqref{ip:v} give the variable domains.

In instances in $\mathcal{I}_f$, constraints \eqref{ip:c1}-\eqref{ip:v} are augmented with the following constraints:

\begin{align}
  & \sum_{j \in O_i}(x_{ijr} + x_{ijr+1}) \leq 1 & \forall i \in T, \ r \in \{1,|R|-1\} \label{ip:c7} \\
  & \sum_{j \in O_i}(x_{jir} + x_{jir+1}) \leq 1 & \forall i \in T, \ r \in \{1,|R|-1\} \label{ip:c8} \\
  & \sum_{j \in O_i}\sum_{r = s}^{s+\frac{|R|}{2}}x_{ijr} \geq \Bigl\lfloor \frac{|R|}{4} \Bigr\rfloor & \forall i \in T, \ \forall s \in \Bigl\{1,\frac{|R|}{2}\Bigr\} \label{ip:c9} \\
  & \sum_{j \in O_i}\sum_{r = s}^{s+\frac{|R|}{2}}x_{ijr} \leq \Bigl\lceil \frac{|R|}{4} \Bigr\rceil & \forall i \in T, \ \forall s \in \Bigl\{1,\frac{|R|}{2}\Bigr\} \label{ip:c10} \\
  & \sum_{j \in O_i}(x_{ijr-1}+x_{ijr}) \leq b_{ir} & \forall i \in T, \ \forall r=2,\dots,|R| \label{ip:c11} \\
  & \sum_{j \in O_i}(x_{jir-1}+x_{jir}) \leq b_{ir} & \forall i \in T, \ \forall r=2,\dots,|R| \label{ip:c12} \\
  & \sum_{r=2}^{|R|}b_{ir} \leq b^{+} & \forall i \in T \label{ip:c13} \\
  & b_{ir} \in \{0,1\} & \forall i \in T, \ \forall r \in R \label{ip:b}
\end{align}

Constraints \eqref{ip:c7}-\eqref{ip:c8} state that teams do not start with two consecutive home games and away games, respectively. Constraints \eqref{ip:c9}-\eqref{ip:c10} state that teams should play a balanced number of home and away games in each half. Next, a variable $b_{ir}$ is introduced that is 1 if team $i$ has a break in round $r$. Then, constraints \eqref{ip:c11}-\eqref{ip:c13} limit the number of breaks for each team. The domain of the variable $b_{ir}$ is given in expression \eqref{ip:b}.







\clearpage

\bibliography{bibliography}

\end{document}